\documentclass[11pt]{article}

\usepackage{mathpazo}
\usepackage{amsfonts}
\usepackage{amsmath}
\usepackage{graphicx}
\usepackage{latexsym}
\usepackage{mathabx,multicol}
\usepackage[margin=1.05in]{geometry}
  
\usepackage[T1]{fontenc}
\usepackage{times}
\usepackage{color,graphicx}
\usepackage{array}
\usepackage{enumerate}
\usepackage{amsmath}
\usepackage{amssymb}
\usepackage{amsthm}
\usepackage{pgfplots}
\usepackage{pgf}
\usepackage{tikz}
\usetikzlibrary{patterns}
\usepgfplotslibrary{patchplots} 
\usetikzlibrary{pgfplots.patchplots} 
\pgfplotsset{width=9cm,compat=1.5.1}

\usepackage{amsthm}

\newtheorem{definition}{Definition}
\newtheorem{lemma}{Lemma}
\newtheorem{theorem}{Theorem}
\newtheorem{corollary}{Corollary}

\newtheorem{proposition}{Proposition}

\newtheorem{example}{Example}

\newcommand{\be}{\mathbf e}
\newcommand{\bv}{\mathbf v}

\newcommand{\bu}{\mathbf u}
\newcommand{\bx}{\mathbf x}
\newcommand{\by}{\mathbf y}
\newcommand{\bz}{\mathbf z}
\newcommand{\bmu}{{\boldsymbol{\mu}}}
\newcommand{\one}{\mathbf 1}
\newcommand{\zero}{\mathbf 0}
\def\ii{\mathrm{i}}
\newcommand\spec{{\mathrm{spec}}}
\newcommand{\supp}{{\Phi}}

\usepackage{xcolor}
\usepackage{makeidx}
\usepackage[colorlinks=true,linkcolor=blue,anchorcolor=blue,citecolor=red,urlcolor=magenta]{hyperref}
\usepackage[alphabetic,backrefs]{amsrefs}

\def\C{\mathbb{C}}
\def\Q{\mathbb{Q}}
\def\R{\mathbb{R}}
\def\Z{\mathbb{Z}}

\begin{document}
\title{Local $\epsilon$-uniform mixing in continuous quantum walks}

\author{
	Hermie Monterde\textsuperscript{\hspace{-0.05cm} \!1}
}
\date{}
\maketitle

\begin{abstract}
Let $X$ be a weighted graph and $M$ be its adjacency, Laplacian or signless Laplacian matrix. In a continuous quantum walk on $X$, local $\epsilon$-uniform mixing occurs at vertex $u$ if the $u$th column of the matrix $U(t)=e^{\ii tM}$ can be made arbitrarily close to a vector whose all entries have equal magnitude. Using the spectral and combinatorial properties of $X$, we derive necessary conditions for local $\epsilon$-uniform mixing to occur in $X$. This includes an inequality involving all entries of each eigenvector of $M$, as well as an upper bound on the degree of vertex $u$ when $M$ is the Laplacian or signless Laplacian matrix. We use these necessary conditions to rule out local $\epsilon$-uniform mixing in numerous classes of graphs, most of which are non-regular. We also show that almost all planar graphs (resp., trees) contain a vertex that does not admit local $\epsilon$-uniform mixing for any assignment of edge weights. Furthermore, we prove if $X$ has $n$ vertices and admits local $\epsilon$-uniform mixing at a vertex contained in a subgraph with a twin, then the number of vertices of this twin subgraph must be at least $\sqrt{n}$. In particular, we establish that a graph on $n\geq 5$ vertices does not admit local $\epsilon$-uniform mixing at a vertex with a twin. 
\end{abstract}

\noindent \textbf{Keywords:} quantum walk, uniform mixing, adjacency matrix, Laplacian matrix, signless Laplacian matrix\\
	
\noindent \textbf{MSC2010 Classification:} 
05C50; 
15A18;  
05C22; 
81P45; 

\addtocounter{footnote}{1}
\footnotetext{Department of Mathematics and Statistics, University of Regina, SK, Canada S4S 0A2, Hermie.Monterde@uregina.ca}

\section{Introduction}\label{secINTRO}

Let $X$ be a weighted graph, and $M$ be its adjacency, Laplacian or signless Laplacian matrix. A \textit{(continuous) quantum walk} on $X$ (relative to $M$) is determined by a one-parameter family of symmetric unitary matrices:
\begin{equation*}
\label{U}
U_M(t)=e^{\ii tM},\quad t\in\R.
\end{equation*}
A complex vector (or a matrix) is \textit{uniform} if all its entries have equal magnitude. We say that \textit{uniform mixing} occurs in $X$ at time $\tau$ if $U_M(\tau)$ is a uniform matrix. A relaxation of uniform mixing is \textit{$\epsilon$-uniform mixing}, which occurs in $X$ if all entries of $U_M(t)$ can be made arbitrarily close to a uniform matrix. We say that
\textit{local $\epsilon$-uniform mixing} occurs at vertex $u$ in $X$ if the $u$th column of $U_M(t)$ can be made arbitrarily close to a uniform vector. Note that the existence of local uniform mixing  implies local $\epsilon$-uniform mixing, and the existence of $\epsilon$-uniform mixing requires each vertex in $X$ to exhibit local $\epsilon$-uniform mixing. 

The first family of graphs known to exhibit uniform mixing is the hypercubes \cite{moore2002quantum}, and the second is the complete graphs $K_q$, where $q\in\{2,3,4\}$ \cite{Ahmadi}. In \cite{best2008mixing}, Best et al.\ proved that Cartesian products preserve uniform mixing at the same time, and concluded that the Hamming graphs $H(n,q)$ have uniform mixing if and only if $q\in\{2,3,4\}$. In \cite{Godsil2013}, Godsil, Mullin, and Roy characterized strongly regular graphs that admit uniform mixing. They also showed that uniform mixing on a bipartite (resp., regular) graph with $n$ vertices, respectively, implies that $n\equiv 0$ (mod 4) (resp., $n$ is sum of two integer squares). Cycles of even and prime lengths were also shown to not admit uniform mixing. In \cite{Godsil2015UniformMO}, Godsil and Zhan discovered families of Cayley graphs over $\Z_q^d$ with $q\in\{2,3,4\}$ that admit uniform mixing. They also provided the first infinite family of non-regular graphs that admit uniform mixing, which consists of Cartesian powers of the star on four vertices. Later, Chan found infinite families of graphs in the Hamming scheme that admit uniform mixing, and characterized uniform mixing in folded $n$-cubes, halved $n$-cubes, and folded halved $n$-cubes \cite{Chan2013}. Most recently, a family of oriented, normal, and nonabelian Cayley graphs is presented in \cite{sin2025uniform}. For $\epsilon$-uniform mixing, no family is known apart from cycles of prime lengths \cite{Godsil2013}. For local uniform mixing, Godsil showed that the apex of a cone on a $d$-regular graph admits this property \cite{SedQW} when $d\in\{0,1,2\}$. From this short survey, we see that only a handful of graphs are known to admit uniform mixing and most of them are regular. Part of the reason for this short list is the absence of characterization of uniform mixing. In this paper, we use the spectral properties of $M$ and the combinatorial properties of $X$ to derive necessary conditions for local $\epsilon$-uniform mixing at vertex $u$ of $X$, which in turn provides necessary conditions for local uniform mixing, $\epsilon$-uniform mixing and uniform mixing. We then use these conditions to rule out local $\epsilon$-uniform mixing in numerous classes of graphs, most of which are non-regular. 

This paper is organized as follows. In Section~\ref{sec:UM}, we give equivalent characterizations of local $\epsilon$-uniform mixing (Proposition~\ref{proplum}), local uniform mixing (Corollary~\ref{charlum}), and $\epsilon$-uniform mixing (Proposition~\ref{Lem:compHad}). Using these characterizations, we derive necessary conditions for local $\epsilon$-uniform mixing in Section~\ref{sec:neccon}, which include an upper bound on the degree of vertex $u$ in terms of the average degree of $X$ when $M$ is the Laplacian or the signless Laplacian matrix of $X$ (Theorem~\ref{Cor:avedeg}), as well as an inequality that involves all entries of each eigenvector of $X$ (Theorem~\ref{lum}). In Section~\ref{sec:tw}, we define true and false twin subgraphs. We prove that 
the existence of small twin subgraphs is forbidden amongst large graphs admitting $\epsilon$-uniform mixing (Corollary~\ref{corintro}). We also show that a graph on $n\geq 5$ vertices does not admit local $\epsilon$-uniform mixing at a vertex with a twin (Corollary~\ref{twin}). Section~\ref{sec:per} is a discussion on the connection of local uniform mixing and periodicity. Section~\ref{sec:bip} is devoted to local $\epsilon$-uniform mixing in bipartite graphs relative to the adjacency matrix. We establish parity requirements on $n$ and $\operatorname{deg}u$ whenever $u$ admits local $\epsilon$-uniform mixing in an unweighted bipartite graph $X$ (Corollary~\ref{nodd}). We also prove that if a singular integer-weighted bipartite graph admits local $\epsilon$-uniform mixing at vertex $u$, then $n$ must be a perfect square; and if, in addition, the nullspace of $A$ contains a $\{-1,0,1\}$ vector with a nonzero $u$th entry, then the size of the part containing $u$ is at least $\sqrt{n}$ and has the same parity as $n$ (Corollary~\ref{cor:singularbip}). Thus, weighted bipartite graphs with $\epsilon$-uniform mixing cannot be `too' unbalanced (Theorem~\ref{thm:eumbip}). We also prove a rarity result for integer-weighted bipartite graphs that admit local uniform mixing (Theorem~\ref{raregenA}). Section~\ref{sec:plan} deals with planar graphs. We show that almost all connected planar graphs (resp., trees) contain a vertex that does not admit local $\epsilon$-uniform mixing for any assignment of edge weights (Theorem~\ref{lumplntr}). We also provide upper bounds on the degree of vertices admitting local $\epsilon$-uniform mixing in various classes of planar graphs. We investigate local $\epsilon$-uniform mixing on trees and unicyclic graphs in Section~\ref{sec:tu} relative to the adjacency matrix. We show that if $X$ is a tree or a bipartite unicyclic graph that is not cycle, then local $\epsilon$-uniform mixing does not occur at a pendent vertex (Corollary~\ref{treeuni}). It also turns out that $K_{1,3}$ is the only unweighted tree with no degree-two vertex that admits $\epsilon$-uniform mixing (Corollary~\ref{k13}), and that the subdivision of any weighted tree does not admit $\epsilon$-uniform mixing (Theorem~\ref{thm:subdtree}). Finally, open questions are presented in Section~\ref{sec:oq}. The remainder of this section is alloted to basic definitions and notation relevant to the paper.

Throughout this paper, we assume that $X$ is a simple (loopless) connected weighted undirected graph. We denote the vertex and edge sets of $X$ by $V(X)$ and $E(X)$, respectively. Unless otherwise stated, all edges are assumed to have positive weights. We say that $X$ is \textit{unweighted} if all edges of $X$ have weight one. For $u\in V(X)$, we denote the characteristic vector of $u$ as $\be_u$. The all-ones vector, the zero vector, and the identity matrix of appropriate sizes are denoted by $\one$, $\zero$, and $I$, respectively. We write the transpose of a matrix $M$ by $M^T$, and the conjugate of $M$ by $\overline{M}$. For matrices $A$ and $B$ of the same size, we denote their Schur product by $A\circ B$. For a vector $\bx\in\C^n$, we let $\|\bx\|=\overline{\bx}^T\bx$. We denote the simple unweighted empty, cycle, complete, and path graphs on $n$ vertices by $O_n$, $C_n$, $K_n$, and $P_n$, respectively. We denote the complete bipartite graph by $K_{m,n}$, where $m$ and $n$ are the sizes of the parts. The adjacency matrix $A(X)$ of $X$ is the matrix indexed by $V(X)$ whose $(u,v)$-entry is equal to the weight of the edge $\{u,v\}$ whenever $u$ and $v$ are adjacent, and 0 otherwise. The Laplacian matrix  and signless Laplacian matrix of $X$ are defined as $L(X)=D(X)-A(X)$ and $Q(X)=D(X)+A(X)$, respectively, where $D(X)$ is the diagonal matrix of vertex degrees of $X$. We use $M(X)$ to denote $A(X)$, $L(X)$ or $Q(X)$. If the context is clear, then we simply write these matrices as $M$, $A$, $L$ and $Q$, respectively.

The matrices $U_M(t)$ and $M$ in (\ref{U}) are the \textit{transition matrix} and \textit{Hamiltonian} of the quantum walk, respectively. We write $U_M(t)$ as $U(t)$ if $M$ is clear from the context. Typically, $M$ is taken to be $A$, $L$, or $Q$, but in general, any real symmetric matrix $M$ that respects the adjacencies of $X$ works (that is, $M_{u,v}=0$ if and only if there is no edge between $u$ and $v$). Let $\spec(M)$ denote the distinct eigenvalues of $M$. Since $M$ is real symmetric, we can write $M=\sum_{\lambda\in\spec(M)}\lambda E_{\lambda}$ in its spectral decomposition,  where $E_{\lambda}$ is the orthogonal projection matrix onto the eigenspace for $\lambda\in\spec(M)$.
Thus, we may write
\begin{equation*}
U(t)=\sum_{\lambda\in\spec(M)}e^{\ii t\lambda}E_{\lambda}.
\end{equation*}
A vector $\bx\in\C^{|V(X)|}$ is \textit{periodic} in $X$ (relative to $M$) if there is a time $\tau>0$ such that $U(\tau)\bx=\gamma\bx$, where $\gamma\in\C$. Vertex $u$ in $X$ is \textit{periodic} if the vector $\be_u$ is periodic. The graph $X$ is \textit{periodic} if every vertex in $X$ is periodic at the same time. Since $U(t)$ is a unitary matrix, we get $\sum_{j\in V(X)}|U(t)_{u,j}|^2=1$ for all $t\in\R$ and for all $u\in V(X)$. Thus, every column (or row) of $U(t)\circ\overline{U(t)}$ determines a probability distribution.

\section{Uniform mixing}\label{sec:UM} 

Let $X$ be a weighted graph on $n$ vertices. A vector $\bx\in\C^n$ is $\sigma$-\textit{uniform} if the moduli of all entries in $\bx$ are equal to $\sigma$. 
If column $u$ of $U(\tau)$ is $\frac{1}{\sqrt{n}}$-uniform for some $\tau>0$, then \textit{local uniform mixing} occurs at vertex $u$ at time $\tau$ relative to $M$ (equivalently, the probability distribution determined by the $u$th column of $U(t)\circ\overline{U(t)}$ is uniform). If local uniform mixing occurs at each vertex of $X$ at time $\tau$, then $X$ admits \textit{uniform mixing} at time $\tau$ relative to $M$. A relaxation of local uniform mixing is \textit{local  $\epsilon$-uniform mixing} at vertex $u$, which occurs if for each $\epsilon>0$, there is a time $\tau$ such that 
\begin{center}
$\left\|\big(U(\tau)\be_u\big)\circ\big(\overline{U(\tau)}\be_u\big)-(1/n)\one\right\|<\epsilon.$
\end{center}
If for each $\epsilon>0$, there is a $\tau$ such that the above holds for each $u\in V(X)$, then $X$ admits \textit{$\epsilon$-uniform mixing}  relative to $M$. Thus, a vertex admitting local uniform mixing also admits local $\epsilon$-uniform mixing. Similarly, a graph admitting uniform mixing also admits $\epsilon$-uniform mixing. However, the converses of the two preceding statements need not be true. Moreover, if $u$ and $v$ are similar, i.e., there is an automorphism that sends $u$ to $v$, then  local $\epsilon$-uniform mixing occurs (resp, local uniform mixing) at $u$ if and only if it occurs at $v$ at the same time. Thus, $\epsilon$-uniform mixing (resp., uniform mixing) occurs in a vertex-transitive graph if and only if local $\epsilon$- uniform mixing (resp, local uniform mixing) occurs at one vertex in the graph.

Suppose local uniform mixing occurs at vertex $u$ in $X$ at time $\tau$. As we will show below, this is equivalent to the fact that $U(\tau)\be_u=\frac{1}{\sqrt{n}}\sum_{j=1}^{n}s_j\be_j$, where each $s_j$ is a unit complex number. In other words, local uniform mixing at vertex $u$ in $X$ is a unitary mapping of $\be_u$ to a uniform superposition of all characteristic vectors of vertices in $X$. Adapting the physical interpretation of $s$-pair state transfer in \cite{kim2024generalization}, we get that local uniform mixing represents the maximal entanglement of all qubits in the network represented by $X$ at a later time when evolved from a vertex state. In this case,  $U(\tau)\be_u$ is viewed as a pure state in the 1-excitation subspace $\mathbb{C}^n$ of the full $2^n$-dimensional system of $n$ spins. 

Our goal in this section is to give equivalent characterizations of local $\epsilon$-uniform mixing. To do this,
we define the \textit{eigenvalue support} of a vector $\bx$, denoted $\supp_{\bx}$, as the set $\supp_{\bx}=\{\lambda\in\spec(M):E_{\lambda}\bx\neq\zero\}.$

\begin{proposition}
\label{proplum}
Let $X$ be a weighted graph on $n$ vertices. The following are equivalent.
\begin{enumerate}
\item Local $\epsilon$-uniform mixing occurs at $u$.
\item There is a sequence $\{\tau_m\}\subseteq\R$ and a 1-uniform vector $\bmu\in\C^n$ such that
\begin{equation}
\label{locum}
\lim_{m\rightarrow\infty}\sqrt{n}U(\tau_m)\be_u=\bmu.
\end{equation}
\item There is a sequence $\{\tau_m\}\subseteq\R$ and a 1-uniform vector $\bmu\in\C^n$ such that 
\begin{center}
$\big(\displaystyle\lim_{m\rightarrow\infty}e^{\ii \tau_m\lambda}\big)\sqrt{n}E_\lambda\be_u=E_\lambda\bmu\quad $ for all $\lambda\in\supp_{\be_u}$.
\end{center}
Equivalently, for each eigenpair $(\bv,\lambda)$ with $\lambda\in\supp_{\be_u}$, we have $\big(\displaystyle\lim_{m\rightarrow\infty}e^{\ii\tau_m\lambda}\big)\sqrt{n}(\bv^T\be_u)=(\bv^T\bmu)$. Thus, $\displaystyle\lim_{m\rightarrow\infty}e^{\ii\tau_m\lambda}$ exists for all  $\lambda\in\supp_{\be_u}$.
\end{enumerate}
\end{proposition}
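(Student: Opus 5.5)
We prove (1)$\Rightarrow$(2)$\Rightarrow$(3)$\Rightarrow$(2)$\Rightarrow$(1); the engine is compactness of the unit sphere in $\C^n$ and of the unit circle.

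For (1)$\Rightarrow$(2), take $\epsilon=1/m$ and let $\tau_m$ be the time given by the definition. Then the vectors $\bx_m=\sqrt{n}\,U(\tau_m)\be_u$ satisfy $\|\bx_m\|=n$, since $U$ is unitary. Hence they lie in a compact set, and after passing to a subsequence they converge to some $\bmu\in\C^n$. The map $\bx\mapsto \bx\circ\overline{\bx}$ is continuous. The hypothesis gives $\bx_m\circ\overline{\bx_m}=n\big(U(\tau_m)\be_u\circ\overline{U(\tau_m)}\be_u\big)\to \one$. Therefore $\bmu\circ\overline{\bmu}=\one$, so $\bmu$ is $1$-uniform. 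For (2)$\Rightarrow$(1), run the same continuity argument backwards. From $\sqrt{n}U(\tau_m)\be_u\to\bmu$ we get $U(\tau_m)\be_u\circ\overline{U(\tau_m)\be_u}\to \frac1n\bmu\circ\overline{\bmu}=\frac1n\one$. So for each $\epsilon$, some $\tau_m$ works.

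For (2)$\Rightarrow$(3), apply the continuous linear map $E_\lambda$ to the convergent sequence. Since $E_\lambda U(t)=e^{\ii t\lambda}E_\lambda$, this gives $e^{\ii\tau_m\lambda}\sqrt{n}E_\lambda\be_u\to E_\lambda\bmu$. Now fix $\lambda\in\supp_{\be_u}$. Choose a coordinate where $E_\lambda\be_u$ is nonzero and divide by it. This shows that $e^{\ii\tau_m\lambda}$ converges, and its limit multiplied by $\sqrt{n}E_\lambda\be_u$ equals $E_\lambda\bmu$.

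For (3)$\Rightarrow$(2), write $\be_u=\sum_{\lambda\in\supp_{\be_u}}E_\lambda\be_u$. Then
\[
\sqrt{n}U(\tau_m)\be_u=\sum_{\lambda\in\supp_{\be_u}}e^{\ii\tau_m\lambda}\sqrt{n}E_\lambda\be_u \longrightarrow \sum_{\lambda\in\supp_{\be_u}}E_\lambda\bmu .
\]
It remains to check that this limit is $\bmu$, i.e. that $E_\lambda\bmu=\zero$ for every $\lambda\notin\supp_{\be_u}$. This is the one delicate point of the proof. The limits $\gamma_\lambda=\lim_m e^{\ii\tau_m\lambda}$ have modulus one, so the limit vector $\bmu'=\sum_\lambda\gamma_\lambda\sqrt{n}E_\lambda\be_u$ has squared norm $n\sum_\lambda\|E_\lambda\be_u\|=n$. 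On the other hand, $\bmu$ is $1$-uniform, so its squared norm is also $n$, and its part supported on $\supp_{\be_u}$ is exactly $\bmu'$. By Pythagoras, the component of $\bmu$ outside $\supp_{\be_u}$ has squared norm $n-n=0$, so it vanishes. Hence the limit is $\bmu$, which is (2).

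The eigenvector formulation is equivalent to the projection formulation. To see this, expand $E_\lambda=\sum_k \bv_k\bv_k^T$ over an orthonormal real eigenbasis of the eigenspace, and take inner products with each $\bv$. Existence of the limits was already obtained in the step (2)$\Rightarrow$(3).
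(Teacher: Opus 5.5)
Your proof is correct and follows the paper's route. For (1)$\Leftrightarrow$(2) you use compactness of the bounded sequence $\sqrt{n}U(\tau_m)\be_u$ together with continuity of $\bx\mapsto\bx\circ\overline{\bx}$, and for (2)$\Leftrightarrow$(3) you use the spectral decomposition. Your Pythagoras argument showing $E_\lambda\bmu=\zero$ for $\lambda\notin\supp_{\be_u}$ supplies a genuine detail in (3)$\Rightarrow$(2) that the paper leaves implicit under ``immediate from the spectral decomposition''.
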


\begin{proof}
Suppose local $\epsilon$-uniform mixing at vertex $u$. Then there exists $\{\tau_m\}\subseteq\R$ such that
\begin{center}
$\displaystyle\lim_{m\rightarrow\infty}\left(U(\tau_m)\be_u\circ \overline{U(\tau_m)}\be_u\right)=(1/n)\one.$
\end{center}
Hence, $U(\tau_m)\be_u$ is a bounded sequence in $\C^{n}$, and so there is a subsequence $\{\tau_{m_\ell}\}$ of $\{\tau_m\}$ such that $U(\tau_{m_\ell})\be_u$ converges to some $\by\in\C^n$. Thus, $\by\circ\overline{\by}=\frac{1}{n}\one$, and so $\by=\frac{1}{\sqrt{n}}\bmu$ for some 1-uniform vector $\bmu\in\C^n$. This proves that $(1)$ implies $ (2)$. The converse follows by definition. The equivalence of $(2)$ and $(3)$ is immediate from the spectral decomposition of $M$. 
\end{proof}

If Proposition~\ref{proplum}(2) holds, then we say that \textit{local $\epsilon$-uniform mixing occurs at $u$ relative to $\{\tau_m\}$}. 

The following is immediate from Proposition~\ref{proplum}.

\begin{corollary}
\label{charlum}
Let $X$ be a weighted graph on $n$ vertices and $M=\sum_{j}e^{\ii t\lambda_j}E_j$ be the spectral decomposition of $M$. The following are equivalent.
\begin{enumerate}
\item local uniform mixing occurs at $u$ at time $\tau$.
\item There exists a 1-uniform vector $\bmu\in\C^n$ such that $\sqrt{n}U(\tau)\be_u=\bmu$.
\item There exists a 1-uniform $\bmu\in\C^n$ such that $\sqrt{n}e^{\ii \tau\lambda}E_\lambda\be_u=E_\lambda\bmu$ for all $\lambda\in\supp_{\be_u}$.
Equivalently, for each eigenpair $(\bv,\lambda)$ with $\lambda\in\supp_{\be_u}$, $\sqrt{n}e^{\ii\tau\lambda}(\bv^T\be_u)=(\bv^T\bmu)$.
\end{enumerate}
\end{corollary}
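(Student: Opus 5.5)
The plan is to prove the cycle $(1)\Rightarrow(2)\Rightarrow(3)\Rightarrow(1)$. This is the ``exact'' version of Proposition~\ref{proplum}: we fix a single time $\tau$ and never need a limit or a compactness argument. Note that $U(\tau)=\sum_{\lambda}e^{\ii\tau\lambda}E_\lambda$, and the displayed formula in the statement should be read that way.

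For $(1)\Leftrightarrow(2)$, we unpack the definition. Local uniform mixing at $u$ at time $\tau$ means the vector $U(\tau)\be_u$ is $\frac{1}{\sqrt n}$-uniform, that is, $|U(\tau)_{j,u}|=\frac{1}{\sqrt n}$ for every $j$. Set $\bmu=\sqrt n\,U(\tau)\be_u$. Its entries then all have modulus $1$, so $\bmu$ is 1-uniform. Conversely, if $\sqrt n\,U(\tau)\be_u=\bmu$ with $\bmu$ 1-uniform, then every entry of $U(\tau)\be_u$ has modulus $1/\sqrt n$. This is exactly local uniform mixing at time $\tau$, which also gives $(2)\Rightarrow(1)$.

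For $(2)\Leftrightarrow(3)$, we apply $E_\lambda$ to both sides of $\sqrt n\,U(\tau)\be_u=\bmu$. The projections satisfy $E_\lambda E_\mu=\delta_{\lambda\mu}E_\lambda$, so $E_\lambda U(\tau)=e^{\ii\tau\lambda}E_\lambda$. This gives $\sqrt n\,e^{\ii\tau\lambda}E_\lambda\be_u=E_\lambda\bmu$ for every $\lambda\in\spec(M)$, and in particular for $\lambda\in\supp_{\be_u}$.

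The converse $(3)\Rightarrow(2)$ is the only step needing care. Condition (3) only concerns eigenvalues in $\supp_{\be_u}$, so we also need $E_\lambda\bmu=\zero$ for $\lambda\notin\supp_{\be_u}$. Summing the identities of (3) over $\supp_{\be_u}$ gives $\sqrt n\,U(\tau)\be_u=\sum_{\lambda\in\supp_{\be_u}}E_\lambda\bmu$, because $E_\lambda\be_u=\zero$ off the support. Taking norms, the left side has norm $\sqrt n$, since $U(\tau)$ is unitary. The vector $\bmu$ is 1-uniform, so its norm is also $\sqrt n$. Now $\|\bmu\|^2=\sum_{\lambda\in\spec(M)}\|E_\lambda\bmu\|^2$. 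Comparing the two norms forces $E_\lambda\bmu=\zero$ for all $\lambda\notin\supp_{\be_u}$. Hence $\sum_{\lambda\in\supp_{\be_u}}E_\lambda\bmu=\bmu$, and (2) follows.

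The eigenvector formulation follows by choosing orthonormal eigenbases of each eigenspace, so that $E_\lambda=\sum\bv\bv^T$ over these vectors. The projection identity in (3) is then equivalent to the scalar identities $\sqrt n\,e^{\ii\tau\lambda}(\bv^T\be_u)=\bv^T\bmu$ for each eigenvector $\bv$ in the basis, and by linearity for every real eigenvector $\bv$ with eigenvalue $\lambda$.
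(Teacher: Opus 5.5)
Your proof is correct and follows the route the paper intends: the paper just declares the corollary immediate from Proposition~\ref{proplum}, i.e., from the definition together with the spectral decomposition $U(\tau)=\sum_{\lambda}e^{\ii\tau\lambda}E_\lambda$. Your Pythagoras argument showing $E_\lambda\bmu=\zero$ for $\lambda\notin\supp_{\be_u}$ in the step $(3)\Rightarrow(2)$ supplies a detail the paper leaves implicit, and you are right that the displayed decomposition in the statement should read $U(\tau)=\sum_j e^{\ii\tau\lambda_j}E_j$ rather than $M=\sum_j e^{\ii t\lambda_j}E_j$.
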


An $n\times n$ matrix $\mathcal{H}$ is a \textit{complex Hadamard} matrix if all its entries are unit complex numbers, and 
$
\mathcal{H}\overline{\mathcal{H}}^T=nI.$ If the entries of a complex Hadamard matrix $\mathcal{H}$ are $r$th roots of unity for some integer $r$, then $\mathcal{H}$ is a \textit{Butson Hadamard} matrix. If the entries of $\mathcal{H}$ belong to the set $\{\pm 1,\pm\ii\}$ (resp., $\{\pm 1\}$), then $\mathcal{H}$ is a \textit{Turyn Hadamard} matrix (resp., a \textit{real Hadamard} matrix). A complex Hadamard matrix is \textit{dephased} if its first row and column are all ones, and \textit{non-dephased} otherwise. 

\begin{proposition}
\label{Lem:compHad}
We have $\epsilon$-uniform mixing in $X$ if and only if there exists a sequence $\{\tau_m\}\subseteq \R$ such that
\begin{center}
$\displaystyle\lim_{m\rightarrow\infty}\sqrt{n}U(\tau_m)=\mathcal{H}$,
\end{center}
where $\mathcal{H}$ is a symmetric complex Hadamard matrix. Additionally, if $M\in\{A,L\}$, then $\mathcal{H}$ is non-dephased.
\end{proposition}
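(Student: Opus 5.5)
The plan is to prove the two directions separately, using compactness of the unitary group and then reading off the Hadamard property from unitarity and symmetry.

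\emph{Forward direction.} Suppose $\epsilon$-uniform mixing occurs. By definition there is a sequence $\{\tau_m\}$ such that, for every $u$, $\big(U(\tau_m)\be_u\big)\circ\big(\overline{U(\tau_m)}\be_u\big)\to(1/n)\one$. The set of $n\times n$ unitary matrices is compact, so we pass to a subsequence along which $U(\tau_m)$ converges to some unitary $W$. This is the same subsequence-extraction step as in the proof of Proposition~\ref{proplum}, but done once for the whole matrix, so a single subsequence serves all columns simultaneously. Every entry of $W$ then has modulus $1/\sqrt{n}$. Set $\mathcal{H}=\sqrt{n}W$. Its entries are unit complex numbers, and since $W$ is unitary, $\mathcal{H}\overline{\mathcal{H}}^T=nW\overline{W}^T=nI$, so $\mathcal{H}$ is a complex Hadamard matrix. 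It is symmetric because $M$ is real symmetric, so each $U(t)=e^{\ii tM}$ is symmetric, and the limit of symmetric matrices is symmetric.

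\emph{Converse.} If $\sqrt{n}U(\tau_m)\to\mathcal{H}$ with $\mathcal{H}$ complex Hadamard, then every column of $\sqrt{n}U(\tau_m)$ converges to a $1$-uniform vector. By Proposition~\ref{proplum}(2), applied with one and the same sequence for all $u$, the Schur-square of every column of $U(\tau_m)$ tends to $(1/n)\one$. Hence for each $\epsilon>0$ a single large index $m$ works for all $u$ at once, which is exactly $\epsilon$-uniform mixing.

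\emph{Non-dephased claim.} This is the step that needs the most care, since it is where the specific choice of $M$ enters.
\begin{itemize}
\item For $M=L$: we have $L\one=0$, so $U(t)\one=\one$ for all $t$. Hence $\mathcal{H}\one=\sqrt{n}\,\one$, meaning each row sum of $\mathcal{H}$ equals $\sqrt{n}$. If $\mathcal{H}$ were dephased, its first row would be all ones, with sum $n$. Since $n\neq\sqrt{n}$ for $n>1$, this is a contradiction.
\item For $M=A$: I would look at the diagonal. $U(t)_{u,u}=\sum_\lambda e^{\ii t\lambda}(E_\lambda)_{u,u}$, and dephasing forces $\mathcal{H}_{1,1}=1$, so $U(\tau_m)_{1,1}\to1/\sqrt{n}$. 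Using $\operatorname{tr}A=0$ alone does not obviously exclude this, so I would instead exploit the whole first column. If it tends to $\one/\sqrt{n}$, then by Proposition~\ref{proplum}(3), $\lim e^{\ii\tau_m\lambda}\sqrt{n}E_\lambda\be_1=E_\lambda\one$ for every $\lambda$. Taking inner products with $\be_1$ gives $\lim e^{\ii\tau_m\lambda}\sqrt{n}(E_\lambda)_{1,1}=(E_\lambda\one)_1$. Since $(E_\lambda)_{1,1}\geq0$, and the first row of $\mathcal{H}$ is also all ones, symmetry yields $\lim\sqrt{n}\,U(\tau_m)\be_1=\one$. I would then compare $\lim U(\tau_m)A\be_1$ computed in two ways: $U$ commutes with $A$, so $\lim\sqrt{n}\,U(\tau_m)A\be_1=\lim\sqrt{n}\,AU(\tau_m)\be_1=A\one$, the degree vector. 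On the other hand, $U(\tau_m)A\be_1$ is a combination of columns of $U(\tau_m)$ weighted by the neighbours of vertex~1. Taking the first entry and using the dephased first row gives $\lim\sqrt{n}\,(U(\tau_m)A\be_1)_1=\deg(1)$, which is consistent, so this alone does not close the argument.
\end{itemize}

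The fallback for $A$ is to use the derivative structure. Since $U(\tau_m)$ has a convergent subsequence, its limit $W$ lies in the closure of $\{e^{\ii tA}\}$, which is a torus of matrices $\sum_\lambda z_\lambda E_\lambda$ with $|z_\lambda|=1$. Then $W_{1,1}=\sum_\lambda z_\lambda(E_\lambda)_{1,1}$ has modulus $1/\sqrt{n}$, and $W\be_1$ is real and positive. I expect this $A$-case to be the main obstacle, and I would try to show that the constraints $W\be_1=\one/\sqrt{n}$ and $W=\sum_\lambda z_\lambda E_\lambda$ force $\one$ to lie in the span of the $E_\lambda\be_1$ with a contradiction coming from $\operatorname{tr}A=0$ or from $\be_1^TA\be_1=0$.
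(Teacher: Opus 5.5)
Your two directions of the equivalence are fine, and a bit more careful than the paper's. Extracting a single convergent subsequence of $U(\tau_m)$ in the compact unitary group serves all columns at once, and you justify the symmetry of $\mathcal{H}$ explicitly.

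Your $L$ case is also correct, but it differs from the paper's. You use $U(t)\one=\one$ to get $\mathcal{H}\one=\sqrt{n}\,\one$, which rules out any constant row (hence, by symmetry, any constant column). The paper instead uses the identity $nM_{u,u}=\overline{\bmu}^TM\bmu$, i.e.\ Lemma~\ref{Lem:no1col}(3) with $r=1$, which is Corollary~\ref{Cor:eqs}(3); for constant $\bmu$ this gives $n\deg u=\one^TL\one=0$.

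The gap is the $A$ case, which you leave unresolved. As you note, the two-way computation of $U(\tau_m)A\be_1$ is consistent, and the torus fallback is only a plan. The missing idea is that the walk conserves the quadratic form of $A$.

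Start from the identity you already wrote, $\lim_m e^{\ii\tau_m\lambda}\sqrt{n}E_\lambda\be_1=E_\lambda\one$. Take squared norms rather than inner products with $\be_1$. The unknown phase disappears, and you get $n(E_\lambda)_{1,1}=\one^TE_\lambda\one$ for every $\lambda$. Multiplying by $\lambda$ and summing over $\spec(A)$ yields $nA_{1,1}=\one^TA\one$. The left side is $0$, while the right side is twice the total edge weight of the connected graph $X$ (with $n\geq 2$), which is positive: a contradiction.

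Equivalently, $\overline{U(t)\be_u}^TA\,U(t)\be_u=A_{u,u}=0$ for all $t$. So every target state satisfies $\overline{\bmu}^TA\bmu=0$, and no constant vector does.

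This is exactly the paper's route via Corollary~\ref{Cor:eqs}(1) and Corollary~\ref{no1}, and the same computation handles $L$ as well. Your hunch that the contradiction should come from $\be_1^TA\be_1=0$ was right. Without this conservation step, however, the non-dephased claim for $A$ is not proved.
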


\begin{proof}
By Proposition~\ref{proplum}, $\displaystyle\lim_{m\rightarrow\infty}\sqrt{n}U(\tau_m)=\mathcal{H}$, where all entries of $\mathcal{H}$ have unit moduli. Since
\begin{center}
$\displaystyle\mathcal{H}\overline{\mathcal{H}}^T=n\left(\lim_{m\rightarrow\infty}U(\tau_m)\right)\left(\overline{\lim_{m\rightarrow\infty}U(\tau_m)}\right)^T=n\left(\lim_{m\rightarrow\infty}U(\tau_m)\overline{U(\tau_m)}^T\right)=nI,$
\end{center}
it follows that $\mathcal{H}$ is a complex Hadamard matrix. Finally, if $\bmu$ is a column of $\mathcal{H}$, then a result in a latter section (Corollary~\ref{no1}) implies that $\bmu\neq \pm\one$, and so $\mathcal{H}$ is non-dephased.
\end{proof}

\begin{corollary}
\label{Lem:compHad1}
Uniform mixing occurs in $X$ at time $\tau$ if and only if  $\sqrt{n}U(\tau)$
is a symmetric complex Hadamard matrix. Additionally, if $M\in\{A,L\}$, then $\sqrt{n}U(\tau)$ is also non-dephased.
\end{corollary}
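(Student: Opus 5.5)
The plan is to specialize Proposition~\ref{Lem:compHad} to a single time, using Corollary~\ref{charlum} in place of Proposition~\ref{proplum}. For the forward direction, suppose uniform mixing occurs at time $\tau$. Then local uniform mixing occurs at every vertex $u$ at time $\tau$. By Corollary~\ref{charlum}(2), each column $\sqrt{n}U(\tau)\be_u$ is a 1-uniform vector, so every entry of $\mathcal{H}:=\sqrt{n}U(\tau)$ has modulus one. Since $M$ is real symmetric, $U(\tau)=\sum_\lambda e^{\ii\tau\lambda}E_\lambda$ is symmetric, and hence so is $\mathcal{H}$. Since $U(\tau)$ is unitary, we get $\mathcal{H}\overline{\mathcal{H}}^T=nU(\tau)\overline{U(\tau)}^T=nI$. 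So $\mathcal{H}$ is a symmetric complex Hadamard matrix.

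For the converse, suppose $\sqrt{n}U(\tau)$ is complex Hadamard. Then every column of $\sqrt{n}U(\tau)$ is 1-uniform. Corollary~\ref{charlum} then gives local uniform mixing at every vertex at time $\tau$, which is exactly uniform mixing at time $\tau$. Alternatively, both directions can be obtained from Proposition~\ref{Lem:compHad} by taking the constant sequence $\tau_m=\tau$, in which case the limit is just $\sqrt{n}U(\tau)$.

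For the non-dephased claim when $M\in\{A,L\}$, I would argue as in the proof of Proposition~\ref{Lem:compHad}. There the later result Corollary~\ref{no1} is invoked to show that no column $\bmu$ of the limiting matrix equals $\pm\one$; in particular the first column is not all ones, so the matrix is not dephased. I would apply the same corollary here, either directly to a column of $\sqrt{n}U(\tau)$ or by viewing $\sqrt{n}U(\tau)$ as the limit of the constant sequence. The intuition for the $L$ case is that $\one$ is an eigenvector, so $\one^T U(\tau)\be_u=1$. If the column were $\pm\one/\sqrt{n}$, we would need $\pm\sqrt{n}=1$, which is impossible for $n\ge 2$.

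The only genuine content is this non-dephased part, and it depends on Corollary~\ref{no1} from a later section. I would not reprove it here but cite it, exactly as the paper does for Proposition~\ref{Lem:compHad}. The rest is routine bookkeeping.
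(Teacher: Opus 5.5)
Your proposal is correct and follows the paper's route: the corollary is the single-time (constant-sequence) specialization of Proposition~\ref{Lem:compHad}, with unit-modulus entries, symmetry and unitarity giving the symmetric complex Hadamard property, and Corollary~\ref{no1} giving non-dephasedness. Your separate $L$-case argument via $U(\tau)\one=\one$ is a useful self-contained check that also works for weighted graphs. This matters because Corollary~\ref{no1} as stated assumes $X$ unweighted, although its one-line proof from Lemma~\ref{Lem:no1col}(3) with $r=1$ carries over to positive weights.
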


\begin{corollary}
\label{Lem:compHad1}
Suppose uniform mixing occurs in $X$ at time $\tau$. If all entries of $\sqrt{n}U(\tau)$ are algebraic, then $\sqrt{n}U(\tau)$ is a symmetric Butson Hadamard matrix.
\end{corollary}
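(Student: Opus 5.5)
The plan is to reduce the statement to Kronecker's theorem on algebraic numbers of modulus one. By Corollary~\ref{Lem:compHad1}, uniform mixing at time $\tau$ means that $\mathcal{H}=\sqrt{n}U(\tau)$ is a symmetric complex Hadamard matrix, so every entry $h$ of $\mathcal{H}$ satisfies $|h|=1$. Being symmetric and Hadamard is already given, so the only thing to prove is that each entry is an $r$th root of unity for a common $r$. Since there are finitely many entries, it suffices to show that each entry is a root of unity and then take $r$ to be the least common multiple of their orders. Kronecker's theorem says that a nonzero algebraic integer all of whose Galois conjugates have modulus one is a root of unity. So the proof has two parts: \emph{(i)} every entry is an algebraic integer, and \emph{(ii)} every conjugate of every entry lies on the unit circle.

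For \emph{(ii)}, let $K$ be the Galois closure over $\Q$ of the field generated by the entries of $\mathcal{H}$ and their complex conjugates. I would use the Hadamard relation $\mathcal{H}\overline{\mathcal{H}}^T=nI$ together with $|h_{jk}|^2=1$, which gives $h_{jk}\overline{h_{jk}}=1$, that is, $\overline{h_{jk}}=h_{jk}^{-1}$. Because $h_{jk}^{-1}$ lies in the field generated by $h_{jk}$ alone, complex conjugation acts on each entry as inversion. For any $\sigma\in\operatorname{Gal}(K/\Q)$, applying $\sigma$ to $h\cdot h^{-1}=1$ gives $\sigma(h)\,\sigma(h)^{-1}=1$. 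To conclude $|\sigma(h)|=1$, I additionally need $\overline{\sigma(h)}=\sigma(\overline{h})$, i.e.\ that complex conjugation commutes with $\sigma$ on these elements. This is immediate if the field generated by the entries is a CM field or totally real. I would try to obtain this from the symmetry $\mathcal{H}=\mathcal{H}^T$ and the structure $U(\tau)=\sum_\lambda e^{\ii\tau\lambda}E_\lambda$, with the $E_\lambda$ real.

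For \emph{(i)}, the natural route is to express each entry of $\mathcal{H}$ as $\sqrt{n}\sum_{\lambda}e^{\ii\tau\lambda}(E_\lambda)_{jk}$ and control the denominators. I expect this step to be the main obstacle. Algebraicity together with modulus one does \emph{not} by itself force integrality: the number $(3+4\ii)/5$ is algebraic and has modulus one, but it is not a root of unity. So the argument must use more than Corollary~\ref{Lem:compHad1}. I would try, first, to show that $\mathcal{H}$ generates a matrix algebra whose entries satisfy a monic integer polynomial, using $\mathcal{H}^{-1}=\frac1n\overline{\mathcal{H}}^T$ and the fact that $U(\tau)$ is unitary and symmetric. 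If that fails, I would check whether the result should be stated under an extra hypothesis, for instance that the entries are algebraic integers, under which Kronecker's theorem closes the argument directly.
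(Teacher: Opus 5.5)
Your reduction to Kronecker's theorem is the natural route, and your doubt about step (i) is justified. It cannot be completed, because with real positive weights and only algebraicity assumed, the statement is false. The paper gives no proof; it treats the claim as immediate from the Hadamard characterization. That tacitly uses the implication ``algebraic and unimodular $\Rightarrow$ root of unity'', which your example $(3+4\ii)/5$ refutes, and such entries genuinely occur.

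Here is a counterexample. Let $P_1,P_2,P_3=P_1P_2$ be the permutation matrices of translation by $(1,0),(0,1),(1,1)$ on $\Z_2^2$. They are commuting symmetric involutions whose supports partition the off-diagonal positions. So for $c>0$, $A=P_1+P_2+cP_3$ is a weighted $K_4$: a $4$-cycle of weight $1$ together with its two diagonals of weight $c$. Using $e^{\ii\theta P}=\cos\theta\,I+\ii\sin\theta\,P$ and writing $z=c\pi/4$,
\[
2U(\pi/4)=(I+\ii P_1)(I+\ii P_2)(\cos z\,I+\ii\sin z\,P_3)=e^{-\ii z}(I-P_3)+\ii e^{\ii z}(P_1+P_2).
\]
Hence uniform mixing occurs at $\tau=\pi/4$ for every $c>0$. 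Take $c=\frac{4}{\pi}\arctan\frac{4}{3}$, so that $e^{\ii z}=\frac{3+4\ii}{5}$. Then $\sqrt{n}\,U(\tau)$ is a symmetric complex Hadamard matrix with entries $\pm\frac{3-4\ii}{5}$ and $\frac{-4+3\ii}{5}$. These are algebraic, but they are not roots of unity (indeed not even algebraic integers).

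Since $X$ is weighted-regular, $U_Q(\pi/4)=\ii e^{\ii z}U_A(\pi/4)$ and $U_L(\pi/4)=\ii e^{\ii z}U_A(-\pi/4)$. The matrices $2U_Q(\pi/4)$ and $2U_L(\pi/4)$ contain entries $\eta e^{2\ii z}$ with $\eta^4=1$, so the statement fails for $L$ and $Q$ as well.

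Your proposed fallback is not sufficient either. Assuming the entries are algebraic integers does not let Kronecker's theorem close the argument, because step (ii) is a separate condition that does not come for free. In the same family, let $e^{\ii z}$ be a unimodular Galois conjugate of a Salem number, for example of the real root $>1$ of $x^4-x^3-x^2-x+1$. Then every entry of $2U(\pi/4)$ is an algebraic integer of modulus one. None of them is a root of unity, since each has a conjugate off the unit circle. Neither the symmetry of $\mathcal{H}$ nor the spectral form of $U(\tau)$ forces complex conjugation to commute with the Galois action.

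A correct version of the statement needs a hypothesis that delivers both (i) and (ii). One example is that the entries are algebraic integers lying in an abelian (or CM) number field; then your Kronecker argument goes through. Alternatively, for bipartite graphs with $M=A$, the block form of $U_A(t)$ already forces the entries of $\sqrt{n}\,U(\tau)$ into $\{\pm1,\pm\ii\}$, with no arithmetic input needed.
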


Lastly, we show that uniform mixing relative to $A$ and $L$ are equivalent for regular graphs.

\begin{proposition}
Let $M\in\{L,Q\}$ and $X$ be a weighted-regular graph. Local $\epsilon$-uniform mixing (resp., local uniform mixing) occurs at vertex $u$ relative to $A$ if and only if it occurs at vertex $u$ relative to $M$. 
\end{proposition}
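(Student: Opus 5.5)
If $X$ is weighted $k$-regular, then every vertex has weighted degree $k$, so $D=kI$. Hence $L=kI-A$ and $Q=kI+A$, that is, $M=kI\pm A$. Since $kI$ commutes with $A$, we get
\[
U_M(t)=e^{\ii t(kI\pm A)}=e^{\ii tk}\,e^{\pm \ii tA}=e^{\ii tk}\,U_A(\pm t).
\]
So $U_M(t)\be_u$ is a unimodular scalar multiple of $U_A(\pm t)\be_u$. Since $|e^{\ii tk}|=1$, the entrywise moduli agree:
\[
U_M(t)\be_u\circ\overline{U_M(t)\be_u}=U_A(\pm t)\be_u\circ\overline{U_A(\pm t)\be_u}.
\]

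For local uniform mixing, this identity gives the equivalence directly. If column $u$ of $U_A(\tau)$ is $\frac{1}{\sqrt n}$-uniform, then column $u$ of $U_M(\mp\tau)$ is as well, and conversely. The time may change sign (or stay the same, in the case $M=Q$).

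For local $\epsilon$-uniform mixing, the definition requires, for each $\epsilon>0$, some real time $\tau$ with $\big\|U(\tau)\be_u\circ\overline{U(\tau)}\be_u-\frac1n\one\big\|<\epsilon$. The identity above turns a witness $\tau$ for $A$ into the witness $\mp\tau$ for $M$, with exactly the same quantity, and conversely.

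If one insists on positive times, we have $U_A(-t)=\overline{U_A(t)}$ because $A$ is real. Therefore $U_A(-t)\be_u$ and $U_A(t)\be_u$ have the same entrywise moduli, so the sign of the time can always be flipped. Equivalently, in the form of Proposition~\ref{proplum}(2), the limit vector $\bmu$ is replaced by $\lim_m e^{\ii\tau_m k}\bmu$ after passing to a subsequence on which $e^{\ii \tau_m k}$ converges; this limit is still 1-uniform.

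There is no real obstacle. The only care needed is the sign of time for $L$ and the observation that weighted regularity makes $D$ scalar.
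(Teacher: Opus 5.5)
Your proof is correct and rests on the same identity as the paper's, $U_L(t)=e^{\ii tk}U_A(-t)$ (and $U_Q(t)=e^{\ii tk}U_A(t)$). Because you compare entrywise moduli directly, you avoid the paper's step of invoking $k\in\supp_{\be_u}$ and Proposition~\ref{proplum}(3) to make the phase $e^{\ii\tau_m k}$ converge. One harmless slip: under your convention $M=kI\pm A$ the witness should be $\pm\tau$, not $\mp\tau$, but this does not matter because $U_A(-t)=\overline{U_A(t)}$, as you yourself note.
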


\begin{proof}
Since $X$ is weighted $k$-regular, $L=kI-A$. If $M=L$, then $U_L(t)=e^{\ii tk}U_A(-t)$ for all $t\in\R$. Let $\{\tau_m\}\subseteq\R$ such that $\displaystyle\lim_{m\rightarrow\infty}\sqrt{n}U_A(\tau_m)\be_u=\bmu$ for some $1$-uniform vector $\bmu\in\C^n$. Since $k\in\supp_{\be_u}$, Proposition~\ref{proplum}(3) implies that $\{e^{-i\tau_mk}\}$ converges to $e^{-i\tau k}$ for some $\tau\in\R$. Consequently, 
\begin{center}
$\displaystyle\bmu=\lim_{m_\ell\rightarrow\infty}\sqrt{n}U_A(\tau_{m_\ell})\be_u=\lim_{m_\ell\rightarrow\infty}\sqrt{n}e^{-i\tau_{m_{\ell}}k}U_L(-\tau_{m_{\ell}})\be_u=e^{-i\tau k}\lim_{m_\ell\rightarrow\infty}\sqrt{n}U_L(-\tau_{m_{\ell}})\be_u.$
\end{center}
Taking the conjugates of the first and last expressions above yields $\displaystyle
\lim_{m_\ell\rightarrow\infty}\sqrt{n}U_L(\tau_{m_{\ell}})\be_u=e^{-i\tau k}\overline{\bmu}.$ Thus, local $\epsilon$-uniform mixing at vertex $u$ relative $A$ implies that it also occurs at vertex $u$ relative to $L$. A similar argument can be used to prove the converse. The case $M=Q$ follows similarly.
\end{proof}

We refer to the vector $\bmu\in\C^n$ in Corollary~\ref{charlum}(2) as the \textit{target state} of local $\epsilon$-uniform mixing. The main difficulty in the study of local $\epsilon$-uniform mixing and $\epsilon$-uniform mixing is the determination the target state $\bmu$ and the complex Hadamard matrix $\mathcal{H}$ in Proposition~\ref{proplum} and Proposition~\ref{Lem:compHad}, respectively.

\section{Necessary conditions}\label{sec:neccon}

\begin{proposition}
\label{prop:conn}
If $X$ admits local $\epsilon$-uniform mixing, then $X$ is a connected graph.
\end{proposition}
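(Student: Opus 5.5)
We prove the contrapositive: if $X$ is disconnected, then no vertex of $X$ admits local $\epsilon$-uniform mixing. (The paper assumes connectivity throughout, but here $X$ is allowed to be disconnected and $M\in\{A,L,Q\}$.)

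Fix a vertex $u$, and let $C$ be the vertex set of the connected component containing $u$. Since $X$ is disconnected, $C\neq V(X)$, so some vertex $w\notin C$ exists. Order the vertices so that the vertices of $C$ come first. Because $M_{a,b}=0$ whenever $a,b$ lie in different components, $M$ is block diagonal, $M=M_C\oplus M'$. For $A$ and $L$ this holds because the degree matrix $D$ is diagonal. Hence
\[
U(t)=e^{\ii tM_C}\oplus e^{\ii tM'}
\]
for every $t\in\R$, since the exponential of a block-diagonal matrix is block diagonal. It follows that $U(t)_{w,u}=0$ for all $t\in\R$ and every $w\notin C$.

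Now suppose local $\epsilon$-uniform mixing occurs at $u$. By Proposition~\ref{proplum}(2), there is a sequence $\{\tau_m\}$ and a 1-uniform vector $\bmu$ with $\sqrt{n}\,U(\tau_m)\be_u\to\bmu$. Look at the $w$th entry: the left side is identically $0$, so $\bmu_w=0$. This contradicts $|\bmu_w|=1$.

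The same argument can be phrased directly with the defining inequality. At every time $\tau$, the $w$th entry of $(U(\tau)\be_u)\circ(\overline{U(\tau)}\be_u)-(1/n)\one$ equals $-1/n$. So the norm is bounded below by a positive constant depending only on $n$, and any $\epsilon$ smaller than that bound fails. Under the paper's convention $\|\bx\|=\overline{\bx}^T\bx$, the lower bound is $1/n^2$.

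There is no real obstacle here. The only step that needs care is justifying block diagonality of $M$, and hence of $U(t)$, for all three choices of $M$ simultaneously; this uses only that $M$ respects the adjacencies of $X$, plus diagonal terms.
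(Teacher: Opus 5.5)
Your proof is correct and follows the paper's argument: $M$ is block diagonal, so $U(t)$ is block diagonal, so $U(t)\be_u$ has a zero entry for all $t$, which no limit of $\sqrt{n}\,U(\tau_m)\be_u$ can reconcile with a $1$-uniform target. One small slip: the sentence about the diagonal degree matrix $D$ should refer to $L$ and $Q$, not $A$ and $L$, since $A$ itself is block diagonal without any appeal to $D$.
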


\begin{proof}
We prove the contrapositive. Suppose $X$ is disconnected. In this case, $M(X)$ has a block diagonal form, and consequently, $U(t)$ also has a block diagonal form for all $t$. This implies that for each vertex $u$, $U(t)\be_u$ has a zero entry for all $t$. Thus, local $\epsilon$-uniform mixing does occur at each vertex in $X$.
\end{proof}

In what follows, we let $m_\lambda$ denote the multiplicity of $\lambda\in\spec(M)$.

\begin{lemma}
\label{Lem:no1col}
With the assumption in Proposition~\ref{proplum}, the following hold. 
\begin{enumerate}
\item For each $\lambda\in\supp_{\be_u}$, $\sqrt{n}\left\|E_\lambda\be_u\right\|=\left\|E_\lambda\bmu\right\|$. Consequently, $\supp_{\be_u}=\supp_{\bmu}$.
\item 
For each $\lambda\in\spec(M)$, 
\begin{equation}
\label{111}
n(E_\lambda)_{u,u}=m_\lambda+2\sum_{j>\ell}\cos(\theta_j-\theta_{\ell})(E_\lambda)_{j,{\ell}},
\end{equation}
\item For all integers $r\geq 0$,
\begin{equation}
\label{cosp}
n(M^r)_{u,u}=\sum_{j\in V(X)}(M^r)_{j,j}+2\sum_{j>\ell}\cos(\theta_j-\theta_{\ell})(M^r)_{j,{\ell}}.
\end{equation}
\end{enumerate}
\end{lemma}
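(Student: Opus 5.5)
Throughout, I write $\bmu=\sum_j e^{\ii\theta_j}\be_j$, which is the notation implicit in (\ref{111}) and (\ref{cosp}), and I let $\gamma_\lambda=\lim_{m\rightarrow\infty}e^{\ii\tau_m\lambda}$. This limit exists for every $\lambda\in\spec(M)$ after we pass to a subsequence of $\{\tau_m\}$, since the unit circle is compact. The common starting point is Proposition~\ref{proplum}(3), which gives
\[
\gamma_\lambda\sqrt{n}E_\lambda\be_u=E_\lambda\bmu \quad\text{for all } \lambda\in\spec(M).
\]
For $\lambda\notin\supp_{\be_u}$ both sides are zero. The right side vanishes because the full sum $\sum_\lambda \gamma_\lambda\sqrt n E_\lambda\be_u$ equals $\bmu$, and applying $E_\lambda$ to it kills every term except possibly the one indexed by $\lambda$.

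For (1), take norms of this identity. Since $|\gamma_\lambda|=1$, we get $\sqrt{n}\|E_\lambda\be_u\|=\|E_\lambda\bmu\|$. (With the paper's convention $\|\bx\|=\overline{\bx}^T\bx$, the identity reads $n\|E_\lambda\be_u\|=\|E_\lambda\bmu\|$; either form gives the equality of supports.) It follows that $E_\lambda\be_u\neq\zero$ if and only if $E_\lambda\bmu\neq\zero$, so $\supp_{\be_u}=\supp_{\bmu}$.

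For (2), square the norms and use the fact that $E_\lambda$ is a real symmetric idempotent. This gives
\[
n(E_\lambda)_{u,u}=n\,\be_u^TE_\lambda\be_u=\overline{\bmu}^TE_\lambda\bmu=\sum_{j,\ell}e^{\ii(\theta_\ell-\theta_j)}(E_\lambda)_{j,\ell}.
\]
I split this sum into diagonal and off-diagonal parts. The diagonal terms contribute $\sum_j(E_\lambda)_{j,j}=\operatorname{tr}E_\lambda=m_\lambda$. The off-diagonal terms pair up as $(j,\ell)$ and $(\ell,j)$, and since $E_\lambda$ is symmetric each pair gives $2\cos(\theta_j-\theta_\ell)(E_\lambda)_{j,\ell}$. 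This yields (\ref{111}). The identity also holds for $\lambda\notin\supp_{\be_u}$, because in that case both $E_\lambda\be_u$ and $E_\lambda\bmu$ are zero, as noted above.

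For (3), multiply (\ref{111}) by $\lambda^r$ and sum over $\lambda\in\spec(M)$, using $M^r=\sum_\lambda\lambda^rE_\lambda$. On the left this gives $n(M^r)_{u,u}$. On the right, $\sum_\lambda\lambda^r\operatorname{tr}E_\lambda=\operatorname{tr}M^r=\sum_j(M^r)_{j,j}$, and the cosine terms combine into $(M^r)_{j,\ell}$. Equivalently, one can compute directly:
\[
n\,\be_u^TM^r\be_u=\lim_{m\rightarrow\infty}\overline{\sqrt n U(\tau_m)\be_u}^{\,T}M^r\,\sqrt nU(\tau_m)\be_u=\overline{\bmu}^TM^r\bmu,
\]
because $M^r$ commutes with $U(t)$ and $U(t)$ is unitary.

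The only delicate point is justifying that the limits $\gamma_\lambda$ exist for all $\lambda$, including those outside the support. Passing to a subsequence handles this and does not change $\bmu$. The rest is routine bookkeeping.
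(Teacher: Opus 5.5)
Your proposal is correct and follows essentially the same route as the paper. Both start from Proposition~\ref{proplum}(3), take norms for (1), expand the quadratic form $\overline{\bmu}^TE_\lambda\bmu$ for (2), and weight by $\lambda^r$ and sum over $\spec(M)$ for (3); the paper works one eigenvector at a time and sums $\bv\bv^T$ over an orthonormal eigenbasis where you use the projection $E_\lambda$ directly, and your explicit check that $E_\lambda\bmu=\zero$ for $\lambda\notin\supp_{\be_u}$ (needed for (2) at all $\lambda\in\spec(M)$ and for $\supp_{\bmu}\subseteq\supp_{\be_u}$) and your squared-norm remark tidy points the paper's proof passes over.
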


\begin{proof}
From Proposition~\ref{proplum}(3),
$\displaystyle\lim_{m\rightarrow\infty}\sqrt{n}e^{\ii \tau_m\lambda}E_\lambda\be_u=E_\lambda\bmu$ for all $\lambda\in\supp_{\be_u}$. Taking modulus yields 1. To prove 2, we again use Proposition~\ref{proplum}(3) which states that for each eigenvector $\bv$ associated with $\lambda\in\supp_{\be_u}$, we have $\displaystyle\lim_{m\rightarrow\infty}\sqrt{n}e^{\ii \tau_m\lambda}(\bv^T\be_u)=\bv^T\bmu$. From this, we get
\begin{center}
$n(\bv\bv^T)_{u,u}=\big(\overline{\displaystyle\lim_{m\rightarrow\infty}\sqrt{n}e^{\ii \tau_m\lambda}\bv^T\be_u}\big)^T\big(\displaystyle\lim_{m\rightarrow\infty}\sqrt{n}e^{\ii \tau_m\lambda}\bv^T\be_u\big)=\big(\overline{\bv^T\bmu}\big)^T(\bv^T\bmu)=\overline{\bmu}^T(\bv\bv^T)\bmu$.
\end{center}
Since $\bmu=\sum_{j}e^{\ii \tau\theta_j}\be_j$, simplifying this equation yields $n(\bv\bv^T)_{u,u}=\|\bv\|^2+2\sum_{j>\ell}\cos(\theta_j-\theta_\ell)(\bv\bv^T)_{j,\ell}$. Summing this equation over an orthonormal basis of eigenvectors associated with $\lambda$ gives us (\ref{111}). Finally, we prove 3. Since $m_\lambda$ is equal to the trace of $E_\lambda$, we may write (\ref{111}) as
\begin{center}
$n(E_\lambda)_{u,u}=\sum_{j}(E_\lambda)_{j,j}+2\sum_{j>\ell}\cos(\theta_j-\theta_{\ell})(E_\lambda)_{j,{\ell}}.$
\end{center}
Multiplying both sides of the above equation by $\lambda^r$ and summing over all $\lambda\in\spec(M)$ yields 3.
\end{proof}

Let $X$ be an unweighted graph.
We denote the average degree, maximum degree, and the number of common neighbors of vertices $j$ and $\ell$ in $X$ by $\overline{d}$, $\Delta$, and $c_{j,\ell}$, respectively.

\begin{corollary}
\label{Cor:eqs}
Let $X$ be an unweighted graph on $n$ vertices. If local $\epsilon$-uniform mixing occurs at $u$ with target state $\bmu=[e^{\ii \theta_1},e^{\ii \theta_2},\ldots,e^{\ii \theta_{n}}]^T$, then the following hold.
\begin{enumerate}
\item If $M=A$, then
\begin{equation}
\label{Eq:cos}
\sum_{\{j,\ell\}\in E(X)}\cos(\theta_j-\theta_{\ell})=0,
\end{equation}
and
\begin{equation}
\label{Eq:cos1}
|E(X)|+\sum_{j>\ell}\cos(\theta_j-\theta_{\ell})c_{j,\ell}=\frac{1}{2}n\operatorname{deg}u.
\end{equation}
\item If $M=Q$, then
\begin{equation}
\label{Eq:cosQ}
|E(X)|+\sum_{\{j,\ell\}\in E(X)}\cos(\theta_j-\theta_{\ell})=\frac{1}{2}n\operatorname{deg}u.
\end{equation}
\item If $M=L$, then
\begin{equation}
\label{Eq:cosL}
|E(X)|-\sum_{\{j,\ell\}\in E(X)}\cos(\theta_j-\theta_{\ell})=\frac{1}{2}n\operatorname{deg}u.
\end{equation}
\end{enumerate}
\end{corollary}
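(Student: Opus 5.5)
The plan is to specialize Lemma~\ref{Lem:no1col}(3), equation (\ref{cosp}), to the first two powers $r=1$ and $r=2$ for the three choices of $M$. In each case we read off the diagonal and off-diagonal entries of $M^r$ from the combinatorics of the unweighted graph $X$. Since (\ref{cosp}) sums over unordered pairs $j>\ell$, each term is $\cos(\theta_j-\theta_\ell)(M^r)_{j,\ell}$, and $(M^r)_{j,\ell}$ is symmetric in $j,\ell$. This means only the off-diagonal support of $M^r$ matters.

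For $M=A$, we first take $r=1$. Here $A_{u,u}=0$ and all diagonal entries of $A$ vanish, while $A_{j,\ell}=1$ exactly when $\{j,\ell\}\in E(X)$. Thus (\ref{cosp}) reads $0=0+2\sum_{\{j,\ell\}\in E(X)}\cos(\theta_j-\theta_\ell)$, which gives (\ref{Eq:cos}). Next we take $r=2$. We have $(A^2)_{u,u}=\operatorname{deg}u$, the trace of $A^2$ is $\sum_j\operatorname{deg}j=2|E(X)|$, and $(A^2)_{j,\ell}=c_{j,\ell}$ for $j\neq\ell$. Substituting gives $n\operatorname{deg}u=2|E(X)|+2\sum_{j>\ell}\cos(\theta_j-\theta_\ell)c_{j,\ell}$, and dividing by $2$ yields (\ref{Eq:cos1}).

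For $M=Q$ and $M=L$, the case $r=1$ already suffices. Now $M_{u,u}=\operatorname{deg}u$ and $\operatorname{tr}M=2|E(X)|$. The off-diagonal entries are $+1$ (for $Q$) or $-1$ (for $L$) on edges and $0$ elsewhere. Equation (\ref{cosp}) then becomes $n\operatorname{deg}u=2|E(X)|\pm2\sum_{\{j,\ell\}\in E(X)}\cos(\theta_j-\theta_\ell)$, which after halving is exactly (\ref{Eq:cosQ}) and (\ref{Eq:cosL}).

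The only point needing care is bookkeeping, not a genuine obstacle. We must check that the target state is indexed as in the statement, $\bmu=\sum_j e^{\ii\theta_j}\be_j$, so that the angles in (\ref{cosp}) match those here. We must also confirm that Lemma~\ref{Lem:no1col}(3) is valid for every $r\ge0$, including sums over all eigenvalues, not just those in $\supp_{\be_u}$. This holds because (\ref{111}) is stated for all $\lambda\in\spec(M)$. For $\lambda\notin\supp_{\be_u}$, both $E_\lambda\be_u$ and $E_\lambda\bmu$ vanish by part~1, so the identity $n(E_\lambda)_{u,u}=\overline{\bmu}^TE_\lambda\bmu$ holds trivially as $0=0$.
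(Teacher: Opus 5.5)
Your proposal is correct and matches the paper's proof: specialize Lemma~\ref{Lem:no1col}(3) to $r\in\{1,2\}$ for $M=A$ and to $r=1$ for $M\in\{Q,L\}$, and your entry computations for $A$, $A^2$, $Q$ and $L$ are all right. Your extra check that (\ref{111}) also holds for $\lambda\notin\supp_{\be_u}$ (both sides vanish because $E_\lambda\be_u=E_\lambda\bmu=\zero$) supplies a detail that the paper's proof of Lemma~\ref{Lem:no1col}(2) leaves implicit.
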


\begin{proof}
Taking $M=A$ and $r\in\{1,2\}$ in Lemma~\ref{Lem:no1col}(3) yields the two equations in (1), while taking $M=Q$ (respectively, $M=L$) and $r=1$ in Lemma~\ref{Lem:no1col}(3) yields (2) (respectively, (3)).   
\end{proof}

The next two results rule out vectors from being target states of local $\epsilon$-uniform mixing.

\begin{corollary}
\label{no1}
Let $X$ be an unweighted graph on $n$ vertices that admits local $\epsilon$-uniform mixing at vertex $u$. If $M\in\{A,L\}$, then the entries of $\bmu$ cannot be all equal.
\end{corollary}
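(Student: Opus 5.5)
Suppose, for contradiction, that all entries of $\bmu$ are equal, so that $\theta_j=\theta_\ell$ for all $j,\ell$ and every cosine in Corollary~\ref{Cor:eqs} equals $1$. The plan is to substitute this into the identities of Corollary~\ref{Cor:eqs}, which hold for any target state of local $\epsilon$-uniform mixing at $u$, and derive a contradiction separately for $M=A$ and $M=L$.

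For $M=A$, equation (\ref{Eq:cos}) becomes $\sum_{\{j,\ell\}\in E(X)}1=|E(X)|=0$. Thus $X$ has no edges. By Proposition~\ref{prop:conn} the graph $X$ is connected, so $n=1$. In that trivial case $U(t)\be_u$ is a unimodular scalar and mixing holds vacuously, so we must either exclude it or assume $n\geq 2$. For $n\geq 2$ we get a contradiction, since a connected graph on at least two vertices has an edge. Alternatively, one can argue spectrally: with $\bmu=e^{\ii\theta}\one$, the $r=1$ case of Lemma~\ref{Lem:no1col}(3) reads $n A_{u,u}=\one^TA\one$, i.e.\ $0=2|E(X)|$, which gives the same conclusion.

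For $M=L$, equation (\ref{Eq:cosL}) becomes $|E(X)|-|E(X)|=\tfrac12 n\deg u$, so $\deg u=0$. This contradicts connectivity when $n\geq2$. A cleaner route avoids the unweighted hypothesis entirely. Since $L\one=\zero$, the vector $\one$ lies in $E_0$, and hence $E_\lambda\bmu=\zero$ for every $\lambda\neq0$. Lemma~\ref{Lem:no1col}(1) then forces $\supp_{\be_u}=\{0\}$, so $\be_u$ would be a multiple of the projection onto the kernel. That projection is $\frac1n\one\one^T$ for connected $X$, which is impossible when $n\geq 2$.

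The only real obstacle is the degenerate case $n=1$, where the statement is vacuous. I would state that we assume $n\geq2$, which is implicit in the paper. Everything else is direct substitution into Corollary~\ref{Cor:eqs} together with connectivity from Proposition~\ref{prop:conn}. Note that the argument fails for $Q$, since $|E|+|E|=\tfrac12 n\deg u$ gives no contradiction; this explains why $Q$ is excluded from the statement.
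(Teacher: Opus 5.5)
Your proof is correct and is essentially the paper's. Setting all $\theta_j$ equal makes every cosine equal to $1$, so (\ref{Eq:cos}) forces $|E(X)|=0$ and (\ref{Eq:cosL}) forces $\operatorname{deg}u=0$, both contradicting connectivity (Proposition~\ref{prop:conn}) once $n\geq 2$. Your exclusion of the trivial case $n=1$, which the paper leaves implicit, is a sound addition, as is your weighted variant for $L$ via $\supp_{\be_u}=\supp_{\bmu}=\{0\}$, which yields $\be_u=E_0\be_u=\frac{1}{n}\one$.
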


\begin{proof}
If $\bmu=e^{\ii\theta}\one$, then $\sum_{\{j,\ell\}\in E(X)}\cos(\theta_j-\theta_{\ell})=|E(X)|$, a contradiction to Corollary~\ref{Cor:eqs}(1,3).
\end{proof}

\begin{corollary}
\label{edgexp1}
Let $M=A$ and $X$ be an unweighted graph on $n$ vertices. If one of the conditions below hold, then $\bmu=[e^{\ii \theta_1},e^{\ii \theta_2},\ldots,e^{\ii \theta_{n}}]^T$ is not a target state for local $\epsilon$-uniform mixing in $X$.
\begin{enumerate}
\item There is a partition $\bigcup_{j=1}^k V_j$ of $V(X)$ such that $\bmu$ is constant on each $V_j$ and $\sum_{j=1}^k|E(V_j)|>
\sum_{1\leq \ell<j}|E(V_j,V_\ell)|$.
\item All the $\theta_j$'s (mod $2\pi$) belong to an interval $[a,b]$, where $b-a\leq \frac{\pi}{2}$.
\end{enumerate}
\end{corollary}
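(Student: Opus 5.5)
Both conditions will be shown to contradict equation~(\ref{Eq:cos}) of Corollary~\ref{Cor:eqs}(1). That equation says that any target state $\bmu=[e^{\ii\theta_1},\ldots,e^{\ii\theta_n}]^T$ for local $\epsilon$-uniform mixing relative to $A$ satisfies $\sum_{\{j,\ell\}\in E(X)}\cos(\theta_j-\theta_\ell)=0$. So in each case we assume $\bmu$ is a target state and show that this edge sum is strictly positive.

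For condition (1), split the edge set according to the partition. Each edge lies either inside some $V_j$ or between two distinct parts $V_j,V_\ell$. If both ends lie in the same $V_j$, then $\bmu$ is constant there, so $\theta_j\equiv\theta_\ell \pmod{2\pi}$ and the cosine equals $1$. If the ends lie in different parts, we only use $\cos(\cdot)\geq -1$. This gives
\[
\sum_{\{j,\ell\}\in E(X)}\cos(\theta_j-\theta_\ell)\;\geq\;\sum_{j=1}^k|E(V_j)|-\sum_{1\leq \ell<j\leq k}|E(V_j,V_\ell)|\;>\;0,
\]
where the last inequality is the hypothesis. This contradicts~(\ref{Eq:cos}).

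For condition (2), replace each $\theta_j$ by a representative in $[a,b]$; this does not change any of the cosines. Then for every pair $j,\ell$ we have $|\theta_j-\theta_\ell|\leq b-a\leq \pi/2$, so $\cos(\theta_j-\theta_\ell)\geq 0$, with equality only when $|\theta_j-\theta_\ell|=\pi/2$. Hence the edge sum is nonnegative. By~(\ref{Eq:cos}) it is zero, so every edge $\{j,\ell\}$ has $|\theta_j-\theta_\ell|=\pi/2$. This forces $b-a=\pi/2$ and every edge to join a vertex with angle $a$ to a vertex with angle $b$.

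This equality case is the main obstacle. To exclude it, I will use equation~(\ref{Eq:cos1}). In the equality configuration every vertex with an edge has angle $a$ or $b$, and $X$ is connected with $n\geq 2$ (Proposition~\ref{prop:conn}), so all $\theta_j\in\{a,b\}$ and $X$ is bipartite with parts $P=\{j:\theta_j=a\}$ and $R=\{j:\theta_j=b\}$. Two vertices with a common neighbour lie in the same part, so $c_{j,\ell}>0$ only when $\cos(\theta_j-\theta_\ell)=1$. Equation~(\ref{Eq:cos1}) then becomes
\[
|E(X)|+\sum_{j>\ell}c_{j,\ell}=\tfrac12 n\deg u,\qquad\text{where}\qquad \sum_{j>\ell}c_{j,\ell}=\sum_{w}\binom{\deg w}{2}.
\]

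From here two routes are available. The first is to show that $\sum_w\deg w+\sum_w\deg w(\deg w-1)=\sum_w(\deg w)^2$ exceeds $n\deg u$ in general; I have not verified this, and it may fail. The more reliable route is Lemma~\ref{Lem:no1col}(1) applied to the eigenvalue $\lambda_{\max}$ and its Perron vector $\bv>0$. Since $\bmu$ is $e^{\ii a}$ on $P$ and $e^{\ii b}=\pm\ii e^{\ii a}$ on $R$, we get $|\bv^T\bmu|^2=(\bv^T\one_P)^2+(\bv^T\one_R)^2$. By the bipartite symmetry of the spectrum, $\bv^T\one_P=\bv^T\one_R$, so this equals $\tfrac12(\bv^T\one)^2$.

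The step I am least sure of is turning this into a contradiction. Lemma~\ref{Lem:no1col}(1) gives $n v_u^2=|\bv^T\bmu|^2$, and I would try to contradict it using the analogous identity for the eigenvector of $-\lambda_{\max}$, namely $\bv$ with its signs flipped on $R$, whose pairing with $\bmu$ has the same modulus. If this comparison does not close the argument, I would fall back on excluding the boundary case $b-a=\pi/2$ via the first route above, or accept that equality needs separate treatment.
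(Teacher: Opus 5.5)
\paragraph{Review of the proposal.}

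Your argument for condition (1) is correct and is exactly the paper's. For condition (2) you have located a real hole. The paper's proof also passes over it: it asserts that a nonconstant $\bmu$ gives some edge with $\cos(\theta_j-\theta_\ell)>0$, which fails in precisely your equality configuration.

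The hole cannot be filled, because when $b-a=\pi/2$ the statement is false. Two examples:
\begin{itemize}
\item Take $X=K_2$. Since $A^2=I$, we have $U(\pi/4)=\frac{1}{\sqrt{2}}(I+\ii A)$, so $\sqrt{2}\,U(\pi/4)\be_1=[1,\ii]^T$. Its angles $0,\pi/2$ lie in $[0,\pi/2]$, yet it is a target state (even of local uniform mixing).
\item At the center $c$ of $K_{1,3}$, $U(t)\be_c$ has entry $\cos(\sqrt{3}\,t)$ at $c$ and $\frac{\ii}{\sqrt{3}}\sin(\sqrt{3}\,t)$ at each leaf. Taking $\sqrt{3}\,t=2\pi/3$ gives the target state $\bmu=[-1,\ii,\ii,\ii]^T$, whose angles lie in $[\pi/2,\pi]$.
\end{itemize}

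Accordingly, both of your routes break down on these examples:
\begin{itemize}
\item The identity $\sum_w(\deg w)^2=n\deg u$ holds in both ($2=2$ and $12=12$), so the first route gives no contradiction.
\item With the Perron vectors $[1,1]^T$ and $[\sqrt{3},1,1,1]^T$ one has $nv_u^2=|\bv^T\bmu|^2$ exactly ($2=2$ and $12=12$). The same holds for the sign-flipped eigenvectors of $-\lambda_{\max}$, so the second route gives no contradiction either.
\item Separately, your step $\bv^T\one_P=\bv^T\one_R$ is not justified. Orthogonality of the eigenvectors for $\lambda$ and $-\lambda$ gives equal \emph{norms} of the restrictions of $\bv$ to the two parts, not equal sums. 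For $K_{1,3}$ the two sums are $\sqrt{3}$ and $3$.
\end{itemize}

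\paragraph{What can be proved.}

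What is true is condition (2) with $b-a<\pi/2$, and $n\geq 2$ (since $K_1$ is a trivial exception). In that form your argument closes immediately, and more cleanly than the paper's. Every $|\theta_j-\theta_\ell|<\pi/2$, so every cosine in (\ref{Eq:cos}) is strictly positive. Since $X$ is connected and has an edge, the sum is positive, contradicting (\ref{Eq:cos}) with no need for Corollary~\ref{no1}.

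More generally, once you reduce to an explicit equality configuration, test it on the smallest examples before trying to refute it. Here Proposition~\ref{1}(1) says bipartite target states have the form $[\bmu_1,\ii\bmu_2]^T$ with $\pm1$ vectors $\bmu_1,\bmu_2$. So every bipartite target state with constant $\bmu_1$ and $\bmu_2$ falls into this boundary case.
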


\begin{proof}
If $\bmu$ is a target state for local $\epsilon$-uniform mixing in $X$, then Corollary~\ref{Cor:eqs}(1) holds. If condition 1 is true, then $\bmu$ is constant on each $V_j$, and so $\sum_{\{j,\ell\}\in E(V_j)}\cos(\theta_j-\theta_{\ell})=|E(V_j)|$. Making use of (\ref{Eq:cos}), we obtain $0=\sum_{\{j,\ell\}\in E(X)}\cos(\theta_j-\theta_{\ell})\geq  \sum_{j=1}^k|E(V_j)|-
\sum_{1\leq \ell<j}|E(V_j,V_\ell)|$,
a contradiction. On the other hand, if 2 is true, then $\cos(\theta_j-\theta_{\ell})\geq 0$. If all entries of $\bmu$ are equal, then we are done by Corollary~\ref{no1}. However, if $\bmu$ has at least two distinct entries, then $\cos(\theta_j-\theta_{\ell})>0$ for some edge $\{j,\ell\}$. This implies that $\sum_{\{j,\ell\}\in E(X)}\cos(\theta_j-\theta_{\ell})>0$, a contradiction to Corollary~\ref{Cor:eqs}(1).
\end{proof}

We now use the average degree to bound the degree of a vertex admitting local $\epsilon$-uniform mixing.

\begin{theorem}
\label{Cor:avedeg}
Let $M\in\{L,Q\}$ and $X$ be an unweighted graph. If local $\epsilon$-uniform mixing occurs at $u$ in $X$, then $\operatorname{deg}u\leq 2\overline{d}$. In particular, if $\epsilon$-uniform mixing occurs in $X$, then $\Delta\leq 2\overline{d}$.
\end{theorem}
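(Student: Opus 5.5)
We derive the bound directly from the trace identities in Corollary~\ref{Cor:eqs}(2) and (3), using only that each cosine lies in $[-1,1]$. Suppose local $\epsilon$-uniform mixing occurs at $u$ relative to $M\in\{L,Q\}$, with target state $\bmu=[e^{\ii\theta_1},\ldots,e^{\ii\theta_n}]^T$ given by Proposition~\ref{proplum}.

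For $M=Q$, equation (\ref{Eq:cosQ}) reads $|E(X)|+\sum_{\{j,\ell\}\in E(X)}\cos(\theta_j-\theta_\ell)=\frac{1}{2}n\operatorname{deg}u$. Since $\cos(\theta_j-\theta_\ell)\leq 1$ for every edge, the sum is at most $|E(X)|$. Hence $\frac12 n\operatorname{deg}u\leq 2|E(X)|$.

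For $M=L$, equation (\ref{Eq:cosL}) has a minus sign in front of the sum. Using $-\cos(\theta_j-\theta_\ell)\leq 1$ again gives $\frac12 n\operatorname{deg}u\leq 2|E(X)|$.

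By the handshake lemma, $2|E(X)|=n\overline{d}$. In both cases we therefore get $\frac12 n\operatorname{deg}u\leq n\overline{d}$, which is $\operatorname{deg}u\leq 2\overline{d}$.

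For the second assertion, suppose $\epsilon$-uniform mixing occurs in $X$. By definition every vertex then admits local $\epsilon$-uniform mixing, in particular a vertex of maximum degree $\Delta$. Applying the first part to that vertex gives $\Delta\leq 2\overline{d}$.

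There is no real obstacle. The only point to check is that Corollary~\ref{Cor:eqs} applies here, which holds because $X$ is unweighted, so $(M)_{u,u}=\operatorname{deg}u$ and $\operatorname{tr}M=2|E(X)|$. All the substantive work was already done in Lemma~\ref{Lem:no1col}(3).
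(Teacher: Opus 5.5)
Your proposal is correct and follows essentially the same route as the paper, which bounds the cosine terms in Corollary~\ref{Cor:eqs}(2)--(3) by $1$ in absolute value to get $\operatorname{deg}u\leq \frac{4|E(X)|}{n}=2\overline{d}$. You then obtain the statement about $\Delta$ by applying this to a vertex of maximum degree, as intended.
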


\begin{proof}
Since $|\cos(\theta_j-\theta_{\ell})|\leq 1$, Corollary~\ref{Cor:eqs}(2-3) yields $\operatorname{deg}u\leq \frac{4|E(X)|}{n}=2\overline{d}$.
\end{proof}

We also establish a connection between local $\epsilon$-uniform mixing and the eigenvectors of $M$. 

\begin{theorem}
\label{lum}
Let $X$ be a weighted graph on $n$ vertices. If local $\epsilon$-uniform mixing occurs at $u$, then each eigenvector $\mathbf{v}=[v_1,\ldots,v_n]$ of $M$ satisfies
\begin{center}
$\displaystyle\sqrt{n}|v_u|\leq\sum_{j}\big|v_j\big|.$
\end{center}
In particular, if $\epsilon$-uniform mixing occurs in $X$, then the above equation holds for each vertex $u$ of $X$.
\end{theorem}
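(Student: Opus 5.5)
The plan is to use the eigenpair form of Proposition~\ref{proplum}(3). Let $\bv$ be a real eigenvector of $M$ for the eigenvalue $\lambda$. We can take $\bv$ real because $M$ is real symmetric, and every eigenvector can be split into real and imaginary parts, each of which is again an eigenvector.

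If $v_u=0$, the inequality is trivial, so assume $v_u\neq 0$. Then $\bv^T\be_u\neq 0$, which forces $E_\lambda\be_u\neq\zero$ and hence $\lambda\in\supp_{\be_u}$. By Proposition~\ref{proplum}(3) there is a sequence $\{\tau_m\}$ and a $1$-uniform vector $\bmu=[\mu_1,\ldots,\mu_n]^T$ such that
\begin{center}
$\big(\displaystyle\lim_{m\rightarrow\infty}e^{\ii\tau_m\lambda}\big)\sqrt{n}\,v_u=\bv^T\bmu.$
\end{center}
Each $e^{\ii\tau_m\lambda}$ has modulus one, so the limit also has modulus one. Taking moduli of both sides therefore gives $\sqrt{n}|v_u|=|\bv^T\bmu|$.

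It remains to bound the right-hand side. By the triangle inequality, $|\bv^T\bmu|=\big|\sum_j v_j\mu_j\big|\leq\sum_j|v_j||\mu_j|=\sum_j|v_j|$, since $|\mu_j|=1$ for every $j$. This proves the inequality for $\bv$.

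For a complex eigenvector $\bv=\bx+\ii\by$ with $\bx,\by$ real, we cannot apply the real case to $\bx$ and $\by$ separately and add, because $|v_u|\leq|x_u|+|y_u|$ bounds $|v_u|$ in the wrong direction. Instead, we use that the identity in Proposition~\ref{proplum}(3) is linear in $\bv$. It holds for $\bx$ and $\by$, which are real eigenvectors for $\lambda$ (any zero part satisfies it trivially). Adding gives $\big(\lim_{m}e^{\ii\tau_m\lambda}\big)\sqrt{n}\,v_u=\bv^T\bmu$, and the same modulus and triangle-inequality argument then applies to complex $\bv$.

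The final assertion follows by applying the result at each vertex $u$. If $\epsilon$-uniform mixing occurs, then local $\epsilon$-uniform mixing occurs at every vertex. The only real subtlety in the argument is checking that $\lambda\in\supp_{\be_u}$ whenever $v_u\neq0$, and this holds because $E_\lambda\bv=\bv$, so $\bv^TE_\lambda\be_u=v_u\neq 0$.
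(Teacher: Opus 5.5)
Your proof is correct and follows essentially the same route as the paper: apply the eigenpair form of Proposition~\ref{proplum}(3), take moduli using that the limit of $e^{\ii\tau_m\lambda}$ has modulus one, and bound $|\bv^T\bmu|$ by the triangle inequality with $|\mu_j|=1$. Your extra checks are sound refinements rather than a change of approach. The first is that $v_u\neq 0$ forces $\lambda\in\supp_{\be_u}$ via $\bv^TE_\lambda\be_u=v_u$, which the paper leaves implicit. The second is the treatment of complex eigenvectors, which could also be done directly by left-multiplying $\big(\lim_{m}e^{\ii\tau_m\lambda}\big)\sqrt{n}E_\lambda\be_u=E_\lambda\bmu$ by $\bv^T$, since $E_\lambda\bv=\bv$.
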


\begin{proof}
Suppose $\bmu=\sum_{j}e^{\ii \theta_j}$ is the target state. By Proposition~\ref{proplum}(3), $\displaystyle\lim_{m\rightarrow\infty}\sqrt{n}e^{\ii\tau_m\lambda}(\bv^T\be_u)=(\bv^T\bmu)$ for each eigenvector $\bv$ associated with $\lambda\in\supp_{\be_u}$. Taking modulus and applying triangle inequality gives us $\sqrt{n}\left|v_u\right|=\sqrt{n}\left|\bv^T\be_u\right|=\big|\bv^T\bmu\big|=\big|\sum_{j}e^{\ii \theta_j}\bv^T\be_j\big|\leq \sum_{j}\big|v_j\big|$.
\end{proof}

Given a vector $\bv$ with entries in the set $\{c_1,-c_2,0\}$ with $c_1,c_2>0$, Alencar, de Lima, and Nikiforov characterized graphs for which $\bv$ is an eigenvector for $M\in\{A,L,Q\}$ \cite{alencar2023graphs}. For these graphs, we provide a necessary condition for local uniform mixing. 

\begin{corollary}
\label{Lima}
Let $X$ be a weighted graph on $n$ vertices. Given constants $c_1,c_2>0$ and a partition $S_{1}$, $S_{2}$, and $S_{3}$ of $V(X)$, let $\bv$ be a vector such that $\bv^T\be_w=c_1$ if $w\in S_1$, $\bv^T\be_w=-c_2$ if $w\in S_2$, and $\bv^T\be_w=0$ otherwise. If $\bv$ is an eigenvector for $M\in\{A,L,Q\}$ and local $\epsilon$-uniform mixing occurs at some vertex $u\in S_1$ (resp., $u\in S_2$) relative to $M$, then $n\leq \big(|S_1|+\frac{c_2}{c_1}|S_2|\big)^2$ (resp., $n\leq \big(\frac{c_1}{c_2}|S_1|+|S_2|\big)^2$).
\end{corollary}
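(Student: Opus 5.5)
This is a direct application of Theorem~\ref{lum} to the given eigenvector $\bv$; the only point to check is that Theorem~\ref{lum} applies to $\bv$ itself.

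First, by hypothesis $\bv$ is an eigenvector of $M$ for some eigenvalue $\lambda$. If $u\in S_1$, then $\bv^T\be_u=c_1\neq 0$. Hence $E_\lambda\be_u\neq\zero$, since $\bv^TE_\lambda\be_u=\bv^T\be_u\neq 0$. So $\lambda\in\supp_{\be_u}$, and $\bv$ is an eigenvector associated with an eigenvalue in the support of $\be_u$. This is exactly the setting used in the proof of Theorem~\ref{lum}, via Proposition~\ref{proplum}(3). Theorem~\ref{lum} as stated already covers every eigenvector, so no further verification is needed.

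Applying Theorem~\ref{lum} to $\bv$ gives
\[
\sqrt{n}\,c_1=\sqrt{n}|v_u|\leq\sum_j|v_j|=c_1|S_1|+c_2|S_2|,
\]
because entries indexed by $S_3$ vanish. Dividing by $c_1>0$ and squaring both nonnegative sides yields $n\leq\big(|S_1|+\frac{c_2}{c_1}|S_2|\big)^2$.

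If instead $u\in S_2$, then $|v_u|=c_2$ and the same inequality reads $\sqrt{n}\,c_2\leq c_1|S_1|+c_2|S_2|$. Dividing by $c_2$ and squaring gives $n\leq\big(\frac{c_1}{c_2}|S_1|+|S_2|\big)^2$.

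There is no real obstacle here. The only care needed is the remark that $\bv$ has a nonzero $u$th entry, so that it genuinely lies in an eigenspace meeting the support of $\be_u$.
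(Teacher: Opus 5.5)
Your proof is correct and is the paper's own argument: apply Theorem~\ref{lum} to $\bv$ at $u$ to get $\sqrt{n}\,c_1\leq c_1|S_1|+c_2|S_2|$ (resp.\ $\sqrt{n}\,c_2\leq c_1|S_1|+c_2|S_2|$), then divide by $c_1$ (resp.\ $c_2$) and square. Your right-hand side is the correct one; the paper's written proof has a typo, with $c_2|S_1|$ in place of $c_2|S_2|$.
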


\begin{proof}
If local uniform mixing occurs at $u\in S_1$ (respectively, $u\in S_2$), then applying Theorem~\ref{lum} gives us $\sqrt{n}c_1\leq c_1|S_1|+c_2|S_1|$ (respectively, $\sqrt{n}c_2\leq c_1|S_1|+c_2|S_1|$). From this, the result is immediate.
\end{proof}

Taking $c_1=c_2$ and $u\in S_1\cup S_2$ in Corollary~\ref{Lima} yields the following result.

\begin{corollary}
\label{Cor:01}
Let $X$ be a weighted graph on $n$ vertices. Let $\mathbf{v}$ be an eigenvector for $M$ with entries in $\{-1,0,1\}$ having $r$ nonzero entries. If local $\epsilon$-uniform mixing occurs at $u$ and $\bv^T\be_u\neq 0$, then $n\leq r^2$.
\end{corollary}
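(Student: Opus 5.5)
The plan is to specialize Corollary~\ref{Lima} to $c_1=c_2=1$, or, more directly, to apply Theorem~\ref{lum} to the given vector $\bv$. I will take the direct route.

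First, I will check that Theorem~\ref{lum} applies to $\bv$. The theorem is stated for every eigenvector of $M$. Its proof, through Proposition~\ref{proplum}(3), uses eigenpairs $(\bv,\lambda)$ with $\lambda\in\supp_{\be_u}$. Since $\bv^T\be_u\neq 0$, we have $E_\lambda\be_u\neq\zero$ for the eigenvalue $\lambda$ of $\bv$: indeed $\bv^TE_\lambda\be_u=\bv^T\be_u\neq 0$. Hence $\lambda\in\supp_{\be_u}$ and the hypothesis is satisfied.

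Second, I will evaluate both sides of the inequality in Theorem~\ref{lum}:
\[
\sqrt{n}\,|v_u|\le\sum_j|v_j| .
\]
Because $v_u\in\{\pm1\}$, the left side equals $\sqrt{n}$. Every entry of $\bv$ lies in $\{-1,0,1\}$ and exactly $r$ of them are nonzero, so the right side equals $r$. This gives $\sqrt{n}\le r$. Squaring yields $n\le r^2$.

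The only point needing care is the first step, namely that $\lambda$ lies in the eigenvalue support. That is exactly what the hypothesis $\bv^T\be_u\neq0$ guarantees, so I expect no real obstacle.
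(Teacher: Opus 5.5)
Your proof is correct and is essentially the paper's argument. The paper obtains this corollary by setting $c_1=c_2$ in Corollary~\ref{Lima}, which is itself Theorem~\ref{lum} applied to a $\{c_1,-c_2,0\}$-vector, so applying Theorem~\ref{lum} directly is the same computation. Your check that $\bv^T\be_u\neq 0$ forces $\lambda\in\supp_{\be_u}$ (via $\bv^TE_\lambda\be_u=\bv^T\be_u$) is a detail the paper leaves implicit.
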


\section{Twin subgraphs}
\label{sec:tw}

\begin{definition}
\label{deftw}
Let $G$ and $H$ be induced subgraphs of a graph $X$ with disjoint vertex sets. Let $f:V(G)\rightarrow V(H)$ be a bijection such that $A(X)_{u,w}=A(X)_{f(u),w}$ for all $u\in V(G)$ and $w\in V(X)\setminus (V(G)\cup V(H))$.
\begin{enumerate}
\item If $f$ is an isomorphism between $G$ and $H$, and there are no edges between $V(G)$ and $V(H)$ in $X$, then $G$ and $H$ are called \textit{false twin subgraphs} in $X$. 
\item If $G$ and $H$ are weighted-regular with the same valency and the bipartite subgraph induced by the edges between $G$ and $H$ is weighted-regular, then  $G$ and $H$ are called \textit{true twin subgraphs} in $X$.
\end{enumerate}
\end{definition}

There are two fundamental differences between false twin subgraphs and true twin subgraphs. The first is that false twin subgraphs are required to be isomorphic, whereas true twin subgraphs are not. And second, the bipartite subgraph $X$ induced by the edges between the false twin subgraphs must be empty, whereas they need not be empty for true twin subgraphs. 

\begin{example}
In Figure \ref{fii}, $X$ and $Y$ contain a pair of false twin subgraphs $G$ and $H$ that are isomorphic to $P_3$. As $G$ is not regular, $G$ and $H$ are not true twin subgraphs of $X$ and $Y$. Meanwhile, in Figure \ref{fi}, $X$ and $Y$ contain a pair of true twin subgraphs $G$ and $H$ that are isomorphic to $K_2$. In particular, if we add a perfect matching between $G$ and $H$, then $G$ and $H$ remain true twin subgraphs in the resulting graphs.
\end{example}

For false twin subgraphs, we have the following result that holds for $M=A$.

\begin{lemma}
\label{lem:x-x}
Let $X$ be a connected weighted graph with false twin subgraphs $G$ and $H$. If $\bx$ is an eigenvector of $A(G)$ for $\lambda\in\spec(A(G))$, then $[\bx,-\bx,\zero]^T$ is an eigenvector of $A(X)$ for $\lambda\in\spec(A(X))$.
\end{lemma}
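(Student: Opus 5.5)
The plan is a direct block-matrix computation. Order the vertices of $X$ as $V(G)$, then $V(H)$, then $W=V(X)\setminus(V(G)\cup V(H))$. Label the vertices of $H$ through the bijection $f$, so the $i$th vertex of $H$ is $f$ applied to the $i$th vertex of $G$. With this ordering, $A(X)$ has the block form
\begin{equation*}
A(X)=\begin{bmatrix} A(G) & 0 & B\\ 0 & A(H) & C\\ B^T & C^T & A(X[W])\end{bmatrix}.
\end{equation*}
The zero blocks hold because false twins have no edges between $V(G)$ and $V(H)$.

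Next I use the two defining conditions of false twin subgraphs. Since $f$ is an isomorphism from $G$ to $H$ respecting weights, we get $A(H)=A(G)$ in the chosen ordering. The condition $A(X)_{u,w}=A(X)_{f(u),w}$ for all $u\in V(G)$ and $w\in W$ gives $C=B$.

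Then I compute $A(X)[\bx,-\bx,\zero]^T$ blockwise:
\begin{itemize}
\item the first block is $A(G)\bx+0-B\cdot\zero\cdots$; more precisely it is $A(G)\bx$, since the $(1,2)$ block is zero and the third entry of the vector is $\zero$, and this equals $\lambda\bx$;
\item the second block is $A(H)(-\bx)=-A(G)\bx=-\lambda\bx$;
\item the third block is $B^T\bx-C^T\bx=(B-C)^T\bx=\zero$.
\end{itemize}
Hence $A(X)[\bx,-\bx,\zero]^T=\lambda[\bx,-\bx,\zero]^T$. The vector is nonzero because $\bx\neq\zero$, so it is an eigenvector of $A(X)$ and $\lambda\in\spec(A(X))$.

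There is no real obstacle. The only care needed is in the bookkeeping: fixing the vertex ordering of $H$ via $f$ so that $A(H)=A(G)$ holds entrywise, including edge weights, and so that $C=B$ follows from the adjacency condition. Connectivity of $X$ is not used in this argument.
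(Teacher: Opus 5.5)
Your proof is correct and matches the paper's own argument. Both partition $A(X)$ conformally with $V(G)$, $V(H)$ and the remaining vertices, use the false twin conditions to get equal diagonal blocks $A(G)$, zero blocks between $G$ and $H$, and equal off-diagonal blocks to the rest, and then multiply out; your explicit ordering of $H$ via $f$ (so that $A(H)=A(G)$ and $C=B$ hold entrywise) just makes precise bookkeeping the paper leaves implicit.
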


\begin{proof}
Let $Z$ be the subgraph of $X$ induced by $V(X)\backslash (V(G)\cup V(H))$. By definition of false twin subgraphs, we may partition $A(X)$ conformally relative to $V(G)$, $V(H)$ and $V(G)\backslash (V(G)\cup V(H))$ as
\begin{center}
$A(X)=\begin{bmatrix}A(G)&O&Y \\  O&A(G)&Y\\ Y^T&Y^T&A(Z)\end{bmatrix}$
\end{center}
where $Y$ is some rectangular matrix. Thus, $A(X)[\bx,-\bx,\zero]^T=\lambda[\bx,-\bx,\zero]^T$.
\end{proof}

For true twin subgraphs, we have the following result that works for $M\in\{A,L,Q\}$.

\begin{lemma}
\label{1-10}
If $X$ is a weighted graph on $n$ vertices with a pair of true twin subgraphs, each with $a$ vertices, then the vector $[\one_a ,-\one_a ,\zero]^T$ is an eigenvector for $M$.
\end{lemma}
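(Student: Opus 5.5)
We verify the eigen-equation directly by block computation, as in the proof of Lemma~\ref{lem:x-x}. Let $G$ and $H$ be the true twin subgraphs, with $|V(G)|=|V(H)|=a$, and let $Z$ be the subgraph induced by $W=V(X)\setminus(V(G)\cup V(H))$. Order the vertices as $V(G)$, $V(H)$, $W$ and partition
\begin{center}
$A(X)=\begin{bmatrix}A(G)&B&Y\\ B^T&A(H)&Y'\\ Y^T&Y'^T&A(Z)\end{bmatrix}$,
\end{center}
where $B$ holds the weights of the edges between $G$ and $H$. The bijection $f$ in Definition~\ref{deftw} gives $A(X)_{g,w}=A(X)_{f(g),w}$ for all $w\in W$. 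Hence, after ordering $V(H)$ according to $f$, we have $Y'=Y$.

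By hypothesis, $G$ and $H$ are weighted $k$-regular, so $A(G)\one_a=A(H)\one_a=k\one_a$. The bipartite graph between them is weighted $b$-regular, so $B\one_a=B^T\one_a=b\one_a$. This is where both parts having the same size $a$ is used: a weighted-regular bipartite graph with $b>0$ forces equal row and column sums, so equal part sizes are consistent, and the case $b=0$ is trivial.

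For the adjacency matrix, multiplying by $\bx=[\one_a,-\one_a,\zero]^T$ gives the following blocks:
\begin{itemize}
\item first block: $k\one_a-b\one_a$;
\item second block: $b\one_a-k\one_a=-(k-b)\one_a$;
\item third block: $Y^T\one_a-Y^T\one_a=\zero$.
\end{itemize}
Thus $A\bx=(k-b)\bx$.

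For $L=D-A$ and $Q=D+A$, it suffices to show that $D\bx$ is a scalar multiple of $\bx$. Then $\bx$ is an eigenvector of $L$ and $Q$ with eigenvalues $d-(k-b)$ and $d+(k-b)$, where $d$ is that scalar. We need every vertex of $G\cup H$ to have the same degree in $X$. A vertex $g\in V(G)$ has degree $k+b+\sum_{w\in W}A_{g,w}$, and $f(g)$ has the same degree since $Y'=Y$.

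\textbf{Main obstacle.} The remaining issue is whether the row sums of $Y$ are constant over $V(G)$. Definition~\ref{deftw} only matches $g$ with $f(g)$, so this is not immediate. The entries of $D\bx$ on $V(G)$ are $\deg g$, and on $V(H)$ they are $-\deg f(g)=-\deg g$. So $D\bx=\operatorname{diag}(\deg)\,\bx$ restricted to $V(G)\cup V(H)$, which is proportional to $\bx$ only if these degrees are constant.

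To resolve this, we would use the convention in \texttt{Definition}~\ref{deftw} that true twin subgraphs are regular within $X$ in the sense that their vertices have a common external neighbourhood weight. Failing that, we would decompose $\bx$ as a sum of vectors $\be_g-\be_{f(g)}$. Each such vector is an eigenvector of $D$, with eigenvalue $\deg g$. For $M=A$ only the block computation above is needed; for $L$ and $Q$ we add the degree-constancy check.
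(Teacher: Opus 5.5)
Your adjacency computation is correct and matches the paper's proof. Your $B^T$ in the $(2,1)$ block is actually more accurate than the paper's $B$. For $L$ and $Q$, you located the real issue: $D\bx$ is a multiple of $\bx$ only if $Y\one_a$ is constant, i.e.\ every vertex of $G$ sends the same total weight into $Z$.

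Neither of your proposed fixes works, however. Definition~\ref{deftw} imposes no condition on the weight from $V(G)$ into $Z$, so appealing to a ``convention'' is adding a hypothesis. Writing $\bx=\sum_g(\be_g-\be_{f(g)})$ as a sum of $D$-eigenvectors with possibly different eigenvalues $\deg g$ does not make $\bx$ a $D$-eigenvector. The summands are also not $A$-eigenvectors, so nothing follows for $L=D-A$ or $Q=D+A$.

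This gap cannot be closed, because the statement is false for $L$ and $Q$ as written. Take $X=P_5$ with edges $\{1,2\},\{2,3\},\{3,4\},\{4,5\}$. Let $G$ be induced on $\{1,2\}$ and $H$ on $\{4,5\}$, with $f(1)=5$, $f(2)=4$ and $Z=\{3\}$. Then:
\begin{itemize}
\item $G\cong H\cong K_2$ are both $1$-regular;
\item there are no $G$--$H$ edges, so the bipartite part is $0$-regular;
\item $A_{1,3}=A_{5,3}=0$ and $A_{2,3}=A_{4,3}=1$.
\end{itemize}
So $G$ and $H$ are true twin subgraphs. This is exactly the left graph of Figure~\ref{fi} with $Z=\{3\}$ and no dashed edges. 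With $\bx=[1,1,0,-1,-1]^T$ in the order $1,\dots,5$, we get $A\bx=\bx$. But $L\bx=[0,1,0,-1,0]^T$ and $Q\bx=[2,3,0,-3,-2]^T$, and neither is a multiple of $\bx$.

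The paper's own proof has the same hole. Its assertion $D(X)\bv=(k+\ell+p)\bv$ tacitly assumes a constant weighted degree into $Z$. A correct version for $L$ and $Q$ needs the extra hypothesis $Y\one_a=p\one_a$. Under it, your computation gives $L\bx=(2b+p)\bx$ and $Q\bx=(2k+p)\bx$.

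For $a=1$ this hypothesis holds automatically, so Corollary~\ref{twin} survives. Corollary~\ref{twinsubg} with $a\ge 2$ and $M\in\{L,Q\}$ inherits the problem.
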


\begin{proof}
Consider $Z$ as in Lemma \ref{lem:x-x}. By definition of true twin subgraphs, we may partition $A(X)$ and $D(X)$ conformally relative to $V(G)$, $V(H)$ and $V(G)\backslash (V(G)\cup V(H))$ as
\begin{center}
$A(X)=\begin{bmatrix}A(G)&B&Y \\  B&A(H)&Y\\ Y^T&Y^T&A(Z)\end{bmatrix}\quad $ and $\quad D(X)=\begin{bmatrix}D_1&O&O \\  O&D_2&O\\ O&O&D_3\end{bmatrix}$.
\end{center}
Let $\bv=[\one_a ,-\one_a ,\zero]^T$. Since $A(G)\one=A(H)\one=k\one$, a simple calculation reveals that 
\begin{center}
 $A(X)\bv=[(A(G)-B)\one_a,(B-A(H))\one_a,\zero]^T=(k-\ell)\bv$.
\end{center}
Finally, if $p$ is the degree of vertex $u$ in the bipartite graph induced by the edges between $G$ and $H$, then $D(X)\bv=(k+\ell+p)\bv$, and so $\bv$ is an eigenvector for $M\in\{A,L,Q\}$.
\end{proof}

We now prove one of our main results in this section, which applies to graphs with false twin subgraphs.

\begin{theorem}
\label{regtwinsubg}
Let $X$ be a weighted graph on $n$ vertices with false twin subgraphs $G$ and $H$, each with $a$ vertices. If local $\epsilon$-uniform mixing occurs at vertex $u\in V(G)\cup V(H)$ in $X$ relative to $A$, then for each eigenvector $\mathbf{v}=[v_1,\ldots,v_a]$ of $A(G)$,
\begin{center}
$\displaystyle\sqrt{n}|v_u|\leq 2\sum_{j}|v_j|\leq 2a\rho$
\end{center}
where $\rho=\max_{j}|v_j|$. In particular, if $\epsilon$-uniform mixing occurs in $X$ relative to $A$, then $n\leq 4a^2$.
\end{theorem}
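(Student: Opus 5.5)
The plan is to combine Lemma~\ref{lem:x-x} with Theorem~\ref{lum}. Suppose local $\epsilon$-uniform mixing occurs at $u\in V(G)\cup V(H)$ relative to $A$. By the symmetry of the roles of $G$ and $H$ (the bijection $f$ is an isomorphism, and $G$, $H$ play interchangeable roles in the block form of $A(X)$ used in Lemma~\ref{lem:x-x}), we may assume $u\in V(G)$. If $u\in V(H)$, we identify $u$ with $f^{-1}(u)\in V(G)$ and index $\bv$ through $f$.

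Next, take any eigenvector $\bv=[v_1,\ldots,v_a]$ of $A(G)$ with eigenvalue $\lambda$. Lemma~\ref{lem:x-x} tells us that $\mathbf{w}=[\bv,-\bv,\zero]^T$ is an eigenvector of $A(X)$ for $\lambda$. Its $u$th entry is $\pm v_u$. Its entries have moduli $|v_1|,\ldots,|v_a|$, each appearing twice, and zeros elsewhere. Applying Theorem~\ref{lum} to $\mathbf{w}$ then gives
\[
\sqrt{n}|v_u|\le \sum_{j\in V(X)}|w_j| = 2\sum_{j=1}^a |v_j|\le 2a\rho .
\]
The last step uses only $|v_j|\le\rho$. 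Theorem~\ref{lum} was stated for every eigenvector. If one worries about the support hypothesis, the inequality is trivial when $v_u=0$. When $v_u\neq0$, we have $\lambda\in\supp_{\be_u}$ because $\mathbf{w}^T\be_u\neq0$, so Proposition~\ref{proplum}(3) applies and the inequality holds.

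For the final claim, suppose $\epsilon$-uniform mixing occurs. Then local $\epsilon$-uniform mixing occurs at every vertex, in particular at every $u\in V(G)$. Choose any eigenvector $\bv$ of $A(G)$ and pick $u$ with $|v_u|=\rho>0$. The inequality becomes $\sqrt{n}\rho\le 2a\rho$, so $n\le 4a^2$.

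The only delicate point I expect is the bookkeeping for $u\in V(H)$, that is, checking that $[\bv,-\bv,\zero]^T$ has entry $-v_{f^{-1}(u)}$ at $u$. This is immediate from the block form in Lemma~\ref{lem:x-x}, so the proof should be short.
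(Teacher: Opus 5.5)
Your proposal is correct and follows the paper's proof: you lift an eigenvector $\bv$ of $A(G)$ to the eigenvector $[\bv,-\bv,\zero]^T$ of $A(X)$ via Lemma~\ref{lem:x-x}, apply the triangle-inequality argument of Theorem~\ref{lum}, and for the final claim choose $u$ with $|v_u|=\rho>0$. Your remark that $v_u\neq 0$ forces $\lambda\in\supp_{\be_u}$, while the inequality is trivial when $v_u=0$, fills in a detail the paper leaves implicit.
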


\begin{proof}
The first statement follows from Lemma~\ref{lem:x-x} and the argument used in Theorem~\ref{lum}. If $\epsilon$-uniform mixing occurs in $X$, then the above equation holds for each $u\in V(G)$. Let $\mathbf{v}$ be an eigenvector of $A(G)$, and $w$ be an index where $\rho=\max_j|v_j|$ occurs. Then $|v_w|=\rho$, and so $\sqrt{n}\leq 2a.$
\end{proof}

The following result follows from the first statement in Theorem~\ref{regtwinsubg}.

\begin{corollary}
\label{cor:pq}
Let $X$ be a weighted graph on $n$ vertices with false twin subgraphs $G$ and $H$ such that local $\epsilon$-uniform mixing occurs at vertex $u\in V(G)\cup V(H)$ relative to $A$. If $G$ is a weighted graph on $a$ vertices with an eigenvector $\bv=[\one_p,-\one_q,\zero_{a-p-q}]^T$ such that $\bv^T\be_u\neq 0$, then $n\leq 4(p+q)^2.$
\end{corollary}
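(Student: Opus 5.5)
The plan is to apply the first inequality of Theorem~\ref{regtwinsubg} directly to the given eigenvector $\bv=[\one_p,-\one_q,\zero_{a-p-q}]^T$ of $A(G)$. That theorem, via Lemma~\ref{lem:x-x}, lifts $\bv$ to the eigenvector $[\bv,-\bv,\zero]^T$ of $A(X)$. It then yields
\[
\sqrt{n}\,|v_u|\leq 2\sum_j |v_j|
\]
for the vertex $u\in V(G)\cup V(H)$ at which local $\epsilon$-uniform mixing occurs.

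One point needs care: the statement's hypothesis $\bv^T\be_u\neq 0$ is phrased for $u$ viewed as a vertex of $G$. If $u\in V(H)$, we read it through the bijection $f$. In the lifted vector $[\bv,-\bv,\zero]^T$ the entry at $u$ is then $-v_{f^{-1}(u)}$, whose modulus is the same as that of the corresponding entry of $\bv$. So in either case the relevant entry is nonzero, and the eigenvalue $\lambda$ lies in $\supp_{\be_u}$, as the argument of Theorem~\ref{lum} requires.

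Next we evaluate both sides. Since the entries of $\bv$ lie in $\{-1,0,1\}$ and the entry at $u$ is nonzero, we have $|v_u|=1$. Exactly $p+q$ entries of $\bv$ are nonzero, so $\sum_j|v_j|=p+q$. Substituting gives $\sqrt{n}\leq 2(p+q)$, and squaring yields $n\leq 4(p+q)^2$.

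The only step with any subtlety is the identification of the $u$th entry when $u\in V(H)$. This is handled by the sign symmetry of the lifted eigenvector, so I expect no real obstacle.
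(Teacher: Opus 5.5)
Your proposal is correct and is exactly the paper's argument: the corollary follows from the first inequality of Theorem~\ref{regtwinsubg} (which comes from Lemma~\ref{lem:x-x} and the argument of Theorem~\ref{lum}), with $|v_u|=1$ and $\sum_j|v_j|=p+q$, giving $\sqrt{n}\leq 2(p+q)$. The paper leaves implicit how the hypothesis $\bv^T\be_u\neq 0$ applies when $u\in V(H)$; you transport it through $f$ and the sign symmetry of $[\bv,-\bv,\zero]^T$, which is correct.
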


\begin{example}
\label{ex1}
In Figure \ref{fii}, $G$ is isomorphic to $P_3$, and $G$ and $H$ are false twin subgraphs of $X$ and $Y$. Given the labelling of $G$ in Figure \ref{fii}, we get that $\bv=[1,-1,0]^T$ is an eigenvector for $A(P_3)$. Applying Corollary~\ref{cor:pq} with $p=q=1$, we get that local $\epsilon$-uniform mixing does not occur at vertex $u\in\{1,2,7,6\}$ in $X$ (resp., $Y$) whenever $|V(Z)|>10$ (resp, $|V(Z_1)|+|V(Z_2)|>10$).
\end{example}

Next, we prove the other main result in this section that applies to graphs with true twin subgraphs.
\begin{figure}
\begin{multicols}{2}
\begin{center}
\begin{tikzpicture}[scale=.5,auto=left]
                       \tikzstyle{every node}=[circle, thick, fill=white, scale=0.6]
                       
		        \node[draw] (1) at (1.2,0) {$4$};		   
                \node[draw,minimum size=0.7cm, inner sep=0 pt](6) at (-4.9, 1.5) {$1$};
		        \node[draw,minimum size=0.7cm, inner sep=0 pt] (2) at (-3, 1.5) {$3$};
		        \node[draw,minimum size=0.7cm, inner sep=0 pt] (3) at (-1.1, 1.5) {$2$};
		        \node[draw,minimum size=0.7cm, inner sep=0 pt] (4) at (-3, -1.5) {$5$};
		        \node[draw,minimum size=0.7cm, inner sep=0 pt] (5) at (-1.1, -1.5) {$6$};
                \node[draw,minimum size=0.7cm, inner sep=0 pt] (7) at (-4.9, -1.5) {$7$};

    \node at (-3, 2.4) {$G$};
				\node at (-3,-2.4) {$H$};
				\node at (2.7,-1.5) {$Z$};
				
				\draw[dotted] (-3,1.5) ellipse (3 cm and 1.3 cm);
				\draw[dotted] (-3,-1.5) ellipse (3 cm and 1.3 cm);
				\draw[dotted] (2.7,0.1) circle (2.25 cm);
								
				\draw [thick, black!70] (1)--(3)--(2)--(6);
				\draw [thick, black!70] (5)--(4)--(7);
                \draw [thick, black!70] (1)--(5);

				\draw[thick, black!70] (1)..controls (3,4) and (7,-1)..(1);

				\end{tikzpicture}
                
\begin{tikzpicture}[scale=.5,auto=left]
                       \tikzstyle{every node}=[circle, thick, fill=white, scale=0.6]
                       
		        \node[draw] (1) at (3.1,0) {$4$};
                \node[draw] (6) at (-5.2,0) {$8$};
		        \node[draw,minimum size=0.7cm, inner sep=0 pt] (2) at (-3, 1.5) {$1$};
		        \node[draw,minimum size=0.7cm, inner sep=0 pt] (3) at (-1.1, 1.5) {$3$};
                \node[draw,minimum size=0.7cm, inner sep=0 pt] (7) at (0.8, 1.5) {$2$};
		        \node[draw,minimum size=0.7cm, inner sep=0 pt] (4) at (-3, -1.5) {$7$};
		        \node[draw,minimum size=0.7cm, inner sep=0 pt] (5) at (-1.1, -1.5) {$5$};
                \node[draw,minimum size=0.7cm, inner sep=0 pt] (8) at (0.8, -1.5) {$6$};
	
				\node at (-1.1, 2.4) {$G$};
				\node at (-1.1,-2.4) {$H$};
                \node at (4.5,-1.5) {$Z_1$};
                \node at (-6.7,-1.5) {$Z_2$};
				
				\draw[dotted] (-1.1,1.5) ellipse (3 cm and 1.3 cm);
				\draw[dotted] (-1.1,-1.5) ellipse (3 cm and 1.3 cm);
				\draw[dotted] (4.5,0.1) circle (2.25 cm);
                \draw[dotted] (-6.7,0.1) circle (2.25 cm);
								
				\draw [thick, black!70] (1)--(8)--(5);
                \draw [thick, black!70] (2)--(6)--(4);
				\draw [thick, black!70] (5)--(4);
                \draw [thick, black!70] (1)--(7)--(3)--(2);

				\draw[thick, black!70] (1)..controls (4.7,4) and (8.6,-1)..(1);
                \draw[thick, black!70] (6)..controls (-6.8,4) and (-10.8,-1)..(6);

				\end{tikzpicture}
				
\end{center}
\end{multicols}
\caption{\label{fii} Graphs $X$ (left) and $Y$ (right) with false twin subgraphs $G$ and $H$ isomorphic to $P_3$}
\end{figure}
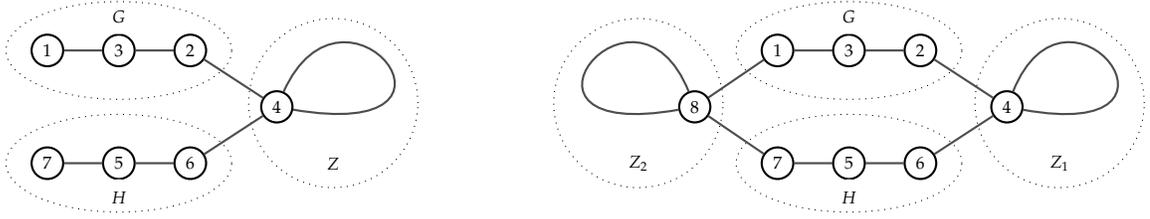

\begin{corollary}
\label{twinsubg}
Let $M\in\{A,L,Q\}$ and $X$ be a weighted graph on $n$ vertices with true twin subgraphs $G$ and $H$, each with $a$ vertices. If local $\epsilon$-uniform mixing at some vertex in $V(G)\cup V(H)$, then $n\leq 4a^2$.
\end{corollary}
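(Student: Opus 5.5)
The plan is to combine Lemma~\ref{1-10} with Corollary~\ref{Cor:01}. Since $G$ and $H$ are true twin subgraphs, each with $a$ vertices, Lemma~\ref{1-10} shows that after ordering $V(X)$ as $V(G)$, $V(H)$, and the remaining vertices, the vector $\bv=[\one_a,-\one_a,\zero]^T$ is an eigenvector of $M$ for each $M\in\{A,L,Q\}$. Its entries lie in $\{-1,0,1\}$, and exactly $r=2a$ of them are nonzero.

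Next, let $u\in V(G)\cup V(H)$ be a vertex at which local $\epsilon$-uniform mixing occurs. The $u$th entry of $\bv$ is $+1$ if $u\in V(G)$ and $-1$ if $u\in V(H)$, so $\bv^T\be_u\neq 0$ in either case. The hypotheses of Corollary~\ref{Cor:01} are therefore met, and it gives $n\leq r^2=(2a)^2=4a^2$.

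Alternatively, one can apply Theorem~\ref{lum} directly. It yields $\sqrt{n}\,|v_u|=\sqrt{n}\leq\sum_j|v_j|=2a$, and squaring gives the same bound.

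The only point that needs care is checking that Lemma~\ref{1-10} really supplies an eigenvector for all three matrices. For $M=A$ the eigenvalue is $k-\ell$, where $k$ is the common valency of $G$ and $H$ and $\ell$ is the valency of the regular bipartite graph between them. For $L$ and $Q$ the key fact is that $D$ is constant on $V(G)\cup V(H)$. This holds because the twin bijection $f$ gives matching weights to the outside vertices, and $G$, $H$ and the bipartite graph between them are all weighted-regular. Granting that lemma, the rest of the argument is immediate.
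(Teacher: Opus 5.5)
Your argument is the paper's proof: Lemma~\ref{1-10} followed by Corollary~\ref{Cor:01} with $r=2a$. For $M=A$ it is complete. With $V(H)$ ordered by $f$, the block form gives $A\bv=(k-\ell)\bv$ for $\bv=[\one_a,-\one_a,\zero]^T$, and $\bv^T\be_u=\pm 1$.

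\textbf{The gap.} It lies exactly in the step you singled out for $L$ and $Q$. The twin bijection only gives $A(X)_{w,x}=A(X)_{f(w),x}$ for each $w\in V(G)$ and each $x\in Z:=V(X)\setminus(V(G)\cup V(H))$. Hence $\deg w=\deg f(w)=k+\ell+z_w$, where $z_w=\sum_{x\in Z}A(X)_{w,x}$. Nothing in the definition makes $z_w$ independent of $w\in V(G)$. So $D$ need not be constant on $V(G)\cup V(H)$, and $D\bv=[D_1\one_a,-D_1\one_a,\zero]^T$ is a multiple of $\bv$ only when $z_w$ is constant on $V(G)$.

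\textbf{A counterexample to the step.} Take $X=P_5$ with vertices $1,2,3,4,5$ in path order, $G=X[\{1,2\}]$, $H=X[\{4,5\}]$, $f(1)=5$, $f(2)=4$. These are true twin subgraphs in the paper's sense: both are $K_2$, there are no edges between them, and $2$ and $4$ are both adjacent to $3$. For $\bv=\be_1+\be_2-\be_4-\be_5$ you do get $A\bv=\bv$. However, $L\bv=\be_2-\be_4$ and $Q\bv=2\be_1+3\be_2-3\be_4-2\be_5$, so $\bv$ is an eigenvector of neither $L$ nor $Q$. The same failure occurs in the left graph of Figure~\ref{fi}, where the two vertices of $G$ have degrees $1$ and $2$.

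\textbf{Consequences.} This defect is already in the paper's proof of Lemma~\ref{1-10}: its claim $D(X)\bv=(k+\ell+p)\bv$ makes the same unjustified assumption. So your proof reproduces the paper's, gap included. Your alternative via Theorem~\ref{lum} uses the same vector and does not avoid it. As written, the argument proves the corollary for $M=A$. For $M\in\{L,Q\}$ it works only under the extra hypothesis that every vertex of $G$ has the same total edge weight into $Z$, in which case $D\bv=(k+\ell+z)\bv$. That hypothesis holds automatically when $a=1$, so Corollary~\ref{twin} is unaffected. For $a\ge 2$ and $M\in\{L,Q\}$ without it, a different eigenvector or a different idea would be needed.
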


\begin{proof}
If $X$ contains a pair of true twin subgraphs, each with $a$ vertices, then by Lemma~\ref{1-10}, $[\one_a ,-\one_a ,\zero]^T$ is an eigenvector for $M\in\{A,L,Q\}$. Now, because $X$ admits local $\epsilon$-uniform mixing at some vertex, applying Corollary~\ref{Cor:01} with $r=2a$ yields the desired result.    
\end{proof}

\begin{figure}
\begin{multicols}{2}
\begin{center}
\begin{tikzpicture}[scale=.5,auto=left]
                       \tikzstyle{every node}=[circle, thick, fill=white, scale=0.6]
                       
		        \node[draw] (1) at (1.2,0) {$3$};		        
		        \node[draw,minimum size=0.7cm, inner sep=0 pt] (2) at (-3, 1.5) {$1$};
		        \node[draw,minimum size=0.7cm, inner sep=0 pt] (3) at (-1.1, 1.5) {$2$};
		        \node[draw,minimum size=0.7cm, inner sep=0 pt] (4) at (-3, -1.5) {$5$};
		        \node[draw,minimum size=0.7cm, inner sep=0 pt] (5) at (-1.1, -1.5) {$4$};		       

    \node at (-2.05, 2.1) {$G$};
				\node at (-2.05,-2.1) {$H$};
				\node at (2.7,-1.5) {$Z$};
				
				\draw[dotted] (-2,1.5) ellipse (2 cm and 1.1 cm);
				\draw[dotted] (-2,-1.5) ellipse (2 cm and 1.1 cm);
				\draw[dotted] (2.7,0.1) circle (2.25 cm);
								
				\draw [thick, black!70] (1)--(3)--(2);
				\draw [thick, black!70] (5)--(4);
                \draw [thick, black!70] (1)--(5);
				\draw [thick,dashed, black!70] (2)--(5);
                \draw [thick,dashed, black!70] (3)--(4);

				\draw[thick, black!70] (1)..controls (3,4) and (7,-1)..(1);

				\end{tikzpicture}
                
\begin{tikzpicture}[scale=.5,auto=left]
                       \tikzstyle{every node}=[circle, thick, fill=white, scale=0.6]
                       
		        \node[draw] (1) at (1.2,0) {$3$};
                \node[draw] (6) at (-5.2,0) {$6$};
		        \node[draw,minimum size=0.7cm, inner sep=0 pt] (2) at (-3, 1.5) {$1$};
		        \node[draw,minimum size=0.7cm, inner sep=0 pt] (3) at (-1.1, 1.5) {$2$};
		        \node[draw,minimum size=0.7cm, inner sep=0 pt] (4) at (-3, -1.5) {$5$};
		        \node[draw,minimum size=0.7cm, inner sep=0 pt] (5) at (-1.1, -1.5) {$4$};		       
	
				\node at (-2.05, 2.1) {$G$};
				\node at (-2.05,-2.1) {$H$};
                \node at (2.7,-1.5) {$Z_1$};
                \node at (-6.7,-1.5) {$Z_2$};
				
				\draw[dotted] (-2,1.5) ellipse (2 cm and 1.1 cm);
				\draw[dotted] (-2,-1.5) ellipse (2 cm and 1.1 cm);
				\draw[dotted] (2.7,0.1) circle (2.25 cm);
                \draw[dotted] (-6.7,0.1) circle (2.25 cm);
								
				\draw [thick, black!70] (1)--(3)--(2);
                \draw [thick, black!70] (2)--(6)--(4);
				\draw [thick, black!70] (5)--(4);
                \draw [thick, black!70] (1)--(5);
                \draw [thick,dashed, black!70] (2)--(4);
                \draw [thick,dashed, black!70] (3)--(5);

				\draw[thick, black!70] (1)..controls (3,4) and (7,-1)..(1);
                \draw[thick, black!70] (6)..controls (-6.8,4) and (-10.8,-1)..(6);

				\end{tikzpicture}
				
\end{center}
\end{multicols}
\caption{\label{fi} Graphs with true twin subgraphs $G$ and $H$ isomorphic to $K_2$; $G$ and $H$ remain true twin subgraphs after inserting the dashed edges}
\end{figure}
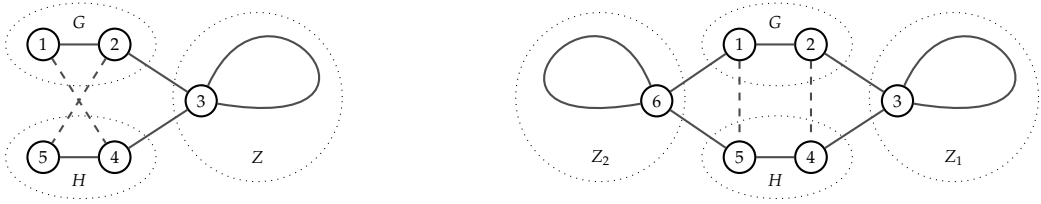

The next result is obtained from combining Theorem~\ref{regtwinsubg} and Corollary~\ref{twinsubg}.

\begin{corollary}
\label{corintro}
Let $M\in\{A,L,Q\}$ and $X$ be a weighted graph on $n$ vertices with (false or twin) twin subgraphs, each with $a$ vertices. If $\epsilon$-uniform mixing occurs in $X$, then $n\leq 4a^2$.
\end{corollary}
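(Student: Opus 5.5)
The plan is to split into the two cases allowed by the hypothesis: $G$ and $H$ are false twin subgraphs, or they are true twin subgraphs. In both cases the first step is to use the fact that $\epsilon$-uniform mixing in $X$ means that, for every $\epsilon>0$, a single time works for all vertices. Hence local $\epsilon$-uniform mixing occurs at every vertex of $X$, and in particular at every vertex of $V(G)\cup V(H)$. So the local results already proved apply at any vertex of the twin subgraphs we wish to choose.

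\emph{True twin subgraphs, with $M\in\{A,L,Q\}$.} Here I would invoke Corollary~\ref{twinsubg} directly. By Lemma~\ref{1-10}, the vector $[\one_a,-\one_a,\zero]^T$ is an eigenvector of $M$, and it has $2a$ nonzero entries, all in $\{\pm1\}$. Choosing any $u\in V(G)$ gives $\bv^T\be_u\neq 0$, so Corollary~\ref{Cor:01} with $r=2a$ yields $n\leq (2a)^2=4a^2$.

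\emph{False twin subgraphs.} This case needs $M=A$, which is the setting in which Lemma~\ref{lem:x-x} holds; I would read the statement's $M$ this way in this case. Theorem~\ref{regtwinsubg} already contains the conclusion, but I would spell out the step that makes it work. Take any eigenvector $\bx$ of $A(G)$ and let $w\in V(G)$ be an index where $|x_w|=\rho=\max_j|x_j|>0$. Since mixing is local at every vertex, it occurs at $w$. Apply Theorem~\ref{lum} to the eigenvector $[\bx,-\bx,\zero]^T$ of $A(X)$ from Lemma~\ref{lem:x-x}. This gives
\[
\sqrt{n}\,\rho\leq 2\sum_j|x_j|\leq 2a\rho,
\]
and dividing by $\rho$ gives $n\leq 4a^2$.

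The main point requiring care is the choice of vertex. The bound is only sharp enough when it is applied at a vertex where the eigenvector attains its maximum modulus. This is exactly why the global hypothesis of $\epsilon$-uniform mixing is needed rather than local mixing at an arbitrary vertex. I would also note that in the false twin case the hypothesis $M\in\{L,Q\}$ is not covered by Lemma~\ref{lem:x-x}, so that case must be restricted to the adjacency matrix, consistent with Theorem~\ref{regtwinsubg}.
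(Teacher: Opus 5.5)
Your proposal is the paper's proof. The paper obtains the corollary by combining Theorem~\ref{regtwinsubg}, with the same maximum-modulus choice of vertex, and Corollary~\ref{twinsubg}, which is Lemma~\ref{1-10} plus Corollary~\ref{Cor:01} with $r=2a$; your restriction of the false-twin case to $A$ is exactly what that combination covers. Your $L,Q$ caveat is aimed at the wrong case, though. For false twins, $u$ and $f(u)$ have the same degree in $X$, so $[\bx,-\bx,\zero]^T$ is an eigenvector of $L$ (resp.\ $Q$) for every eigenvector $\bx$ of $D_G-A(G)$ (resp.\ $D_G+A(G)$), where $D_G$ is the diagonal matrix of $X$-degrees on $V(G)$, and your argument goes through unchanged. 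For true twins, by contrast, Lemma~\ref{1-10} (cited by you and the paper alike) holds for $L,Q$ only if all vertices of $G$ have the same total edge weight into $V(X)\setminus(V(G)\cup V(H))$; for instance, $G=\{1,2\}$ and $H=\{4,5\}$ are true twin subgraphs of $P_5$, yet $[1,1,0,-1,-1]^T$ is not an eigenvector of $L(P_5)$.
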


\begin{example}
\label{ex2}
In Figure \ref{fi}, $G$ is isomorphic to $K_2$, and $G$ and $H$ are true twin subgraphs of $X$ and $Y$. Applying Corollary~\ref{twinsubg} with $a=2$, the following hold relative to $M\in\{A,L,Q\}$.
\begin{enumerate}
\item If $|V(Z)|>12$, then local $\epsilon$-uniform mixing does not occur at $u\in V(G)\cup V(H)$ in $X$. 
\item If $|V(Z_1)|+|V(Z_2)|>12$, then local $\epsilon$-uniform mixing does not occur at $u\in V(G)\cup V(H)$ in $Y$.
\end{enumerate}
Additionally, if a perfect matching is added between $G$ and $H$, then the two statements above remain true.
\end{example}

If we insert edges between $G$ and $H$, then Example~\ref{ex2}(1-2) remain true as long as the bipartite subgraph induced by these edges is regular. In this case, $G$ and $H$ continue to be true twin subgraphs of the resulting graph, and so Corollary~\ref{twinsubg} still applies. However, the addition of edges between $G$ and $H$ in Example~\ref{ex1} destroys the property that $G$ and $H$ are false twin subgraphs, so we do not know whether Example~\ref{ex1}(1-2) still hold since Corollary~\ref{cor:pq} no longer applies to the resulting graph.

Let $N(u)$ denote the neighborhood of vertex $u$ in $X$. Vertices $u$ and $v$ are \textit{twins} in $X$ if $N(u)\backslash\{v\}=N(v)\backslash\{u\}$ and the edges $\{u,w\}$ and $\{v,w\}$ have equal weights for all $w\notin V(X)\backslash\{u,v\}$ \cite{Kirkland2023}. In particular, adjacent twins and non-adjacent twins are \textit{true twins} and \textit{false twins}, respectively. Note that true and false twins are singleton true and false twin subgraphs, respectively. Thus, our definitions of true and false twin subgraphs generalize the notion of true and false twins. Moreover, we may also view false twins as singleton true twin subgraphs, where the subgraph induced by each vertex and the bipartite subgraph between them are both 0-regular. Applying Corollary~\ref{twinsubg} with $a=1$ yields the next result.

\begin{corollary}
\label{twin}
Let $X$ be a weighted graph on $n$ vertices containing a vertex $u$ with a twin. If local $\epsilon$-uniform mixing occurs at $u$ in $X$, then $n\leq 4$.
\end{corollary}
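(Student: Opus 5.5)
The plan is to reduce the statement to Corollary~\ref{Cor:01}, using the eigenvector $\be_u-\be_v$ for a twin $v$ of $u$. This is exactly the case $a=1$ of Corollary~\ref{twinsubg}, but I will verify directly that a twin pair fits that framework, since the singleton case needs a little care in Definition~\ref{deftw}.

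\textbf{Step 1.} Let $v$ be a twin of $u$. By definition, $N(u)\backslash\{v\}=N(v)\backslash\{u\}$, and for every $w\notin\{u,v\}$ the edges $\{u,w\}$ and $\{v,w\}$ have equal weight (or are both absent). Consequently the rows $u$ and $v$ of $A$ agree outside the columns $u,v$. Moreover $A_{u,v}=A_{v,u}=\omega$, where $\omega\geq 0$ is the weight of the edge $uv$ (zero for false twins), and $A_{u,u}=A_{v,v}=0$.

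\textbf{Step 2.} Set $\bx=\be_u-\be_v$. For any $w\notin\{u,v\}$ we get $(A\bx)_w=A_{w,u}-A_{w,v}=0$. In coordinate $u$ we get $(A\bx)_u=0-\omega=-\omega$, and in coordinate $v$ we get $(A\bx)_v=\omega$. Hence $A\bx=-\omega\bx$. The weighted degrees of $u$ and $v$ are equal, since both equal $\omega$ plus the common weights to the other neighbours. So $D\bx=d\bx$ with $d=\deg u$, and therefore $\bx$ is also an eigenvector of $L$ (eigenvalue $d+\omega$) and of $Q$ (eigenvalue $d-\omega$).

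This is the content of Lemma~\ref{1-10} with $a=1$. In that setting the one-vertex subgraphs are $0$-regular, and the bipartite graph between them is weighted $\omega$-regular when $\omega>0$ and $0$-regular when $\omega=0$.

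\textbf{Step 3.} The vector $\bx$ has entries in $\{-1,0,1\}$, has $r=2$ nonzero entries, and satisfies $\bx^T\be_u=1\neq 0$. Applying Corollary~\ref{Cor:01} then gives $n\leq r^2=4$, which is the claim.

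I see no substantive obstacle. The only point needing attention is Step 1: one should check that the twin definition really makes the $u$- and $v$-rows agree off $\{u,v\}$ for the weighted matrix. The paper's phrase ``$w\notin V(X)\backslash\{u,v\}$'' should be read as $w\notin\{u,v\}$. One should also confirm that equal degrees hold in the weighted sense, so that the $L$ and $Q$ cases go through.
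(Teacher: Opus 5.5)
Your proof is correct and is essentially the paper's argument: the paper obtains this statement as Corollary~\ref{twinsubg} with $a=1$, where Lemma~\ref{1-10} supplies the eigenvector $\be_u-\be_v$ and Corollary~\ref{Cor:01} with $r=2$ gives $n\leq 4$. Your explicit check that $u$ and $v$ have equal weighted degree is what makes the $L$ and $Q$ cases go through, and it is automatic for singletons; for larger twin subgraphs the degrees on $V(G)$ need not be constant, so verifying the singleton case directly, as you do, is the cleaner route.
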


\section{Periodicity}\label{sec:per}
 
We say that $\supp_{\bx}$ satisfies the \textit{ratio condition} if $\frac{\alpha-\beta}{\gamma-\zeta}\in\mathbb{Q}$ for all $\alpha,\beta,\gamma,\zeta\in \supp_{\bx}$ with $\gamma\neq\zeta$. The result below follows from \cite[Theorem 3.2]{godsil2025perfect}.

\begin{theorem}
\label{thm:ratiocon}
The vector $\bx\in\C^n$ is periodic if and only if $\supp_{\bx}$ satisfies the ratio condition. Moreover, if $M$ has integer entries, $\bx$ is rational and $|\supp_{\bx}|\geq 3$, then $\bx$ is periodic if and only if either (i) $\supp_{\bx}\subseteq\Z$, or (ii) each $\lambda_j\in \supp_{\bx}$ is of the form $\lambda_j=\frac{1}{2}(a+b_j\sqrt{\Delta})$, where $a,b_j,\Delta$ are integers and $\Delta>1$ is square-free.
\end{theorem}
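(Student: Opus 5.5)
We need to prove Theorem~\ref{thm:ratiocon}. It has two parts: the ratio-condition characterization of periodicity, and the integer/quadratic description when $M$ is integral and $\bx$ is rational.

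\textbf{First part.} Write $U(t)\bx=\sum_{\lambda\in\supp_{\bx}}e^{\ii t\lambda}E_\lambda\bx$. The vectors $E_\lambda\bx$ with $\lambda\in\supp_{\bx}$ are nonzero and mutually orthogonal. Hence $U(\tau)\bx=\gamma\bx$ holds if and only if $e^{\ii\tau\lambda}=\gamma$ for every $\lambda\in\supp_{\bx}$.

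Suppose this holds and fix $\lambda_0\in\supp_{\bx}$. Then $\tau(\lambda-\lambda_0)\in 2\pi\Z$ for all $\lambda\in\supp_{\bx}$. Taking quotients of these relations gives $\frac{\alpha-\beta}{\gamma-\zeta}\in\Q$ for all $\alpha,\beta,\gamma,\zeta\in\supp_{\bx}$ with $\gamma\neq\zeta$.

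Conversely, assume the ratio condition and $|\supp_{\bx}|\geq 2$; if $|\supp_{\bx}|=1$, $\bx$ is an eigenvector and periodicity is trivial. Fix distinct $\lambda_0,\lambda_1\in\supp_{\bx}$. Each difference $\lambda-\lambda_0$ is a rational multiple $q_\lambda(\lambda_1-\lambda_0)$. Let $D$ be a common denominator of the $q_\lambda$ and set $\tau=2\pi D/(\lambda_1-\lambda_0)$. Then all the phases $e^{\ii\tau\lambda}$ coincide, so $\bx$ is periodic.

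\textbf{Second part.} Assume $M$ has integer entries, $\bx$ is rational, and $|\supp_{\bx}|\geq 3$. The key algebraic input is that $\supp_{\bx}$ is closed under Galois conjugation. The reason is that each $E_\lambda$ is a polynomial in $M$ whose coefficients lie in $\Q(\lambda)$. For $\sigma\in\mathrm{Gal}(\overline{\Q}/\Q)$ we therefore get $\sigma(E_\lambda)=E_{\sigma(\lambda)}$, and since $\bx$ is rational, $E_{\sigma(\lambda)}\bx=\sigma(E_\lambda\bx)\neq\zero$. Each eigenvalue is an algebraic integer.

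Sufficiency is quick.
\begin{itemize}
\item In case (i), all differences are integers, so the ratio condition holds.
\item In case (ii), $\alpha-\beta=\frac{1}{2}(b_\alpha-b_\beta)\sqrt{\Delta}$. Ratios of such differences are rational, so again the ratio condition holds.
\end{itemize}

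For necessity, assume the ratio condition and fix distinct $\lambda_0,\lambda_1\in\supp_{\bx}$. Set $d=\lambda_1-\lambda_0$, so every $\lambda\in\supp_{\bx}$ has the form $\lambda_0+q_\lambda d$ with $q_\lambda\in\Q$.

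\emph{Step 1.} Summing over all $\lambda\in\supp_{\bx}$ gives $\sum\lambda=k\lambda_0+\big(\sum q_\lambda\big)d$, where $k=|\supp_{\bx}|$. The left side is Galois invariant, hence rational.

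\emph{Step 2.} Any $\sigma$ permutes $\supp_{\bx}$, so it preserves the set of differences $\{(q_\alpha-q_\beta)d\}$. Comparing the maximal difference $(\max q-\min q)d$ with its image, and using $|\supp_{\bx}|\geq3$ to exclude degenerate cases, we get $\sigma(d)=\pm d$. Hence $d^2\in\Q$.

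\emph{Step 3.} From Step 1, $\lambda_0$ is a rational combination of $1$ and $d$. It follows that every $\lambda\in\supp_{\bx}$ lies in $\Q(\sqrt{\Delta})$, where $\Delta$ is the square-free part of $d^2$.

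\emph{Step 4.} If $d$ is rational, every eigenvalue is a rational algebraic integer, hence an integer; this is case (i).

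\emph{Step 5.} Otherwise every $\lambda$ is a quadratic integer whose conjugate also lies in $\supp_{\bx}$. Since $\lambda=\lambda_0+q_\lambda d$ with $\lambda_0=c+c'd$ for rationals $c,c'$ (Step 3), all elements of $\supp_{\bx}$ share the same rational part $c$. Integrality of quadratic integers then forces the form $\frac{1}{2}(a+b_j\sqrt{\Delta})$ with integers $a,b_j$, which is case (ii).

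The main obstacle is Step 2: justifying $\sigma(d)=\pm d$ carefully, and thus showing that all eigenvalues share one common rational part. To be safe, I would cite \cite[Theorem 3.2]{godsil2025perfect} for this step, as the paper does.
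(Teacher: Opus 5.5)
Your proposal is correct, and it takes a different route from the paper only in the sense that the paper has no argument here at all: it imports the statement from \cite[Theorem 3.2]{godsil2025perfect}. You give a self-contained proof along the lines of Godsil's classical argument for periodic vertices. The spectral decomposition yields the ratio condition directly. For the arithmetic part you use three facts:
\begin{enumerate}
\item $\supp_{\bx}$ is closed under Galois conjugation. Your justification is right: $E_\lambda=q_\lambda(M)$ with $q_\lambda(x)=p(x)/\big((x-\lambda)p'(\lambda)\big)\in\Q(\lambda)[x]$, where $p$ is the minimal polynomial of $M$.
\item The sum of the elements of the support is rational, which places all of them in $\Q(d)$ with a common rational part.
\item $d^2\in\Q$.
\end{enumerate}
What your route buys is an actual proof where the paper has only a pointer. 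It also shows that the hypothesis $|\supp_{\bx}|\geq 3$ is never really used: a Galois-closed support of size at most $2$ already falls into case (i) or (ii).

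The step you flagged as the obstacle, Step 2, does not need the citation. Since $\sigma(d)=\sigma(\lambda_1)-\sigma(\lambda_0)$ is itself a difference of two elements of $\supp_{\bx}$, you get $\sigma(d)=r_\sigma d$ with $r_\sigma\in\Q\setminus\{0\}$. Hence $\sigma$ acts on the finite set of real differences $\{\alpha-\beta\}$ as multiplication by $r_\sigma$, and a bijection of a finite set of reals of this form forces $|r_\sigma|=1$. Alternatively, $\sum_{\alpha,\beta}(\alpha-\beta)^2=d^2\sum_{\alpha,\beta}(q_\alpha-q_\beta)^2$ is a symmetric function of a Galois-closed set, hence rational.

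Two small points remain:
\begin{itemize}
\item In the converse of the first part, choose $\lambda_1>\lambda_0$ so that $\tau>0$.
\item In Step 5, say explicitly where the integrality comes from. $2c\in\Z$ is the trace of any $\lambda=c+t_\lambda\sqrt{\Delta}$ with $t_\lambda\neq 0$. Then $4t_\lambda^2\Delta=(2c)^2-4N(\lambda)\in\Z$, and square-freeness of $\Delta$ gives $2t_\lambda\in\Z$.
\end{itemize}
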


The following result can be established using the same argument used in 
\cite[Theorem 3.4]{Godsil2013}.

\begin{theorem}
\label{algent}
Let $M$ be an integer matrix and $\bx\in\C^n$ be a vector whose entries are all algebraic numbers. If the entries of $U(\tau)\bx$ are all algebraic numbers for some $\tau>0$, then the ratio of any two nonzero eigenvalues in $\supp_{\bx}$ is rational and $\bx$ is periodic. 
\end{theorem}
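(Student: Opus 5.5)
We follow the classical argument of Godsil, Mullin and Roy. Write $\bx=\sum_{\lambda\in\supp_{\bx}}E_\lambda\bx$, so that
\[
U(\tau)\bx=\sum_{\lambda\in\supp_{\bx}}e^{\ii\tau\lambda}E_\lambda\bx .
\]
Since $M$ is an integer matrix, its characteristic polynomial has integer coefficients. Hence every eigenvalue is an algebraic integer, and each projection $E_\lambda$ is a polynomial in $M$ whose coefficients lie in the splitting field $\mathbb{F}$ of that polynomial. So each $E_\lambda$ has entries in $\mathbb{F}$, and $E_\lambda\bx$ has algebraic entries.

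The first step is to show that each $e^{\ii\tau\lambda}$ with $\lambda\in\supp_{\bx}$ is algebraic. Multiplying $U(\tau)\bx$ by $E_\lambda$ isolates a single term: $E_\lambda U(\tau)\bx=e^{\ii\tau\lambda}E_\lambda\bx$. The left side has algebraic entries, being an algebraic matrix times an algebraic vector. Because $E_\lambda\bx\neq\zero$, we can pick a coordinate $j$ with $(E_\lambda\bx)_j\neq0$. Dividing, $e^{\ii\tau\lambda}=(E_\lambda U(\tau)\bx)_j/(E_\lambda\bx)_j$ is algebraic.

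The second step is to prove the ratio claim using the Gelfond--Schneider theorem; this is the main obstacle. Take two nonzero $\lambda,\mu\in\supp_{\bx}$ and suppose $\mu/\lambda\notin\Q$. Then $\mu/\lambda$ is algebraic and irrational, and $\alpha=e^{\ii\tau\lambda}$ is algebraic. We need $\alpha\neq 0,1$; if $\alpha=1$, replace $\lambda$ by $\mu$, and if both equal $1$ handle the trivial case separately. Gelfond--Schneider then says $\alpha^{\mu/\lambda}=e^{\ii\tau\mu}$ is transcendental, for a suitable branch of the logarithm. This contradicts the first step. The delicate points are the choice of branch, since $\tau\lambda$ is a specific logarithm of $\alpha$ and the theorem applies to any nonzero branch, and the degenerate case where $e^{\ii\tau\lambda}=1$. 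If $e^{\ii\tau\lambda}=1$ with $\lambda\neq0$, then $\tau\lambda\in 2\pi\Z\setminus\{0\}$. In that case we use instead the Lindemann--Weierstrass theorem applied to $e^{\ii\tau\mu}$: the exponent $\ii\tau\mu=2\pi k\ii\,\mu/\lambda$ makes $\pi$ appear. We would simply note that $e^{\ii\tau\mu}=e^{2\pi \ii k\mu/\lambda}$ is algebraic, and by Gelfond--Schneider with base $-1=e^{\ii\pi}$ and exponent $2k\mu/\lambda$, this forces $\mu/\lambda\in\Q$.

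The third step is periodicity. If $0\notin\supp_{\bx}$, all ratios are rational, so $\supp_{\bx}\subseteq\lambda_0\Q$ for a fixed $\lambda_0$. Then every difference quotient $(\alpha-\beta)/(\gamma-\zeta)$ is rational, and Theorem~\ref{thm:ratiocon} gives that $\bx$ is periodic. If $0\in\supp_{\bx}$, the nonzero eigenvalues are still rational multiples of a common $\lambda_0$. The eigenvalue $0=0\cdot\lambda_0$ also fits this pattern, so the ratio condition again holds and $\bx$ is periodic. If $|\supp_{\bx}|=1$, then $\bx$ is an eigenvector and is trivially periodic.
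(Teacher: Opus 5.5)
Your proposal is correct and is essentially the argument the paper invokes from Godsil, Mullin and Roy. You show $e^{\ii\tau\lambda}=(E_\lambda U(\tau)\bx)_j/(E_\lambda\bx)_j$ is algebraic, use Gelfond--Schneider to force rational ratios, and conclude periodicity from the ratio condition (Theorem~\ref{thm:ratiocon}). Your treatment of the case $e^{\ii\tau\lambda}=1$ (by swapping $\lambda$ and $\mu$, or by using base $-1$) is sound; the passing mention of Lindemann--Weierstrass plays no role and can be deleted.
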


\begin{corollary}
\label{cor:lumper}
Let $M$ be an integer matrix and $\bx$ be a vector with all rational entries. If all entries of $U(\tau)\bx$ are algebraic numbers for some $\tau>0$, then $\bx$ is periodic, and 
the following hold whenever $|\supp_{\bx}|\geq 3$.
\begin{enumerate}
\item If $M\in\{L,Q\}$, then $\supp_{\bx}\subseteq\Z$. 
\item If $M=A$, then either (i) $\supp_{\bx}\subseteq\Z$, or (ii) $\supp_{\bx}$ is closed under multiplication of $-1$ and each $\lambda_j\in\supp_{\bx}$ is of the form $\pm b_j\sqrt{\Delta}$, where $b_j$ and $\Delta$ are integers and $\Delta>1$ is square-free. Additionally, if $\bx$ is not orthogonal to the Perron eigenvector of $A$, then $X$ is bipartite in (ii).
\end{enumerate} 
\end{corollary}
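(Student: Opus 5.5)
Periodicity of $\bx$ comes directly from Theorem~\ref{algent}, since a vector with rational entries has algebraic entries. The real work is in items 1 and 2. For these I would invoke Theorem~\ref{thm:ratiocon}: $M$ is an integer matrix, $\bx$ is rational, and $|\supp_{\bx}|\geq 3$, so either (i) $\supp_{\bx}\subseteq\Z$, or (ii) every $\lambda_j\in\supp_{\bx}$ has the form $\frac12(a+b_j\sqrt{\Delta})$ with a common $a$ and square-free $\Delta>1$. I would then sharpen alternative (ii) using the extra information from Theorem~\ref{algent}: the ratio of any two nonzero eigenvalues in $\supp_{\bx}$ is rational.

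\textbf{Forcing $a=0$ in case (ii).} Pick two eigenvalues $\lambda_1,\lambda_2\in\supp_{\bx}$ with $b_1\neq b_2$; they exist because the eigenvalues are distinct. If both are nonzero, rationality of $\lambda_1/\lambda_2=(a+b_1\sqrt\Delta)/(a+b_2\sqrt\Delta)$ with $a\neq 0$ gives $ab_2=ab_1$ after rationalizing, a contradiction; hence $a=0$. Since $|\supp_{\bx}|\ge 3$, at least two eigenvalues in the support are nonzero. Their $b_j$ are distinct, so the argument applies, and $a=0$ with each $\lambda_j=\frac{b_j}{2}\sqrt\Delta$. The factor $\tfrac12$ can be absorbed: since $\lambda_j^2=b_j^2\Delta/4$ must be an algebraic integer, it is an integer. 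For square-free $\Delta$ this forces $b_j$ even, or else $\Delta\equiv 0\pmod 4$, which is impossible. So $\lambda_j=\pm b_j'\sqrt\Delta$ with $b_j'$ an integer.

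\textbf{Closure under negation.} The minimal polynomial of $\lambda=b\sqrt\Delta$ over $\Q$ is $x^2-b^2\Delta$, so $-\lambda$ is a Galois conjugate of $\lambda$. I would show that the support of a rational vector is closed under the Galois action: for $\sigma\in\mathrm{Gal}$, applying $\sigma$ entrywise to $E_\lambda$ gives $E_{\sigma(\lambda)}$, because $E_\lambda$ is a polynomial in the integer matrix $M$ with coefficients in $\Q(\lambda)$. Hence $E_\lambda\bx\neq\zero$ implies $E_{\sigma(\lambda)}\bx=\sigma(E_\lambda\bx)\neq\zero$, and $\supp_{\bx}$ is closed under $\lambda\mapsto-\lambda$.

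\textbf{Item 1.} For $M\in\{L,Q\}$ the matrix is positive semidefinite. Case (ii) would put both $\lambda$ and $-\lambda$, with $\lambda>0$, in the spectrum, which is impossible. (A zero eigenvalue cannot make up the rest, since $|\supp_{\bx}|\ge3$ forces some nonzero $b_j$.) Therefore only case (i) survives, and $\supp_{\bx}\subseteq\Z$.

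\textbf{Bipartiteness in item 2(ii).} If $\bx$ is not orthogonal to the Perron eigenvector, then the spectral radius $\rho$ lies in $\supp_{\bx}$. By closure under negation, $-\rho$ is also an eigenvalue of $A$. For a connected graph, $-\rho\in\spec(A)$ holds exactly when $X$ is bipartite, by Perron--Frobenius.

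The step I expect to be the main obstacle is the Galois-closure argument, specifically justifying that $\sigma$ applied to $E_\lambda$ yields $E_{\sigma(\lambda)}$. I would handle it by writing $E_\lambda=\prod_{\mu\neq\lambda}(M-\mu I)/(\lambda-\mu)$ over the full spectrum and noting that $\sigma$ permutes the spectrum of the integer matrix $M$.
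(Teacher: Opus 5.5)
Your proposal is correct and follows the paper's route. You combine Theorem~\ref{thm:ratiocon} with the rational-ratio conclusion of Theorem~\ref{algent} to force $a=0$, then use closure of $\supp_{\bx}$ under algebraic conjugation together with positive semidefiniteness to exclude case (ii) for $L,Q$, and finally use Perron--Frobenius for bipartiteness. The differences are minor: you force $a=0$ by comparing two distinct nonzero support eigenvalues rather than an eigenvalue and its conjugate, and you justify two points the paper leaves implicit, namely the Galois closure of $\supp_{\bx}$ and the removal of the factor $\tfrac12$ via the algebraic-integer argument.
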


\begin{proof}
Let $|\supp_{\bx}|\geq 3$.
Since $M$ has integer entries and $\bx$ is rational, Theorem~\ref{thm:ratiocon} and Theorem~\ref{algent} together imply that (i) $\supp_{\bx}\subseteq\Z$, or (ii) each $\lambda_j\in \supp_{\bx}$ is of the form $\lambda_j=\frac{1}{2}(a+b_j\sqrt{\Delta})$, where $a,b_j,\Delta$ are integers and $\Delta>1$ is square-free. Now, suppose $M\in\{L,Q\}$. Using the fact that the ratio of any two nonzero eigenvalues in $\supp_{\bx}$ is rational and $\supp_{\bx}$ is closed under taking algebraic conjugates, (ii) implies that $\frac{a+b_j\sqrt{\Delta}}{a-b_j\sqrt{\Delta}}\in \Q$. This is only possible when $a=0$, in which case $\pm b_j\sqrt{\Delta}$ are eigenvalues of $M$. Since $M$ in this case is positive semidefinite, all its eigenvalues are nonnegative, a contradiction. Thus (ii) cannot happen, and we get 1. For $M=A$, the same argument implies that (i) $\supp_{\bx}\subseteq\Z$, or (ii) each $\lambda_j\in \supp_{\bx}$ is of the form $\lambda_j=\pm b_j\sqrt{\Delta}$. In particular, if (ii) holds and $\bx$ is not orthogonal to the Perron eigenvector of $A$, then the Perron eigenvalue and its negative belong to $\supp_{\bx}$, which implies that $X$ is bipartite.
\end{proof}

By virtue of Corollary~\ref{charlum}(2), taking $\bx=\be_u$ in Theorem~\ref{algent} and Corollary~\ref{cor:lumper} yields the following result.

\begin{corollary}
\label{algent1}
Let $M$ be an integer matrix. If local uniform mixing occurs at $u$ with a target state $\bmu$ that has all algebraic entries, then $u$ is periodic and conditions 1 and 2 in Corollary~\ref{cor:lumper} hold with $\bx=\be_u$.
\end{corollary}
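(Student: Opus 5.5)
We take $\bx=\be_u$ and reduce everything to Theorem~\ref{algent} and Corollary~\ref{cor:lumper}. Suppose local uniform mixing occurs at $u$ at time $\tau>0$ with target state $\bmu$ whose entries are all algebraic. By Corollary~\ref{charlum}(2), $\sqrt{n}\,U(\tau)\be_u=\bmu$, so
\[
U(\tau)\be_u=\tfrac{1}{\sqrt{n}}\bmu .
\]
Since $\sqrt{n}$ is algebraic and algebraic numbers form a field, every entry of $U(\tau)\be_u$ is algebraic. The vector $\be_u$ has rational (hence algebraic) entries, and $M$ is an integer matrix.

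Theorem~\ref{algent} then applies with $\bx=\be_u$. It tells us that the ratio of any two nonzero eigenvalues in $\supp_{\be_u}$ is rational and that $\be_u$ is periodic, that is, $u$ is periodic.

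Next we invoke Corollary~\ref{cor:lumper}, whose hypotheses are an integer matrix $M$, a rational vector $\be_u$, and algebraic entries of $U(\tau)\be_u$. This gives the dichotomy in its conditions 1 and 2 whenever $|\supp_{\be_u}|\ge 3$. When $|\supp_{\be_u}|\le 2$, the conditions in Corollary~\ref{cor:lumper} are vacuous, so nothing more needs to be shown.

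The only point needing care is that $\tau>0$, as Theorem~\ref{algent} requires. If local uniform mixing happened at $\tau<0$, we would use $U(-\tau)\be_u=\overline{U(\tau)\be_u}$ (valid because $M$ is real symmetric, so $U(\tau)$ is symmetric with $\overline{U(\tau)}=U(-\tau)$). Its entries are complex conjugates of algebraic numbers and hence algebraic, so we may assume $\tau>0$. The case $\tau=0$ is impossible for $n>1$, since $U(0)\be_u=\be_u$ is not uniform. With this, the proof is a direct chaining of the earlier results and has no real obstacle.
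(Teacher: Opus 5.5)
Your proposal is correct and follows the paper's own argument. The paper likewise uses Corollary~\ref{charlum}(2) to obtain $U(\tau)\be_u=\tfrac{1}{\sqrt{n}}\bmu$, whose entries are algebraic, and then applies Theorem~\ref{algent} and Corollary~\ref{cor:lumper} with $\bx=\be_u$. Your extra handling of $\tau<0$ is valid but not needed, since the paper defines local uniform mixing with $\tau>0$.
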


\begin{lemma}
\label{lem:sup}
If $u\in V(X)$, then $\supp_{u}\cap \supp_{v}$ contains a nonzero eigenvalue for some vertex $v\neq u$.
\end{lemma}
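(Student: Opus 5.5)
We want to show that for each vertex $u$ of $X$ there is a vertex $v\neq u$ and a nonzero eigenvalue $\lambda$ with $E_\lambda\be_u\neq\zero$ and $E_\lambda\be_v\neq\zero$. Here $\supp_u$ abbreviates $\supp_{\be_u}$, and $X$ is connected with $n\geq 2$ vertices. The plan is to exploit the off-diagonal entries of $M$ at $u$ through the spectral decomposition: since $u$ has a neighbour $v$, we have $M_{u,v}\neq 0$. We then write
\begin{center}
$M_{u,v}=\be_u^TM\be_v=\displaystyle\sum_{\lambda\in\spec(M)}\lambda\,(E_\lambda)_{u,v}=\sum_{\lambda\in\spec(M)}\lambda\,(E_\lambda\be_u)^T(E_\lambda\be_v).$
\end{center}

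First we pick $v$ adjacent to $u$, which exists by connectedness; this is the only place $n\geq 2$ and connectivity are used, in line with the standing assumptions. Since $M$ respects adjacency, $M_{u,v}\neq 0$ for $M\in\{A,L,Q\}$, because $L_{u,v}=-A_{u,v}$ and $Q_{u,v}=A_{u,v}$. Next we observe that the term for $\lambda=0$ vanishes because of the factor $\lambda$. The terms with $E_\lambda\be_u=\zero$ or $E_\lambda\be_v=\zero$ also vanish. Hence
\begin{center}
$0\neq M_{u,v}=\displaystyle\sum_{\lambda\in(\supp_u\cap\supp_v)\setminus\{0\}}\lambda\,(E_\lambda)_{u,v},$
\end{center}
so the index set $(\supp_u\cap\supp_v)\setminus\{0\}$ must be nonempty. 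This gives exactly the claimed nonzero common eigenvalue.

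There is no real obstacle here. The only point needing care is confirming that the relevant off-diagonal entry of $M$ is nonzero, which holds for all three matrices since edge weights are positive. An alternative, should one prefer to avoid adjacency, would be to use some power $M^r$ with $(M^r)_{u,v}\neq 0$, where $r\geq 1$ is the distance from $u$ to $v$. The same computation with $\lambda^r$ in place of $\lambda$ then works for any vertex $v\neq u$.
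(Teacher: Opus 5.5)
Your proof is correct and is essentially the paper's argument made explicit. The paper argues by contradiction: if $\supp_u$ met the other supports trivially, or only in $0$ (since $0\cdot E_0$ contributes nothing entrywise), then $M$ would be permutation similar to a block-diagonal matrix, contradicting irreducibility. That is exactly the vanishing of $M_{u,w}=\sum_\lambda \lambda\,(E_\lambda)_{u,w}$ which you use directly at a neighbour $v$ of $u$. Fixing $v$ adjacent to $u$ from the outset is cleaner, because it avoids the paper's somewhat loose step from a particular $v$ with $\supp_u\cap\supp_v\neq\varnothing$ to $\supp_u\cap\supp_v\neq\{0\}$. Your $M^r$ remark also yields the conclusion for every $v\neq u$. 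It works for $L$ too, since only shortest $u$--$v$ walks contribute and they all carry the sign $(-1)^r$.
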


\begin{proof}
Let $u\in V(X)$. If $\supp_{u}\cap \supp_{w}=\varnothing$ for all $w\in V(X)\backslash\{v_1\}$, then $M$ is permutation similar to a block-diagonal matrix, a contradiction to the irreducibility of $M$. So there exists $v\in V(X)\backslash \{u\}$ such that $S:=\supp_{u}\cap \supp_{v}\neq \varnothing$. If $0E_0$ exists as a term in the spectral decomposition of $A$, then it does not contribute to $A$ entrywise. Thus, it must be that $S\neq \{0\}$. Otherwise, $M$ is again permutation similar to a block-diagonal matrix, a contradiction. Thus, there is some $\lambda\in S$ such that $\lambda\neq 0$.
\end{proof}

Using Lemma~\ref{lem:sup}, we reprove the following result that first appeared in \cite[Theorem 3.4]{Godsil2013}.

\begin{corollary}
Let $M$ be an integer matrix. If all entries of $U(\tau)$ at some $\tau>0$ are algebraic numbers, then the ratio of any two nonzero eigenvalues in $\spec(M)$ is rational.
\end{corollary}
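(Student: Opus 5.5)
The plan is to apply Theorem~\ref{algent} column by column, with $\bx=\be_u$ for each vertex $u$, and then use Lemma~\ref{lem:sup} to chain the resulting rationality statements across the whole spectrum.

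\emph{Per-vertex rationality.} If all entries of $U(\tau)$ are algebraic, then for each $u\in V(X)$ the vector $U(\tau)\be_u$ has algebraic entries. Since $\be_u$ is rational, Theorem~\ref{algent} applies. It gives that the ratio of any two nonzero eigenvalues in $\supp_{\be_u}$ is rational. For each $u$, let $S_u$ be the set of nonzero eigenvalues in $\supp_{\be_u}$. Then all elements of $S_u$ are pairwise rational multiples of one another.

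\emph{Chaining the supports.} Next, every eigenvalue of $M$ lies in some $\supp_{\be_u}$, because $\sum_u (E_\lambda)_{u,u}=m_\lambda>0$. Hence the union of the $S_u$ is the set of all nonzero eigenvalues. Consider the graph $\Gamma$ on $V(X)$ in which $u\sim v$ when $S_u\cap S_v\neq\varnothing$. If $u\sim v$, then all elements of $S_u\cup S_v$ are rational multiples of a common nonzero eigenvalue, so they are pairwise rationally related. It therefore suffices to show that $\Gamma$ is connected, since rational relatedness is transitive along paths.

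\emph{Connectivity of $\Gamma$.} Lemma~\ref{lem:sup} only gives each $u$ one neighbour, so I would strengthen its argument instead of citing it as is. Suppose $V(X)=W\cup W'$ is a nontrivial partition with no eigenvalue shared between $\bigcup_{w\in W}S_w$ and $\bigcup_{w'\in W'}S_{w'}$. For $w\in W$ and $w'\in W'$,
\[
M_{w,w'}=\sum_{\lambda\neq 0}\lambda (E_\lambda)_{w,w'}.
\]
Each term vanishes, because $(E_\lambda)_{w,w'}=(E_\lambda\be_{w'})^T(E_\lambda\be_w)$ is nonzero only if $\lambda\in\supp_{\be_w}\cap\supp_{\be_{w'}}$. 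So $M$ is block diagonal with respect to $W,W'$. This contradicts connectivity of $X$, which we assume throughout (irreducibility of $M$, as in Lemma~\ref{lem:sup}). Hence $\Gamma$ is connected, and the result follows.

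The main obstacle is this connectivity step. Lemma~\ref{lem:sup} as stated gives only a local neighbour, so its block-diagonal argument has to be rerun for an arbitrary cut rather than a single vertex. A secondary point is the case where $\spec(M)$ has at most one nonzero eigenvalue; there the statement is vacuous.
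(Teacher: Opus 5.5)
Your proof is correct and follows the same route as the paper. You apply Theorem~\ref{algent} to each column $U(\tau)\be_u$ and then chain the supports together using the block-diagonal argument behind Lemma~\ref{lem:sup}; your explicit cut version (connectivity of $\Gamma$) is exactly what the paper's inductive step ``applying the same argument, we get another vertex $v_3$ with $T_1\cap\supp_{v_3}\neq\varnothing$'' silently relies on, and working with the nonzero parts $S_u$ also covers the paper's unstated need for the shared eigenvalue $\lambda_2$ to be nonzero.
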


\begin{proof}
Our assumption and Theorem~\ref{algent} imply that the ratio of any two nonzero eigenvalues in $\supp_{\be_u}$ is rational for all $u\in V(X)$. Fix $v_1\in V(X)$. By Lemma~\ref{lem:sup}, there exists $v_2\in V(X)\backslash \{v_1\}$ such that $S_1:=\supp_{v_1}\cap \supp_{v_2}$ contains an eigenvalue $\lambda_1\neq 0$. Thus, each $\theta\in T_1:=\supp_{v_1}\cup\supp_{v_2}$ can be expressed as $\theta=r_1\lambda_1$, for some $r_1\in\mathbb{Q}$. Applying the same argument, we get another vertex $v_3\neq v_1,v_2$ such that $S_2:=T_1\cap\supp_{v_3}\neq \varnothing$. If $\lambda_2\in S_2$, then each $\theta\in T_2:=T_1\cup\supp_{v_3}$ can be written as $\theta=r_2\lambda_2$ for some $r_2\in\mathbb{Q}$. But as $\lambda_2=r_1\lambda_1$, we get $\theta=r_2r_1\lambda_1$ for some $r_2,r_1\in\mathbb{Q}.$ Arguing inductively, we get that each $\theta\in T_{n-1}=\bigcup_{u\in V(X)}\supp_{\be_u}=\spec(M)$ can be written as $\theta=r_{n-1}\cdots r_1\lambda_1$ for some $r_1,\ldots,r_{n-1}\in\mathbb{Q}$. Therefore, ratio of any two nonzero eigenvalues in $\spec(M)$ is rational.
\end{proof}

Adapting the proof of Corollary~\ref{cor:lumper} yields the next result.
\begin{corollary}
\label{cor:lumper1}
Let $M$ be an integer matrix. If all entries of $U(\tau)$ at some $\tau>0$ are algebraic numbers, then $X$ is periodic, and and 
the following statements hold.
\begin{enumerate}
\item If $M\in\{L,Q\}$, then $\spec(M)\subseteq\Z$. 
\item If $M=A$, then either (i) $\spec(M)\subseteq\Z$, or (ii) $X$ is bipartite, and each $\lambda_j\in\spec(M)$ is of the form $\pm b_j\sqrt{\Delta}$, where $b_j$ and $\Delta$ are integers and $\Delta>1$ is square-free.
\end{enumerate} 
\end{corollary}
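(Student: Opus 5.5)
The plan is to assemble the vertex-wise information into global information about $\spec(M)$. If all entries of $U(\tau)$ are algebraic, then for every vertex $u$ the vector $U(\tau)\be_u$ has algebraic entries and $\be_u$ is rational. Theorem~\ref{algent} then says that every $\be_u$ is periodic and that the ratio of any two nonzero eigenvalues in $\supp_{\be_u}$ is rational. By the preceding corollary (the reproof of \cite[Theorem 3.4]{Godsil2013}, which uses Lemma~\ref{lem:sup}), the ratio of any two nonzero eigenvalues in all of $\spec(M)=\bigcup_u\supp_{\be_u}$ is rational.

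From this the ratio condition holds for $\spec(M)$ itself: if all nonzero eigenvalues are rational multiples of a single $\lambda_1$, then every difference $\alpha-\beta$ is too. Theorem~\ref{thm:ratiocon} then yields a single time at which $U$ is a scalar multiple of the identity, so every vertex is periodic at the same time and $X$ is periodic.

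For the arithmetic description, I would redo the argument of Corollary~\ref{cor:lumper}, replacing $\supp_{\bx}$ by $\spec(M)$. Since $M$ is an integer matrix, $\spec(M)$ is closed under algebraic conjugation. Take a nonzero eigenvalue $\lambda$ with a conjugate $\lambda'\neq\lambda$. Because $\lambda/\lambda'\in\Q$, applying a Galois automorphism shows that all conjugates of $\lambda$ differ by rational factors whose product is rational. I expect this forces $\lambda$ to have degree at most $2$ and to be of the form $b\sqrt{\Delta}$ with $\Delta$ square-free. Moreover, fixing one irrational eigenvalue $\lambda_1=b_1\sqrt{\Delta}$, every other nonzero eigenvalue is a rational multiple of it. So either all eigenvalues are rational, hence integers as algebraic integers, or every nonzero eigenvalue is $\pm b_j\sqrt{\Delta}$ with the same $\Delta$ (and integer $b_j$ after checking algebraic integrality). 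The degenerate case $|\spec(M)|\le 2$ needs separate but easy treatment.

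For $M\in\{L,Q\}$, positive semidefiniteness rules out the second alternative, since $-b\sqrt\Delta$ would also be an eigenvalue; this gives item 1. For $M=A$ in the irrational case, the spectral radius $\rho$ is an eigenvalue, and so is its conjugate $-\rho$. A connected graph with $-\rho\in\spec(A)$ is bipartite by Perron–Frobenius, which gives item 2(ii). The main obstacle is the Galois step showing that rational ratios among nonzero eigenvalues force the form $\pm b\sqrt{\Delta}$ with a common $\Delta$. I would handle it by noting that the field $\Q(\lambda_1)$ contains every eigenvalue and that $\lambda_1^2=\lambda_1\lambda_1'\cdot(\lambda_1/\lambda_1')$ is rational.
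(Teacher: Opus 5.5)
\textbf{There is a gap in the arithmetic core.} Your periodicity step and your closing steps follow the paper, which simply says to adapt the proof of Corollary~\ref{cor:lumper}. Those steps are: rational ratios on all of $\spec(M)$ via the preceding corollary, then the ratio condition, then positive semidefiniteness for $L,Q$, and $-\rho\in\spec(A)$ with Perron--Frobenius for $A$. The paper gets the form $\frac12(a+b_j\sqrt{\Delta})$ for free from the second part of Theorem~\ref{thm:ratiocon}. It then only needs that $\frac{a+b_j\sqrt\Delta}{a-b_j\sqrt\Delta}\in\Q$ forces $a=0$. You instead derive the form directly from Galois theory, and that is the step you have not established. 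Your identity $\lambda_1^2=\lambda_1\lambda_1'\cdot(\lambda_1/\lambda_1')$ only helps if $\lambda_1\lambda_1'\in\Q$. But $\lambda_1\lambda_1'=\lambda_1^2/(\lambda_1/\lambda_1')$, so this is equivalent to $\lambda_1^2\in\Q$, which is exactly what you want to prove. For an eigenvalue of degree $d>2$, nothing you have said makes the product of two conjugates rational. Likewise, ``all conjugates differ by rational factors whose product is rational'' is not yet a bound on $d$.

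\textbf{The missing idea is short.} Let $\lambda\neq 0$ have conjugates $\lambda^{(k)}=r_k\lambda$ for $k=1,\dots,d$. These are nonzero eigenvalues by closure under conjugation, so the $r_k$ are distinct elements of $\Q^\times$ with $r_1=1$. Let $\sigma$ be a Galois automorphism with $\sigma(\lambda)=r_k\lambda$. Then $\sigma(r_j\lambda)=r_jr_k\lambda$ must again be a conjugate. Hence $R=\{r_1,\dots,r_d\}$ is a finite multiplicative subgroup of $\Q^\times$, so $R\subseteq\{\pm1\}$. It follows that $d\le 2$, and if $d=2$ the conjugate is $-\lambda$. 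Then $\lambda^2$ is a rational algebraic integer, so $\lambda=\pm b\sqrt\Delta$. (Equivalently: the norm gives $\lambda^d\in\Q$, and applying $\sigma$ gives $r_k^d=1$.)

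With this step in place your argument is complete, including the common $\Delta$ and integrality of the $b_j$. It is in fact slightly cleaner than the paper's route. It never uses $|\spec(M)|\ge 3$, which the second part of Theorem~\ref{thm:ratiocon} requires but the corollary does not assume, so your ``degenerate case'' needs no separate treatment.

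\textbf{A minor point.} Theorem~\ref{thm:ratiocon} is stated for vectors. To get $U$ scalar at a single time, apply it to a vector $\bx$ with $\supp_{\bx}=\spec(M)$. Then $U(\tau)\bx=\gamma\bx$ forces $e^{\ii\tau\lambda}=\gamma$ for every $\lambda\in\spec(M)$.
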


Combining Corollaries \ref{Lem:compHad1} and \ref{cor:lumper1} yields the following result.

\begin{corollary}
\label{algent2}
Let $M$ be an integer matrix. If $X$ admits uniform mixing and $\sqrt{n}U(\tau)$ is a Butson Hadamard matrix, then $X$ is periodic, and conditions 1 and 2 in Corollary~\ref{cor:lumper1} hold. 
\end{corollary}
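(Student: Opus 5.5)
The plan is to combine the two structural descriptions the paper already gives: Corollary~\ref{Lem:compHad1} identifies $\sqrt{n}U(\tau)$ with a symmetric complex Hadamard matrix, and Corollary~\ref{cor:lumper1} draws the spectral consequences from algebraicity of the entries of $U(\tau)$. The hypothesis supplies the link between them, namely that $\sqrt{n}U(\tau)$ is Butson Hadamard.

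First, since uniform mixing occurs at time $\tau$, Corollary~\ref{Lem:compHad1} gives that $\mathcal{H}:=\sqrt{n}U(\tau)$ is a symmetric complex Hadamard matrix. By hypothesis it is Butson, so every entry of $\mathcal{H}$ is an $r$th root of unity for some integer $r$, and in particular is algebraic. Because $n$ is a positive integer, $\sqrt{n}$ is algebraic and nonzero. Hence each entry $U(\tau)_{j,\ell}=\mathcal{H}_{j,\ell}/\sqrt{n}$ is a quotient of algebraic numbers, and is therefore algebraic.

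Second, if $\tau<0$, I would replace $\tau$ by $-\tau$. Since $M$ is real symmetric, $U(-\tau)=\overline{U(\tau)}$, and the entries of $U(-\tau)$ are therefore also algebraic; moreover $\sqrt{n}U(-\tau)=\overline{\mathcal{H}}$ is again Butson Hadamard. The case $\tau=0$ cannot occur, because $U(0)=I$ is not uniform when $n\geq 2$. So we may assume $\tau>0$.

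Finally, $M$ is an integer matrix and all entries of $U(\tau)$ are algebraic at some $\tau>0$, so Corollary~\ref{cor:lumper1} applies directly. It yields that $X$ is periodic and that conditions 1 and 2 of that corollary hold.

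There is no real obstacle here. The only points needing care are that algebraicity passes from $\mathcal{H}$ to $U(\tau)$ through division by $\sqrt{n}$, and the sign of $\tau$; both are handled above.
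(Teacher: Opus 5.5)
Your proposal is correct and follows the paper's own route: the paper obtains this corollary by combining Corollary~\ref{Lem:compHad1} with Corollary~\ref{cor:lumper1}. As you do, it uses that roots of unity are algebraic, so the entries of $U(\tau)=\mathcal{H}/\sqrt{n}$ are algebraic. Your extra step handling $\tau\le 0$ is harmless but unnecessary, because the paper's definition of (local) uniform mixing already requires $\tau>0$.
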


The following result complements Corollary~\ref{algent1}, and is immediate from \cite[Lemma 5.1]{godsil2025perfect}.

\begin{proposition}
\label{per2t}
Suppose local uniform mixing occurs at $u$ in $X$ at time $\tau$. If the target state $\bmu$ is equal to $\alpha\bz$ for some $|\alpha|=1$ and a real vector $\bz$, then vertex $u$ is periodic at time $2\tau$.
\end{proposition}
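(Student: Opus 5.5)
We work with Corollary~\ref{charlum}(3): for every eigenpair $(\bv,\lambda)$ with $\lambda\in\supp_{\be_u}$ we have $\sqrt{n}\,e^{\ii\tau\lambda}(\bv^T\be_u)=\bv^T\bmu$. The plan is to use the hypothesis $\bmu=\alpha\bz$ with $\bz$ real to show that doubling the time makes every phase factor equal, so that $U(2\tau)\be_u$ is a scalar multiple of $\be_u$.

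First, take $\bv$ to be a real eigenvector. Since $M$ is real symmetric, each $E_\lambda$ is real and has an orthonormal basis of real eigenvectors, so this is no restriction. Substituting gives $\sqrt{n}\,e^{\ii\tau\lambda}v_u=\alpha\,\bv^T\bz$. Both $v_u$ and $\bv^T\bz$ are real.

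Next, restrict to eigenvectors with $v_u\neq 0$. For $\lambda\in\supp_{\be_u}$ such a $\bv$ exists, for instance $\bv=E_\lambda\be_u$. Dividing, we get $e^{\ii\tau\lambda}=\alpha r_\lambda$ with $r_\lambda=\bv^T\bz/(\sqrt{n}v_u)\in\R$. Since $|e^{\ii\tau\lambda}|=|\alpha|=1$, this forces $r_\lambda=\pm1$. Squaring gives $e^{2\ii\tau\lambda}=\alpha^2$ for every $\lambda\in\supp_{\be_u}$, and the right-hand side does not depend on $\lambda$.

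Finally, write
\[
U(2\tau)\be_u=\sum_{\lambda\in\supp_{\be_u}}e^{2\ii\tau\lambda}E_\lambda\be_u=\alpha^2\sum_{\lambda}E_\lambda\be_u=\alpha^2\be_u .
\]
This shows that $u$ is periodic at time $2\tau$, assuming $\tau>0$ as in the definition of local uniform mixing.

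The only delicate point is justifying that a real eigenvector with a nonzero $u$th entry exists for each $\lambda$ in the support, which is handled by taking $\bv=E_\lambda\be_u$ as above. Everything else is direct computation.
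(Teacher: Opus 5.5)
Your proof is correct, but it takes a different route from the paper. The paper gives no computation here. It reads the hypothesis $\sqrt{n}\,U(\tau)\be_u=\alpha\bz$ as perfect state transfer from the real state $\be_u$ to the real state $\frac{1}{\sqrt{n}}\bz$, and appeals to Lemma~5.1 of \cite{godsil2025perfect} to get periodicity at time $2\tau$.

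You instead verify the conclusion yourself from Corollary~\ref{charlum}(3), one eigenvalue at a time. Your choice $\bv=E_\lambda\be_u$ is legitimate, since its $u$th entry is $(E_\lambda\be_u)^T(E_\lambda\be_u)>0$ for $\lambda\in\supp_{\be_u}$.

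The citation route is shorter and places the statement inside the general theory of state transfer between real pure states. In fact the imported fact needs no spectral decomposition at all. Since $\overline{U(\tau)}=U(-\tau)$, conjugating the hypothesis gives $\sqrt{n}\,U(-\tau)\be_u=\overline{\alpha}\bz$. Hence $U(\tau)\be_u=\alpha^2U(-\tau)\be_u$, that is, $U(2\tau)\be_u=\alpha^2\be_u$.

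Your eigenvalue-by-eigenvalue argument extracts more than periodicity. It shows $e^{\ii\tau\lambda}\in\{\alpha,-\alpha\}$ for every $\lambda\in\supp_{\be_u}$, with the sign determined by the sign of $(E_\lambda\bz)_u$. So the eigenvalue support splits into two classes, and $\tau(\lambda-\mu)\in\pi\Z$ for all $\lambda,\mu\in\supp_{\be_u}$. This is the same kind of sign condition that characterizes perfect state transfer.
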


It is known that local uniform mixing occurs at a vertex of $K_4$ at $\tau=\frac{\pi}{4}$ where $\bmu=ie^{\ii\tau}[1,-1,-1,-1]^T$. Invoking Proposition~\ref{per2t}, we get the well-known fact that each vertex of $K_4$ is periodic at $2\tau=\frac{\pi}{2}$. However, if $\bmu$ in Corollary~\ref{algent1} is not a unit scalar multiple of some real vector, then the time at which vertex $u$ is periodic is not necessarily a rational multiple of the time $\tau$ at which local uniform mixing occurs. For example, if we take $X=P_3$ and $M=A$, then one checks that the degree two vertex $u$ of $P_3$ is periodic at time $\tau'=\frac{\pi}{\sqrt{2}}$ and admits local uniform mixing at $\tau=\frac{\arctan\sqrt{2}}{\sqrt{2}}$ with target state $\bmu=[i,-1,i]^T$. Since $\arctan\sqrt{2}$ is not a rational multiple of $\pi$, it follows that $\tau'$ is not a rational multiple of $\tau$.

\begin{theorem}
\label{raregen}
Let $M\in\{A,L,Q\}$. For each integer $k>0$, there are only finitely many connected integer-weighted graphs with maximum degree at most $k$ such that for a nonnegative rational vector $\bx$, all entries of $U(\tau)\bx$ are algebraic numbers for some $\tau>0$.
\end{theorem}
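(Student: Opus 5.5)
The idea is to follow the Godsil–Mullin–Roy rarity argument for periodic graphs of bounded degree. First I would use periodicity to force the support of $\bx$ into a finite set of integer patterns, and then convert that into finitely many graphs. Let $\bx\geq\zero$ be a nonzero rational vector such that $U(\tau)\bx$ has algebraic entries. By Theorem~\ref{algent} and Corollary~\ref{cor:lumper}, $\bx$ is periodic. Moreover, either $\supp_{\bx}\subseteq\Z$, or every eigenvalue in $\supp_{\bx}$ has the form $\frac12(a+b_j\sqrt{\Delta})$; for $M=A$ this sharpens to $\pm b_j\sqrt{\Delta}$. (If $|\supp_{\bx}|\le 2$, the ratio condition is automatic. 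This case needs separate treatment, using connectivity and the Perron eigenvector below to show that $\supp_{\bx}$ still contains the spectral radius together with its algebraic conjugates.)

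The key observation is that because $\bx$ is nonnegative and nonzero, and $X$ is connected, $\bx$ is not orthogonal to the Perron eigenvector of $M$ when $M\in\{A,Q\}$. Hence the spectral radius $\rho(M)$ lies in $\supp_{\bx}$. For $M=L$ the vector $\one$ plays this role, so $0\in\supp_{\bx}$, and I would instead use the eigenvalues of $L$ in $\supp_{\bx}$, which lie in $[0,2k\,w_{\max}]$. Since $\supp_{\bx}$ is closed under algebraic conjugation and consists of algebraic integers of degree at most $2$, every eigenvalue in $\supp_{\bx}$ has modulus at most $\rho(M)$. Combined with the explicit forms from Corollary~\ref{cor:lumper}, there are only finitely many possibilities for $\supp_{\bx}$ once $\rho(M)$ is bounded.

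Bounding $\rho(M)$ requires bounded edge weights, so I would read ``maximum degree at most $k$'' as weighted degree at most $k$. Then every integer weight is at most $k$, and $\rho(A)\le k$ and $\rho(L),\rho(Q)\le 2k$. Consequently, $\supp_{\bx}$ is contained in a fixed finite set $S_k$ of quadratic integers lying in $[-2k,2k]$.

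The main obstacle is converting ``eigenvalue support of $\bx$ lies in a finite set'' into ``finitely many graphs''. I would argue as follows. Let $p(t)=\prod_{\lambda\in\supp_{\bx}}(t-\lambda)$. This polynomial has integer coefficients, and $p(M)\bx=\zero$. Its degree $s$ is at most $|S_k|$, so $\bx$ lies in a $p(M)$-annihilated Krylov space of dimension at most $s$. I would pick a vertex $u$ in the support of $\bx$. Because $\bx\ge\zero$ and $M$ has nonnegative off-diagonal entries ($A$, $Q$), the vectors $M^r\bx$ eventually reach every vertex at distance $r$ from the support, without cancellation. The relation $p(M)\bx=\zero$ expresses $M^s\bx$ as a combination of lower powers, which have strictly smaller support balls. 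This would force the graph's diameter to be at most $s$. A connected graph with maximum degree at most $k$ and diameter at most $s$ has at most $1+k\sum_{i<s}(k-1)^i$ vertices. For $L$ I would apply the same argument to $2kI-L$, whose off-diagonal entries are nonnegative; this shift only translates the finite set $S_k$. With the vertex count bounded and integer weights bounded by $k$, only finitely many weighted graphs remain.

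The delicate point is the support-growth step. I would need to verify that a vertex at distance exactly $s$ from $\operatorname{supp}(\bx)$ receives a strictly positive entry in $M^s\bx$ but a zero entry in each $M^r\bx$ with $r<s$. Diagonal entries can only enlarge supports, so this should go through using nonnegativity. If the support of $\bx$ is not a single vertex, the distance is measured from the whole support, and the bound on $n$ then needs a bound on $|\operatorname{supp}(\bx)|$. I would obtain that bound from the same Krylov dimension argument, or alternatively accept a diameter bound on each component around the support.
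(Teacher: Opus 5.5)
You follow the same route as the paper. The paper shows only that $\bx$ is periodic and then cites Theorem~3.8 and Remark~3.9 of \cite{godsil2025perfect}; that is a Godsil-type rarity argument of the kind you reconstruct. Several of your steps are sound:
\begin{itemize}
\item The weighted-degree reading is forced. With unweighted degree, take $K_2$ with edge weight $w$, $\bx=\be_1$ and $\tau=\pi/(2w)$; then $U(\tau)\bx=[0,\ii]^T$ for every $w$.
\item $|\supp_{\bx}|$ is bounded in terms of $k$.
\item The layer argument, using the entrywise nonnegative matrix $A$, $Q$ or $2kI-L$, correctly shows that every vertex lies within distance $|\supp_{\bx}|-1$ of $\operatorname{supp}(\bx)$. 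That is an eccentricity bound for the support, not a diameter bound.
\item The case $|\supp_{\bx}|\le 2$ needs no separate treatment, since the Krylov space then already has dimension at most $2$.
\end{itemize}

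The genuine gap is the step you flag last: bounding $|\operatorname{supp}(\bx)|$. It cannot be closed, because the Krylov dimension says nothing about the size of the support, and the statement is false for unrestricted $\bx$. Take $\bx=\one$ and any $k\ge 2$:
\begin{itemize}
\item For $M=L$, $U_L(\tau)\one=\one$ for every $\tau$ and every graph, so all paths qualify.
\item For $M=A$ (resp.\ $Q$), every connected $k$-regular graph qualifies. Indeed $U(\tau)\one=e^{\ii k\tau}\one$ (resp.\ $e^{2\ii k\tau}\one$), which equals $\one$ at $\tau=2\pi/k$ (resp.\ $\tau=\pi/k$).
\end{itemize}
Here $|\supp_{\one}|=1$ while $\operatorname{supp}(\one)=V(X)$. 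The vector $\bx=\zero$ is also a trivial counterexample.

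So your argument, like any argument, proves the theorem only after adding a hypothesis such as $\bx\neq\zero$ with $|\operatorname{supp}(\bx)|\le m$ for a fixed $m$. With that hypothesis, your layer bound gives $n\le m\bigl(1+k\sum_{i=0}^{s-2}(k-1)^i\bigr)$, where $s=|\supp_{\bx}|$ is bounded in terms of $k$. Finiteness then follows from the bounded integer weights. This covers $\bx=\be_u$, which is the only case the paper actually uses (Theorem~\ref{raregenA}). Note that the paper's one-line citation cannot establish the statement as worded either, so whatever result it invokes must carry such a restriction.
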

\begin{proof}
If $M$ has integer entries and $\bx$ is a nonnegative rational vector such that all entries of $U(\tau)\bx$ are algebraic numbers for some $\tau>0$, then $\bx$ is periodic by Corollary~\ref{cor:lumper}. Invoking Theorem 3.8 and Remark 3.9 in \cite{godsil2025perfect} yields the desired conclusion.
\end{proof}

\section{Bipartite graphs}\label{sec:bip}

Throughout, we assume $M=A$. Let $X$ be a weighted bipartite graph with parts $B_1$ and $B_2$. We may write
\begin{center}
 $A=\left[\begin{array}{ccc} O&B \\ B^T&O \end{array} \right]$
\end{center}
for some $|B_1|\times |B_2|$ matrix $B$. It is known that $\lambda$ is an eigenvalue of $A$ with eigenvector $[\bu,\bv]^T$ if and only if $-\lambda$ an eigenvalue of $A$ with eigenvector $[\bu,-\bv]^T$. Consequently, we may assume that an eigenvector associated with the zero eigenvalue of $A$ has the form $[\bu,\zero]^T$ or $[\zero,\bv]^T$. 

In \cite{Godsil2013}, it was shown that the transition matrix of a weighted bipartite graph has the form
\begin{equation}
\label{Eq:bip}
U_A(t)=\left[\begin{array}{ccc} U_1(t)&iU_2(t) \\ iU_2(t)^T&U_3(t) \end{array} \right]\quad \text{for all $t\in\R$},
\end{equation}
where $U_j(t)$ is a real matrix for each $j\in\{1,2,3\}$. Hence, the following results are immediate.

\begin{proposition}
\label{1}
Let $X$ be a weighted bipartite graph with parts $B_1$ and $B_2$.
\begin{enumerate}
\item If local $\epsilon$-uniform mixing occurs at $u\in B_1$ (resp., $u\in B_2$), then the target state is $[\bmu_1,\ii\bmu_2]^T$ (resp., $[\ii\bmu_1,\bmu_2]^T$), where $\bmu_1$ and $\bmu_2$ are $\pm 1$-vectors indexed by $B_1$ and $B_2$, respectively.
\item If $X$ has uniform mixing in $X$ at $\tau$, then $X$ is periodic and $\sqrt{n}U(\tau)$ is a Turyn Hadamard matrix.
\end{enumerate}
\end{proposition}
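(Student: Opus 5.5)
For part 1, I would argue directly from the block form (\ref{Eq:bip}) together with Proposition~\ref{proplum}(2). Suppose $u\in B_1$ and local $\epsilon$-uniform mixing occurs at $u$ relative to $\{\tau_m\}$. Then $\sqrt{n}U(\tau_m)\be_u\to\bmu$.

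By (\ref{Eq:bip}), $U(\tau_m)\be_u=[U_1(\tau_m)\be_u,\ \ii U_2(\tau_m)^T\be_u]^T$, where both $U_1(\tau_m)\be_u$ and $U_2(\tau_m)^T\be_u$ are real vectors. Limits of real vectors are real, so $\bmu=[\bmu_1,\ii\bmu_2]^T$ with $\bmu_1,\bmu_2$ real. Since $\bmu$ is 1-uniform, every entry of $\bmu_1$ and $\bmu_2$ is a real number of modulus one, that is, $\pm1$. The case $u\in B_2$ is identical: the $u$th column is now $[\ii U_2(t)\be_u,\ U_3(t)\be_u]^T$, which gives the form $[\ii\bmu_1,\bmu_2]^T$.

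For part 2, if uniform mixing occurs at $\tau$, then applying part 1 to every column shows that each entry of $\sqrt{n}U(\tau)$ lies in $\{\pm1,\pm\ii\}$. By Corollary~\ref{Lem:compHad1}, $\sqrt{n}U(\tau)$ is then a complex Hadamard matrix with entries in $\{\pm1,\pm\ii\}$, hence a Turyn Hadamard matrix. Its entries are in particular algebraic.

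Periodicity then follows from Corollary~\ref{cor:lumper1}, which needs $M$ to be an integer matrix; this is the step I expect to be the main obstacle, since it does not hold for a general weighted graph. My first attempt would avoid integrality. I would take $\bx=\be_u$ and note that $U(\tau)\be_u=\frac{1}{\sqrt n}[\bmu_1,\ii\bmu_2]^T$ equals a unit scalar times a real vector only when one of the two blocks vanishes, so Proposition~\ref{per2t} does not apply directly.

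Instead I would use the full matrix. Write $H=\sqrt{n}U(\tau)=\begin{bmatrix}H_1&\ii H_2\\ \ii H_2^T&H_3\end{bmatrix}$ with $H_1,H_2,H_3$ real. Then $U(2\tau)=U(\tau)^2=\frac1n H^2$. The diagonal blocks of $H^2$ are $H_1^2-H_2H_2^T$ and $H_3^2-H_2^TH_2$, which are real, and the off-diagonal blocks are $\ii(H_1H_2+H_2H_3)$ and its transpose, which are purely imaginary. To conclude periodicity I would show that $U(2\tau)$ is a scalar matrix. A natural route is to use symmetry, $H=H^T$, together with $H\overline H=nI$, which gives $H_1^2+H_2H_2^T=nI$ and $H_1H_2-H_2H_3=0$. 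If that is not enough, I would fall back on assuming $X$ has integer weights and cite Corollary~\ref{cor:lumper1} via Corollary~\ref{algent2}, since the Turyn entries are fourth roots of unity and so the matrix is Butson.
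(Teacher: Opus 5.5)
Your treatment of part 1, and of the Turyn assertion in part 2, is correct and matches the paper's reasoning. The paper simply declares the proposition immediate from the block form (\ref{Eq:bip}). Your argument writes that out: limits of the real vectors $U_1(\tau_m)\be_u$ and $U_2(\tau_m)^T\be_u$ are real, and 1-uniformity then forces entries $\pm1$.

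The gap is the periodicity claim, and your main plan cannot close it, because showing that $U(2\tau)$ is a scalar matrix aims at something false. Take $K_2$ with edge weight $w$ and $\tau=\pi/(4w)$. Then $\sqrt{2}\,U(\tau)=\begin{bmatrix}1&\ii\\ \ii&1\end{bmatrix}$, so $H_1=H_2=H_3=[1]$ satisfy both identities you derived, $H_1^2+H_2H_2^T=nI$ and $H_1H_2=H_2H_3$. Yet $U(2\tau)=\begin{bmatrix}0&\ii\\ \ii&0\end{bmatrix}$ is perfect state transfer, not a scalar. The same happens for the hypercube at $\tau=\pi/4$: $U(2\tau)$ is antipodal state transfer, and the first return is only at $4\tau$.

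Those identities encode nothing beyond symmetry and unitarity of $U(\tau)$. Periodicity, by contrast, is an arithmetic condition on the eigenvalues (the ratio condition of Theorem~\ref{thm:ratiocon}), while $U(\tau)$ only records $\tau\lambda$ modulo $2\pi$. The only mechanism in the paper that turns algebraic entries of $U(\tau)$ into that condition is Theorem~\ref{algent}, together with Corollaries~\ref{cor:lumper1} and \ref{algent2}, and all of these require $M$ to be an integer matrix. The theorem the paper restates from Coutinho and Godsil right after this proposition is likewise stated only for integer-weighted graphs.

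Your fallback is valid: the entries of $\sqrt{n}U(\tau)$ are fourth roots of unity, so it is Butson, and Corollary~\ref{algent2} applies. In effect this is the only argument the paper's tools support. However, it proves periodicity only under an added integrality hypothesis. For arbitrary real weights, neither your proposal nor the paper's one-word ``immediate'' justification gives a proof, so that part of the statement remains unestablished.
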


The following fact is a restatement of Corollary 9.7.3 in \cite{Coutinho2021}.

\begin{theorem}
Let $X$ be an integer-weighted bipartite graph. The following hold.
\begin{enumerate}
\item If vertex $u$ admits local uniform mixing in $X$, then vertex $u$ is periodic and either
(i) $\supp_{\be_u}\subseteq\Z$, or (ii) each $\lambda_j\in\supp_{\be_u}$ is of the form $\pm b_j\sqrt{\Delta}$, where $b_j$ and $\Delta$ are integers and $\Delta>1$ is square-free.
\item If $X$ admits uniform mixing in $X$ at time $\tau$, then $X$ is periodic, and either (i) $\spec(M)\subseteq\Z$, or (ii) each $\lambda_j\in\spec(M)$ is of the form $\pm b_j\sqrt{\Delta}$, where $b_j$ and $\Delta$ are integers and $\Delta>1$ is square-free.
\end{enumerate}
\end{theorem}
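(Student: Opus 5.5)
The plan is to read both parts off the block form (\ref{Eq:bip}) of $U_A(t)$ and then apply the algebraic-entries machinery of Section~\ref{sec:per}, namely Theorem~\ref{algent} and Corollary~\ref{cor:lumper}. The integer-weighted bipartite hypothesis is what lets those results run, since $A$ is then an integer matrix.

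For part 1, suppose local uniform mixing occurs at $u$ at time $\tau$, and say $u\in B_1$ (the case $u\in B_2$ is symmetric). By Corollary~\ref{charlum}(2), $\sqrt{n}U(\tau)\be_u=\bmu$. By (\ref{Eq:bip}), column $u$ of $U(\tau)$ is $[U_1(\tau)\be_u,\ii U_2(\tau)^T\be_u]^T$ with $U_1,U_2$ real. Hence each entry of $\sqrt{n}U(\tau)\be_u$ is real or purely imaginary and has modulus $1$, so it lies in $\{\pm1,\pm\ii\}$; this is Proposition~\ref{1}(1) specialized to $\tau_m\equiv\tau$. Consequently every entry of $U(\tau)\be_u$ is $\pm 1/\sqrt{n}$ or $\pm\ii/\sqrt{n}$, which is algebraic. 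Theorem~\ref{algent} with $\bx=\be_u$ then shows that $u$ is periodic, and it is the step where the algebraic entries are used.

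For the dichotomy I will invoke Corollary~\ref{cor:lumper}(2) with $\bx=\be_u$. This gives either (i) $\supp_{\be_u}\subseteq\Z$, or (ii) each $\lambda_j\in\supp_{\be_u}$ has the form $\pm b_j\sqrt{\Delta}$, provided $|\supp_{\be_u}|\ge 3$. The small cases must be handled separately, and I expect this to be the main (minor) obstacle. Since $X$ is bipartite, $\be_u=[\be_u,\zero]^T$ has $E_\lambda\be_u\neq\zero$ iff $E_{-\lambda}\be_u\neq\zero$, so $\supp_{\be_u}$ is symmetric about $0$. If $|\supp_{\be_u}|\le 2$, then $\supp_{\be_u}=\{\pm\theta\}$ or $\{0\}$ (the latter only if $u$ is isolated, which connectivity excludes when $n\ge2$). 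In the case $\{\pm\theta\}$, $\theta^2$ is an eigenvalue of the integer matrix $BB^T$ relevant to $u$. From the $(u,u)$ entry, $\theta^2=(A^2)_{u,u}$, which is an integer. Writing $\theta^2=b^2\Delta$ with $\Delta$ square-free puts $\theta$ in form (ii), or in form (i) when $\Delta=1$. Degenerate cases such as $n=1$ are trivial.

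For part 2, uniform mixing at time $\tau$ means every column satisfies part 1. In particular all entries of $U(\tau)$ lie in $\{\pm1,\pm\ii\}/\sqrt{n}$, as Proposition~\ref{1}(2) already notes. Corollary~\ref{cor:lumper1}(2) then applies directly and gives that $X$ is periodic and that either $\spec(M)\subseteq\Z$ or every eigenvalue has the form $\pm b_j\sqrt{\Delta}$. In the second alternative bipartiteness already holds by hypothesis. Alternatively one can piece this together from part 1 via Lemma~\ref{lem:sup}, as in the proof of the corollary preceding Corollary~\ref{cor:lumper1}, since a common $\Delta$ propagates along overlapping supports.
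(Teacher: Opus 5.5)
Your proposal is correct and follows the route the paper implicitly takes. The paper itself only cites Corollary 9.7.3 of Coutinho--Godsil, but its surrounding machinery is exactly yours: the block form (\ref{Eq:bip}) forces $\sqrt{n}U(\tau)\be_u$ to have entries in $\{\pm1,\pm\ii\}$ (Proposition~\ref{1}), so these entries are algebraic, and then Theorem~\ref{algent} together with Corollaries~\ref{cor:lumper} and~\ref{cor:lumper1} (as packaged in Corollaries~\ref{algent1} and~\ref{algent2}) gives the conclusion. Your separate treatment of the case $|\supp_{\be_u}|\le 2$, using the symmetry of the support about $0$ and $\theta^2=(A^2)_{u,u}\in\Z$, is a worthwhile addition, since Corollary~\ref{cor:lumper} as stated only covers supports of size at least three.
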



\begin{theorem}
\label{raregenA}
For each integer $k>0$, there are only finitely many connected integer-weighted bipartite graphs with maximum degree at most $k$ that admit local uniform mixing at a vertex relative to $A$.
\end{theorem}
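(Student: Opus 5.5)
The plan is to reduce Theorem~\ref{raregenA} to Theorem~\ref{raregen} by showing that local uniform mixing at a vertex $u$ of an integer-weighted bipartite graph forces all entries of $U(\tau)\be_u$ to be algebraic numbers. Since $\be_u$ is a nonnegative rational vector, Theorem~\ref{raregen} then bounds the number of such graphs with maximum degree at most $k$. Summing over the finitely many choices of $u$ does not matter, because the statement of Theorem~\ref{raregen} already quantifies over some nonnegative rational vector.

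The key step comes from Proposition~\ref{1}(1) together with Corollary~\ref{charlum}(2). If local uniform mixing occurs at $u\in B_1$ at time $\tau$, then the target state has the form $\sqrt{n}\,U(\tau)\be_u=\bmu=[\bmu_1,\ii\bmu_2]^T$, where $\bmu_1,\bmu_2$ are $\pm1$-vectors. The case $u\in B_2$ is symmetric.

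To justify the form of $\bmu$, I would note that by (\ref{Eq:bip}) the $u$th column of $U(\tau)$ is real on $B_1$ and purely imaginary on $B_2$. Its entries have modulus $1/\sqrt{n}$, so each entry is $\pm 1/\sqrt{n}$ or $\pm \ii/\sqrt{n}$. Hence every entry of $U(\tau)\be_u$ lies in $\{\pm 1/\sqrt{n},\pm\ii/\sqrt{n}\}$, and all of these are algebraic. This applies Proposition~\ref{1} for an exact time $\tau$ rather than a limit, which is fine: equation (\ref{Eq:bip}) holds for every $t$.

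The hypotheses of Theorem~\ref{raregen} are then met, with $M=A$ an integer matrix, $X$ connected (Proposition~\ref{prop:conn}), and $\bx=\be_u$ nonnegative and rational. The only point needing care is the connectedness and integrality assumptions, which are part of the statement. I expect no real obstacle beyond invoking Theorem~\ref{raregen}. Its finiteness ultimately rests on \cite[Theorem 3.8]{godsil2025perfect}, which bounds periodic vertices in bounded-degree integer graphs, and that result is assumed.
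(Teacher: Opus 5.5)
Your proposal is correct and matches the paper's proof: take $\bx=\be_u$ in Theorem~\ref{raregen}, and use the block form (\ref{Eq:bip}) (equivalently Proposition~\ref{1}(1)) to see that $\sqrt{n}\,U(\tau)\be_u$ has entries in $\{\pm1,\pm\ii\}$, which are algebraic. You are also right that the finiteness really rests on the periodic-vertex case, and only $\bx=\be_u$ is needed here; this matters because Theorem~\ref{raregen} is too strong for arbitrary nonnegative rational $\bx$ (for instance, $\bx=\one$ is fixed by $U_L(t)$ in every graph).
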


\begin{proof}
Taking $\bx=\be_u$ in Theorem~\ref{raregen} and using the fact that the target state of local uniform mixing in a bipartite graph has all algebraic entries by Proposition~\ref{1}(1) yields the desired conclusion.
\end{proof}

\begin{proposition}
\label{prop7}
Let $X$ be a weighted bipartite graph on $n$ vertices with parts $B_1$ and $B_2$. Suppose local $\epsilon$-uniform mixing occurs at $u\in B_1$ relative to $\{\tau_m\}$ with $\bmu=[\bmu_1,\ii\bmu_2]^T$. Let $\lambda\in\supp_{\be_u}$, $\bv=[\bv_1,\bv_2]^T$ be an eigenvector for $\lambda$ with $\bv^T\be_u\neq 0$, and define 
$\cos(\lambda\tau'):=\displaystyle\lim_{m\rightarrow\infty}\cos(\lambda\tau_m)$ and $\sin(\lambda\tau'):=\displaystyle\lim_{m\rightarrow\infty}\sin(\lambda\tau_m)$.
\begin{enumerate}
\item We have $\cos(\lambda\tau')\sqrt{n}\bv_1^T\be_u=\bv_1^T\bmu_1$ and $\sin(\lambda\tau')\sqrt{n}\bv_1^T\be_u=\bv_2^T\bmu_2.$ 
\item Suppose $\bv$ has integer entries. Then:
\begin{enumerate}
\item We have $n=\frac{a^2+b^2}{c^2}$, where $a,b,c$ are integers and $c=\bv^T\be_u$. In particular, $n$ is a sum of two integer squares whenever $c=\pm 1$ (e.g., when $X$ is regular).
\item $n$ is a perfect square if and only if $\cos(\lambda\tau')$ is rational, if and only if $\sin(\lambda\tau')$ is rational. In particular, if $\cos(\lambda\tau')\in\{-\frac{\sqrt{3}}{2},-\frac{1}{2},\frac{1}{2},\frac{\sqrt{3}}{2}\}$, then $n$ is not a perfect square.
\item Suppose further that all entries of $\bv$ belong to $\{-1,0,1\}$. If $\cos(\lambda\tau')\in\{-1,0,1\}$, then $n$ is a perfect square. In particular, if $\cos(\lambda\tau')=\pm 1$, then $\sqrt{n}\leq |B_1|$ and $\sqrt{n}$ has the same parity as $|B_1|$. Otherwise, $\sqrt{n}\leq |B_2|$ and $\sqrt{n}$ has the same parity as $|B_2|$. 
\end{enumerate}
\end{enumerate}
\end{proposition}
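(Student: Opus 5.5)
The plan is to read everything off the eigenvector form of Proposition~\ref{proplum}(3), using the bipartite structure of the target state from Proposition~\ref{1}(1). Since $M=A$ is real symmetric, I take $\bv$ real. By Proposition~\ref{proplum}(3), $\lim_{m\to\infty}e^{\ii\tau_m\lambda}$ exists, and
\[
\Big(\lim_{m\to\infty}e^{\ii\tau_m\lambda}\Big)\sqrt{n}\,(\bv^T\be_u)=\bv^T\bmu=\bv_1^T\bmu_1+\ii\,\bv_2^T\bmu_2 .
\]
Because $u\in B_1$, we have $\bv^T\be_u=\bv_1^T\be_u$, which is real. Also $\bmu_1,\bmu_2$ are $\pm1$-vectors, so both $\bv_1^T\bmu_1$ and $\bv_2^T\bmu_2$ are real. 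Writing $\lim e^{\ii\tau_m\lambda}=\cos(\lambda\tau')+\ii\sin(\lambda\tau')$ and comparing real and imaginary parts gives part 1 directly.

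For 2(a), set $c=\bv^T\be_u\neq0$, $a=\bv_1^T\bmu_1$ and $b=\bv_2^T\bmu_2$; these are integers because $\bv$ is integral and $\bmu_j$ has entries $\pm1$. Squaring and adding the two identities of part 1, with $\cos^2+\sin^2=1$, gives $nc^2=a^2+b^2$. In the regular case the Perron eigenvector $\one$ has $c=1$, so $n$ is a sum of two integer squares.

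For 2(b), part 1 gives $\cos(\lambda\tau')=a/(c\sqrt n)$ and $\sin(\lambda\tau')=b/(c\sqrt n)$. If $n$ is a perfect square, both are rational. Conversely, suppose $\cos(\lambda\tau')$ is rational. If $a\neq0$, then $\sqrt n$ is rational. If $a=0$, then $nc^2=b^2$, so again $\sqrt n=|b/c|$ is rational. The same argument works for $\sin$. For the listed values, either $\cos(\lambda\tau')=\pm\frac{\sqrt3}{2}$ is irrational, or $\cos(\lambda\tau')=\pm\frac12$ and then $\sin(\lambda\tau')=\pm\frac{\sqrt3}{2}$ is irrational. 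In both cases $n$ is not a perfect square.

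For 2(c), entries of $\bv$ in $\{-1,0,1\}$ force $c=\pm1$. If $\cos(\lambda\tau')=\pm1$, then $\sin(\lambda\tau')=0$, so $b=0$ and $n=a^2$. If $\cos(\lambda\tau')=0$, then $a=0$ and $n=b^2$. Either way $n$ is a perfect square.

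In the first case, $\sqrt n=|a|=|\bv_1^T\bmu_1|$. This is a signed sum of $\pm1$ over the support of $\bv_1$. The triangle inequality then gives $\sqrt n\le|\operatorname{supp}\bv_1|\le|B_1|$, and $|a|$ has the same parity as $|\operatorname{supp}\bv_1|$. The second case is symmetric, with $\bv_2$, $\bmu_2$ and $B_2$ in place of $\bv_1$, $\bmu_1$ and $B_1$.

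The main obstacle is the parity claim. What the argument gives directly is parity with $|\operatorname{supp}\bv_1|$, not with $|B_1|$. My first attempt is to use $a^2\equiv a\pmod 2$, which gives $\sqrt n\equiv n=|B_1|+|B_2|\pmod 2$. This reduces the claim to showing that $|B_2|$ is even, or alternatively that $\bv_1$ has full support on $B_1$. I would try to extract one of these, for instance from the constraint $\bv_2^T\bmu_2=0$ together with the eigen-equation $B\bv_2=\lambda\bv_1$. If neither can be obtained, the parity statement should be read with $|\operatorname{supp}\bv_1|$ in place of $|B_1|$, which is what the computation actually proves.
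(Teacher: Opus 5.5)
Your proofs of part 1, 2(a), 2(b), and of the claims ``$n$ is a perfect square'' and $\sqrt n\le|B_1|$ (resp.\ $|B_2|$) in 2(c) are correct and follow the paper's route. The only difference is cosmetic, in part 1. The paper also writes the relation for the eigenvector $[\bv_1,-\bv_2]^T$ of $-\lambda$ and then adds and subtracts the two relations. You instead compare real and imaginary parts directly. Since $\lim e^{\ii\tau_m(-\lambda)}=\overline{\lim e^{\ii\tau_m\lambda}}$, this is the same computation, and your version does not need $-\lambda\in\supp_{\be_u}$. In 2(b), your separate treatment of the case $a=0$ fills in what the paper calls immediate.

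On the parity clause your suspicion is right, and you should not try to rescue it: as stated it is false. The paper's proof simply asserts that $\bv_1^T\bmu_1$ and $|B_1|$ have the same parity. That is exactly the step you flagged, and it holds only when $\bv_1$ has no zero entry.

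Here is a counterexample. Take $X=K_{1,3}$ with $B_1=\{\ell_1,\ell_2,\ell_3\}$ the leaves, $B_2=\{c\}$, and $u=\ell_1$. Because $A^3=3A$,
\[
U(t)=I+\frac{\ii\sin(\sqrt3\, t)}{\sqrt3}A+\frac{\cos(\sqrt3\, t)-1}{3}A^2 .
\]
At $\tau=\frac{2\pi}{3\sqrt3}$ this gives $2U(\tau)\be_{\ell_1}=[1,-1,-1,\ii]^T$, with vertices ordered $\ell_1,\ell_2,\ell_3,c$. So local uniform mixing occurs at $u$ (take $\tau_m=\tau$), with $\bmu_1=[1,-1,-1]^T$ and $\bmu_2=[1]$.

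Now check the hypotheses of 2(c). We have $\lambda=0\in\supp_{\be_u}$. The vector $\bv=\be_{\ell_1}-\be_{\ell_2}$ is a $\{-1,0,1\}$ null vector with $\bv^T\be_u=1$, and $\cos(\lambda\tau')=1$. Part 1 then gives $\sqrt n=\bv_1^T\bmu_1=2\le 3=|B_1|$. But $2$ and $3$ have different parities.

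Both of your proposed repairs fail on this example:
\begin{itemize}
\item $|B_2|=1$ is odd.
\item No $\{-1,0,1\}$ null vector is nowhere zero on $B_1$, since three entries $\pm1$ cannot sum to $0$.
\end{itemize}

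The correct conclusion is the one your computation yields: $\sqrt n\le|\operatorname{supp}\bv_1|$ and $\sqrt n\equiv|\operatorname{supp}\bv_1|\pmod 2$. When $\cos(\lambda\tau')=0$, the same holds with $\bv_2$ in place of $\bv_1$. This recovers the stated claim whenever $\bv_1$ (resp.\ $\bv_2$) has full support on its part. The same example also contradicts the parity assertions of Corollary~\ref{cor:singularbip} and Theorem~\ref{thm:1tree}, which inherit this clause.
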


\begin{proof}
Set $\gamma_\lambda=\lim_{m\rightarrow\infty}e^{\ii\tau_m\lambda}$. By assumption, $\bv'=[\bv_1,-\bv_2]^T$ is also an eigenvector for $-\lambda$. Also, $\gamma_\lambda\sqrt{n}[\bv_1,\bv_2]^T\be_u=[\bv_1,\bv_2]^T\bmu$ by Proposition~\ref{proplum}(3). Since $\bmu=[\bmu_1,\ii\bmu_2]^T$, the preceding equation yields
\begin{center}
$\gamma_\lambda\sqrt{n}\bv_1^T\be_u=\bv_1^T\bmu_1+\ii\bv_2^T\bmu_2\neq 0\quad $ and $\quad \gamma_{-\lambda}\sqrt{n}\bv_1^T\be_u=\bv_1^T\bmu_1-\ii\bv_2^T\bmu_2\neq 0$. 
\end{center}
Adding these equations gives us $\cos(\lambda\tau')\sqrt{n}\bv_1^T\be_u=\bv_1^T\bmu_1$ and $\sin(\lambda\tau')\sqrt{n}\bv_1^T\be_u=\bv_2^T\bmu_2.$ So, 1 holds.

To prove 2, suppose $\bv$ has integer entries. As each entry of $\bmu$ is in $\{\pm 1,\pm \ii\}$, the left equation above gives us $\gamma_\lambda\sqrt{n}c=a+b\ii$, where $a,b,c\in\Z$. Taking modulus yields 2(a), while 2b is immediate from 1. To prove 3, suppose all entries of $\bv$ belong to $\{-1,0,1\}$. If $\cos(\lambda\tau')\in\{-1,0,1\}$, then $n$ is a perfect square by 2(b). As $\bv_1^T\be_u\neq 0$, we get $|\bv_1^T\be_u|=1$. By 1, either $\sqrt{n}=|\bv_1^T\bmu_1|\neq 0$ or $\sqrt{n}=|\bv_2^T\bmu_2|\neq 0.$ Since $\bv$ has entries in $\{-1,0,1\}$, both $\bv_1^T\bmu_1$ and $\bv_2^T\bmu_2$ are integers. In particular, if $\cos(\lambda\tau')=\pm 1$, then $\sqrt{n}=|\bv_1^T\bmu_1|$. As $\bv_1$ and $\bmu_1$ have entries in $\{-1,0,1\}$, we get $\bv_1^T\bmu_1\leq |B_1|$, and $\bv_1^T\bmu_1$ and $|B_1|$ have the same parities. So, $\sqrt{n}\leq |B_1|$, and $n$ and $|B_1|$ have the same parities. If $\cos(\lambda\tau')=\pm 0$, then the same argument yields $\sqrt{n}\leq |B_2|$, and $n$ and $|B_2|$ have the same parities.
\end{proof}

The following is immediate from Proposition~\ref{prop7}(2b,2c).

\begin{corollary}
\label{cor:singularbip}
Let $X$ be a singular integer-weighted bipartite graph on $n$ vertices with parts $B_1$ and $B_2$. If local $\epsilon$-uniform mixing occurs at $u\in B_1$, then $n$ is a perfect square. Additionally, if there is an eigenvector $\bv$ for $0$ with entries in $\{-1,0,1\}$ and $\bv^T\be_u\neq 0$, then $\sqrt{n}\leq |B_1|$, and $n$ and $|B_1|$ have the same parities.
\end{corollary}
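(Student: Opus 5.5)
The plan is to apply Proposition~\ref{prop7} with $\lambda=0$. Since $X$ is singular, $0\in\spec(A)$. Following the convention stated at the start of this section, we may choose a basis of the $0$-eigenspace consisting of vectors of the form $[\bu,\zero]^T$ or $[\zero,\bv]^T$.

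\textbf{Step 1: $0\in\supp_{\be_u}$ with a suitable eigenvector.} We need $0\in\supp_{\be_u}$ together with an eigenvector $\bv=[\bv_1,\zero]^T$ satisfying $\bv^T\be_u\neq 0$. Because $A$ has integer entries, its nullspace has a rational basis, and after clearing denominators an integer basis. Projecting onto the $B_1$ coordinates, we may take basis vectors supported on $B_1$ or on $B_2$.

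If $E_0\be_u\neq\zero$, some integer null vector of the form $[\bv_1,\zero]^T$ has a nonzero $u$th entry, and we proceed.

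If $E_0\be_u=\zero$, Proposition~\ref{prop7} cannot be invoked at $\lambda=0$. I expect this to be the main obstacle. The only option I see is to read the statement as implicitly assuming $0\in\supp_{\be_u}$. This assumption is automatic in the second part, where an eigenvector $\bv$ for $0$ with $\bv^T\be_u\neq 0$ is given. For the first part, I would argue the same way whenever some integer null vector meets $u$.

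\textbf{Step 2: $n$ is a perfect square.} With $\lambda=0$ we have $\cos(\lambda\tau_m)=1$ for every $m$, so $\cos(\lambda\tau')=1$ is rational. Proposition~\ref{prop7}(2b), applied to the integer eigenvector $\bv$, then gives that $n$ is a perfect square.

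\textbf{Step 3: the bound and the parity.} Now suppose $\bv$ has entries in $\{-1,0,1\}$ and $\bv^T\be_u\neq 0$. Because $u\in B_1$, we may replace $\bv$ by its $B_1$ component $[\bv_1,\zero]^T$. This is again a null vector with entries in $\{-1,0,1\}$ and a nonzero $u$th entry, since $A[\bv_1,\bv_2]^T=\zero$ splits into $B^T\bv_1=\zero$ and $B\bv_2=\zero$.

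Since $\cos(\lambda\tau')=1$, Proposition~\ref{prop7}(2c) yields $\sqrt{n}=|\bv_1^T\bmu_1|\leq|B_1|$, with $\sqrt{n}$ of the same parity as $|B_1|$.

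It remains to convert this to the stated form. Because $\sqrt{n}\equiv n \pmod 2$, the parity of $\sqrt{n}$ equals the parity of $n$. Hence $n$ and $|B_1|$ have the same parity, which completes the argument.
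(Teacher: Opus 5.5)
Your approach is the same as the paper's: its proof is the one-line remark that the corollary is ``immediate from Proposition~\ref{prop7}(2b,2c)'' at $\lambda=0$. Steps 1--2 carry this out correctly.

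Your concern in Step 1 is justified, and the extra hypothesis is actually forced. The paper itself shows in Section~\ref{sec:per} that the centre of $P_3$ admits local uniform mixing. $P_3$ is singular and bipartite with $n=3$, and at its centre $E_0\be_u=\zero$. So the first claim is false without $0\in\supp_{\be_u}$, which must be added as in Theorem~\ref{thm:1tree}. Under that hypothesis your Step 2 is fine, since $\sqrt{n}\,(\bv_1^T\be_u)=\bv_1^T\bmu_1\in\Z$ forces $\sqrt{n}\in\Q$.

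The genuine gap is the parity assertion in Step 3, which you take from Proposition~\ref{prop7}(2c). You have $\sqrt{n}=|\bv_1^T\bmu_1|$ with $\bmu_1\in\{\pm1\}^{B_1}$ and $\bv_1\in\{-1,0,1\}^{B_1}$. So $\bv_1^T\bmu_1$ is a sum of $s$ terms $\pm1$, where $s=|\{j\in B_1:\bv^T\be_j\neq 0\}|$. This gives $\sqrt{n}\le s\le|B_1|$ and $\sqrt{n}\equiv s\pmod 2$. Nothing ties $\sqrt{n}$ to the parity of $|B_1|$ unless $\bv_1$ has no zero entries.

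The claimed parity is in fact false. Take $K_{1,3}$, let $u$ be a leaf and $B_1$ the three leaves, and take $\bv=\be_{1}-\be_{2}$ with $u=1$. This is a $0$-eigenvector with entries in $\{-1,0,1\}$ and $\bv^T\be_u=1$. At $\sqrt{3}\,t=2\pi/3$ one computes $2U(t)\be_u=[1,-1,-1,\ii]^T$ (leaves first, centre last), so local uniform mixing occurs. Yet $n=4$ and $|B_1|=3$ have different parities. Here $\bv_1^T\bmu_1=2=s$, consistent with the corrected conclusion.

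So the paper's proof has the same defect, since it rests on the same sentence of Proposition~\ref{prop7}(2c). What your argument actually establishes is $\sqrt{n}\le s\le |B_1|$ and $n\equiv\sqrt{n}\equiv s\pmod 2$. This reduces to the stated parity only when $\bv$ is nonzero on all of $B_1$.
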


\begin{theorem}
\label{thm:eumbip}
Suppose $\epsilon$-uniform mixing occurs in a weighted bipartite graph $X$ on $n$ vertices with parts $B_1$ and $B_2$ relative to $\{\tau_m\}$. Consider $\cos(\lambda\tau')$ and $\sin(\lambda\tau')$ as in Proposition~\ref{prop7}. The following hold.
\begin{enumerate}
\item $n\equiv 0$ (mod 4).
\item 
$\min\{|B_1|,|B_2|\}\geq\alpha\sqrt{n}\geq\sqrt{\frac{n}{2}}$, where $\alpha:=\displaystyle\max_{\lambda\in\spec(M)}\{|\cos(\lambda\tau')|,|\sin(\lambda\tau')|\}$. Additionally, if $X$ is singular, then $\min\{|B_1|,|B_2|\}\geq \sqrt{n}$.
\item If $X$ is singular and integer-weighted, then $n$ is an even perfect square. Additionally, if there is an eigenvector for 0 with all entries in $\{-1,0,1\}$, then $|B_1|\equiv |B_2|\equiv d$ (mod 4), where $d\in\{0,2\}$.
\end{enumerate}
\end{theorem}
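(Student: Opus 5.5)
The plan is to pass everything to the limiting matrix. By Proposition~\ref{Lem:compHad}, $\epsilon$-uniform mixing gives a subsequence with $\sqrt{n}U(\tau_m)\to\mathcal{H}$, a symmetric complex Hadamard matrix. By (\ref{Eq:bip}), the diagonal blocks of $U(t)$ are real and the off-diagonal blocks are $\ii$ times real. Hence
\[
\mathcal{H}=\begin{bmatrix}H_1&\ii H_2\\ \ii H_2^T&H_3\end{bmatrix},
\]
where $H_1,H_2,H_3$ are $\pm1$ matrices; this also follows from Proposition~\ref{1}(1) applied column by column.

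For part 1, I conjugate by a diagonal unitary. Multiplying the $B_2$-columns and the $B_2$-rows of $\mathcal{H}$ by $-\ii$ gives
\[
\begin{bmatrix}H_1&H_2\\ H_2^T&-H_3\end{bmatrix},
\]
and diagonal unitary scaling preserves $\mathcal{H}\overline{\mathcal{H}}^T=nI$. So this is a real Hadamard matrix of order $n$, and the classical result forces $n\in\{1,2\}$ or $n\equiv0\pmod 4$. The case $n=1$ is excluded because a bipartite graph needs two vertices. The case $n=2$ is genuinely a problem: $K_2$ has uniform mixing at $\pi/4$. So I expect the statement to need $n\ge3$, and I would add that hypothesis when writing the proof.

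For part 2, I use Proposition~\ref{prop7}(1) simultaneously for all $u\in B_1$. Fix $\lambda$ and let $F$ be the $B_1\times B_1$ block of $E_\lambda$. Applying Proposition~\ref{prop7}(1) to the columns of $E_\lambda$ in the form of Proposition~\ref{proplum}(3), taking real parts and stacking over $u\in B_1$, should give the matrix identity
\[
\cos(\lambda\tau')\sqrt{n}\,F=FH_1 .
\]
I want to apply this at a $\lambda$ for which $F\neq O$; every $\lambda\in\spec(M)$ lies in $\supp_{\be_w}$ for some vertex $w$, and via $\lambda\leftrightarrow-\lambda$ I may take $w\in B_1$, so $F\neq O$. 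Then $\cos(\lambda\tau')\sqrt{n}$ is an eigenvalue of $H_1^T$, a $\pm1$ matrix of order $|B_1|$, whose eigenvalues have modulus at most $|B_1|$. So $|\cos(\lambda\tau')|\sqrt{n}\le|B_1|$. Running the same argument with $u\in B_2$ and $H_3$ gives the same bound with $|B_2|$.

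The sine bound is the main obstacle. The analogous identity pairs the $B_1$-block with the $B_2\times B_1$ block of $E_\lambda$ through $H_2$. Here I would try to use that $\pm\lambda$ swap the roles of $\cos$ and $\sin$ via the eigenvector $[\bv_1,-\bv_2]^T$, together with $H_2^TH_2$ being controlled by the Hadamard relations. This should yield $|\sin(\lambda\tau')|\sqrt{n}\le\min\{|B_1|,|B_2|\}$. Then $\alpha\ge1/\sqrt2$ follows from $\cos^2+\sin^2=1$. If $X$ is singular, then $\lambda=0$ has $\cos(0\cdot\tau')=1$, so $\alpha=1$ and $\min\{|B_1|,|B_2|\}\ge\sqrt{n}$. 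One must still check that the kernel meets the relevant block for each part.

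For part 3, Corollary~\ref{cor:singularbip} gives that $n$ is a perfect square, and part 1 gives $4\mid n$, so $\sqrt n$ is even. With the $\{-1,0,1\}$ kernel vector, Corollary~\ref{cor:singularbip} (applied in each part, using $\epsilon$-uniform mixing at every vertex) makes $|B_1|$ and $|B_2|$ both even. Since $|B_1|+|B_2|=n\equiv0\pmod4$, they are both $0$ or both $2$ mod $4$.
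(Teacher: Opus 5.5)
Your part 1 argument (conjugating $\mathcal H$ by $\mathrm{diag}(I,-\ii I)$ to get a real Hadamard matrix) is what the paper's appeal to Proposition~\ref{1}(2) amounts to. You are right that $K_2$, which has uniform mixing at $\pi/4$ with $n=2$, forces the extra hypothesis $n\ge 3$. Your cosine bound via $FH_1=\cos(\lambda\tau')\sqrt n\,F$ is the matrix form of the paper's maximal-coordinate argument, and your derivation that $n$ is an even perfect square in part 3 goes through.

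The rest cannot be completed, because part 2 and the parity claim of part 3 are false. Take $X=K_{1,3}$, with $B_1$ the center and $B_2$ the leaves. Since $A^3=3A$,
\[
U(t)=I+\ii\frac{\sin(\sqrt3 t)}{\sqrt3}A+\frac{\cos(\sqrt3 t)-1}{3}A^2 .
\]
At $\tau=\frac{2\pi}{3\sqrt3}$ every entry of $U(\tau)$ has modulus $\frac12$, so uniform mixing occurs with $n=4$; the paper itself relies on this in Corollary~\ref{k13}. Yet:
\begin{itemize}
\item $\min\{|B_1|,|B_2|\}=1<\sqrt2=\sqrt{n/2}$;
\item $X$ is singular, yet $1<2=\sqrt n$;
\item $[0,1,-1,0]^T$ (center first) is a $\{-1,0,1\}$ null vector, while $|B_1|=1$ and $|B_2|=3$ are odd.
\end{itemize}

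Concretely, three steps of your plan fail.

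(a) The sine bound you flag as the main obstacle is false. For $\tau_m\equiv\tau$ and $\lambda=\sqrt3$ we get $|\sin(\lambda\tau')|\sqrt n=\sqrt3>1=|B_1|$. Also, $\lambda\mapsto-\lambda$ does not swap $\cos$ and $\sin$; it fixes $\cos(\lambda\tau')$ and negates $\sin(\lambda\tau')$. Write $G$ for the $B_1\times B_2$ block of $E_\lambda$, $s=\sin(\lambda\tau')$ and $\gamma_\lambda=\lim_m e^{\ii\tau_m\lambda}$. Then $\mathcal H E_\lambda=\gamma_\lambda\sqrt n E_\lambda$ gives $H_2G^T=s\sqrt nF$ and $H_2^TF=s\sqrt nG^T$. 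Hence $H_2H_2^TF=s^2nF$, with $H_2H_2^T=nI-H_1^2$. This yields only $|s|\sqrt n\le\sqrt{|B_1||B_2|}$, which $K_{1,3}$ attains.

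(b) Your reason for $F\neq O$ is wrong. $\supp_{\be_w}$ is closed under negation, so $\lambda\mapsto-\lambda$ never moves $w$ into $B_1$. For $\lambda\neq0$ the conclusion still holds, since $B\bv_2=\lambda\bv_1$ and $B^T\bv_1=\lambda\bv_2$ force both halves of an eigenvector to be nonzero. For $\lambda=0$, however, the kernel can lie inside one part, as it does on the leaves of $K_{1,3}$. That is exactly the check you left open, and exactly where the singular clause collapses.

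(c) In part 3, a single $\{-1,0,1\}$ null vector need not meet both parts. Moreover, the parity claim of Corollary~\ref{cor:singularbip} is itself off. The identity $\sqrt n=|\bv_1^T\bmu_1|$ only gives the parity of the number of nonzero entries of $\bv_1$, not of $|B_1|$ (these are $2$ and $3$ in $K_{1,3}$).

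The paper's proof has the same holes. Its ``same argument'' for the sine starts from $\sin(\lambda\tau')\sqrt n\,\bv_1^T\be_u=\bv_2^T\bmu_2$, which mixes the two halves of $\bv$. Taking $u$ at a maximal entry of $\bv_1$ therefore gives only $|\sin(\lambda\tau')|\sqrt n\le|B_2|\max_j|(\bv_2)_j|/\max_j|(\bv_1)_j|$. It also uses $\lambda=0$ in both parts, and the flawed parity claim.

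What can be rescued along your lines is:
\begin{itemize}
\item part 1 for $n\ge3$;
\item $|\cos(\lambda\tau')|\sqrt n\le\min\{|B_1|,|B_2|\}$ for $\lambda\neq0$;
\item $|\sin(\lambda\tau')|\sqrt n\le\sqrt{|B_1||B_2|}$;
\item the first sentence of part 3.
\end{itemize}
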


\begin{proof}
The proof of 1 follows from Proposition~\ref{1}(2) and an argument similar to that of \cite{Godsil2013}. To prove 2, we apply Proposition~\ref{prop7}(1) to each vertex of $X$. First, let $u\in B_1$. Since $|\bv_1^T\bmu_1|\leq |B_1||\bv_1^T\be_w|$, where $w\in B_1$ is an index where $\max_{j\in B_1}|\bv_1^T\be_j|$ occurs, Proposition~\ref{prop7}(1) yields $\sqrt{n}|\cos(\lambda\tau')||\bv_1^T\be_u|\leq |B_1||\bv_1^T\be_w|$. Thus, $|\cos(\lambda\tau')|\sqrt{n}\leq \frac{|B_1|\bv_1^T\be_w}{|\bv_1^T\be_u|}$, and taking $u=w$ yields $|\cos(\lambda\tau')|\sqrt{n}\leq |B_1|$. Next, let $u\in B_2$. Since $|\bv_2^T\bmu_2|\leq |B_1||\bv_2^T\be_w|$, where $w\in B_2$ is an index where $\max_{j\in B_2}|\bv_2^T\be_j|$ occurs, the first equation in Proposition~\ref{prop7}(1) with the interchanged roles of $B_1$ and $B_2$ implies that $\sqrt{n}|\cos(\lambda\tau')||\bv_2^T\be_u|\leq |B_2||\bv_1^T\be_w|$, and so the same argument yields $|\cos(\lambda\tau')|\sqrt{n}\leq |B_2|$. Thus, $|\cos(\lambda\tau')|\sqrt{n}\leq\min\{|B_1|, |B_2|\}$. The same argument applied to the second equation in Proposition~\ref{prop7}(1) yields $|\sin(\lambda\tau')|\sqrt{n}\leq\min\{|B_1|, |B_2|\}$. So, $\min\{|B_1|,|B_2|\}\geq\alpha\sqrt{n}$ with $\alpha\geq\frac{\sqrt{2}}{2}$. In particular, if $X$ is singular and $\lambda=0$, then $\alpha=1$. This proves 2. Finally, 3 follows from Corollary~\ref{cor:singularbip} and the fact that $n$ is even by 1.
\end{proof}

From Theorem~\ref{thm:eumbip}(2), weighted bipartite graphs with $\epsilon$-uniform mixing cannot be `too' unbalanced.

Let $X$ be an unweighted graph. A \textit{subdivision} $S(X)$ of $X$ is obtained from $X$ by replacing each edge $\{u,v\}$ of $X$ by the edges $\{u,w\}$ and $\{w,v\}$. Note that $S(X)$ is always bipartite and has exactly $|V(X)|+|E(X)|$ number of vertices. The following is immediate from Theorem~\ref{thm:eumbip}(1).

\begin{theorem}
\label{thm:subd}
Let $X$ be an unweighted graph. If $|V(X)|+|E(X)|\not\equiv 0$ (mod 4), then $S(X)$ does not admit $\epsilon$-uniform mixing for any assignment of edge weights.
\end{theorem}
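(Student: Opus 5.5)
The plan is to argue by contraposition and apply Theorem~\ref{thm:eumbip}(1) directly to the subdivision. Fix an unweighted graph $X$, and give $S(X)$ an arbitrary assignment of positive edge weights. The resulting weighted graph, which we still call $S(X)$, is bipartite for every choice of weights. One part is $V(X)$. The other part is the set of subdivision vertices, one for each edge of $X$. No two original vertices are adjacent in $S(X)$, and no two subdivision vertices are adjacent either. The weights play no role in this bipartition, since bipartiteness depends only on the underlying graph.

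Next we count vertices. The number of vertices of $S(X)$ is $N=|V(X)|+|E(X)|$, as noted just before the statement. Suppose, for a contradiction, that $S(X)$ with this weighting admits $\epsilon$-uniform mixing relative to $A$, which is the standing assumption in Section~\ref{sec:bip}. Then Theorem~\ref{thm:eumbip}(1) applies to the weighted bipartite graph $S(X)$ on $N$ vertices and gives $N\equiv 0 \pmod 4$. This contradicts the hypothesis $|V(X)|+|E(X)|\not\equiv 0 \pmod 4$. Since the weighting was arbitrary, the conclusion holds for every assignment of edge weights.

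The only step that needs care is that Theorem~\ref{thm:eumbip}(1) is valid for \emph{weighted} bipartite graphs. Its proof uses Proposition~\ref{1}(2) together with the Godsil--Mullin--Roy parity argument, and that route goes through the form (\ref{Eq:bip}) of $U_A(t)$, which holds for weighted bipartite graphs. In the $\epsilon$ setting, the role of $\sqrt{n}U(\tau)$ is taken by the limit $\mathcal{H}$ from Proposition~\ref{Lem:compHad}. The entries of $\mathcal{H}$ lie in $\{\pm1,\pm\ii\}$ by Proposition~\ref{1}(1), so $\mathcal{H}$ is a Turyn Hadamard matrix, and the classical mod-$4$ argument on orthogonality of rows then applies. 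I take all of this as already established by Theorem~\ref{thm:eumbip}. With that in hand, nothing further is required.
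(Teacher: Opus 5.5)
Your proposal is the paper's proof: the paper simply notes that $S(X)$ is bipartite on $|V(X)|+|E(X)|$ vertices for every weighting and invokes Theorem~\ref{thm:eumbip}(1). One caution about your side remark: being Turyn is not what forces $n\equiv 0 \pmod 4$. It is the block pattern of (\ref{Eq:bip}), which makes $\mathcal{H}$ diagonally equivalent to a real Hadamard matrix, and that still allows $n\leq 2$---so both your argument and the paper's tacitly exclude the degenerate case $X=K_1$, where $S(X)=K_1$ trivially mixes.
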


Let $k\geq 0$ be an integer. We say that a connected unweighted graph $X$ on $n$ vertices is \textit{$k$-cyclic} if $X$ has $(n-1)+k$ edges. Note that $X$ is a tree if $k=0$ and unicyclic if $k=1$. 
If $X$ is $k$-cyclic, then $S(X)$ has 
$2n-1+k$ number of vertices. A direct application of Theorem~\ref{thm:subd} yields the following result.

\begin{corollary}
\label{cor:subd}
Let $X$ be a $k$-cyclic weighted graph on $n$ vertices. If either $k$ is even, or $k$ and $n+\frac{k-1}{2}$ are both odd, then $S(X)$ does not admit $\epsilon$-uniform mixing for any assignment of edge weights.
\end{corollary}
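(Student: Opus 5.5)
The plan is to reduce the corollary to Theorem~\ref{thm:subd}. For that we only need the vertex count of $S(X)$ modulo $4$ and to check that it is nonzero under each of the two hypotheses. As noted before the statement, a $k$-cyclic graph on $n$ vertices has $n-1+k$ edges, so $S(X)$ has
\begin{center}
$N:=|V(X)|+|E(X)|=2n-1+k$
\end{center}
vertices. Theorem~\ref{thm:subd} (equivalently Theorem~\ref{thm:eumbip}(1) applied to the bipartite graph $S(X)$) says that $\epsilon$-uniform mixing for any edge weights would force $N\equiv 0 \pmod 4$. The proof therefore reduces to showing $N\not\equiv 0\pmod 4$ in each case.

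\emph{Case 1: $k$ even.} Then $2n+k$ is even, so $N=2n+k-1$ is odd. In particular $N\not\equiv 0\pmod 4$.

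\emph{Case 2: $k$ odd and $n+\frac{k-1}{2}$ odd.} Here $N$ is even, so parity alone does not decide the question. Write
\begin{center}
$N=2n+(k-1)=2\left(n+\tfrac{k-1}{2}\right)$.
\end{center}
The bracketed factor is an integer because $k$ is odd, and it is odd by hypothesis. Hence $N\equiv 2\pmod 4$.

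In both cases Theorem~\ref{thm:subd} rules out $\epsilon$-uniform mixing in $S(X)$ for every weighting. There is no real obstacle here; the only point needing care is to use the factorisation $N=2\left(n+\frac{k-1}{2}\right)$ in the odd-$k$ case rather than a parity argument.

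One remark on hypotheses. The statement calls $X$ weighted, but $S(X)$ and the count $|V(X)|+|E(X)|$ depend only on the underlying unweighted graph. The conclusion concerns arbitrary edge weights on $S(X)$, so the weights on $X$ play no role.
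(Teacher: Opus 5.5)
Your proposal is correct and follows the paper's own argument: the paper also counts $|V(S(X))|=2n-1+k$ and applies Theorem~\ref{thm:subd} directly, and your factorisation $2n-1+k=2\bigl(n+\tfrac{k-1}{2}\bigr)$ just makes explicit the mod-$4$ check the paper leaves to the reader. One caveat comes from the cited results rather than your reasoning: Theorem~\ref{thm:eumbip}(1), and hence Theorem~\ref{thm:subd}, needs at least three vertices, so the corollary has the trivial exception $X=K_1$ (where $S(X)=K_1$ trivially mixes uniformly), and this is the only exception since $|V(S(X))|=2$ cannot occur.
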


\begin{corollary}
\label{nodd}
Let $X$ be an unweighted bipartite graph on $n$ vertices admitting local $\epsilon$-uniform mixing at vertex $u$. Then $n$ or $\operatorname{deg}u$ is even. Moreover, there is an even number of vertices in $X$ with degree $d\equiv 2,3$ (mod 4) if and only if $|E(X)|$ and $\frac{1}{2}n\operatorname{deg}u$ have the same parities.
\end{corollary}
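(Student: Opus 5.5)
The plan is to work with the second equation in Corollary~\ref{Cor:eqs}(1), namely (\ref{Eq:cos1}):
\[
|E(X)|+\sum_{j>\ell}\cos(\theta_j-\theta_\ell)\,c_{j,\ell}=\tfrac12 n\operatorname{deg}u .
\]
I will show that every cosine appearing with a nonzero coefficient equals $\pm 1$. Then both sides are integers, and a reduction mod $2$ gives both claims.

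\textbf{Step 1: the relevant cosines are $\pm1$.} Take the parts $B_1,B_2$ with $u\in B_1$; the case $u\in B_2$ is symmetric. By Proposition~\ref{1}(1), the target state is $\bmu=[\bmu_1,\ii\bmu_2]^T$ with $\bmu_1,\bmu_2$ being $\pm1$-vectors. Consider a pair $j,\ell$ with $c_{j,\ell}>0$. Such a pair has a common neighbour, and since $X$ is bipartite, $j$ and $\ell$ lie in the same part. Within $B_1$ every entry of $\bmu$ is $\pm1$, and within $B_2$ every entry is $\pm\ii$. Hence $e^{\ii(\theta_j-\theta_\ell)}=\pm1$, so $\cos(\theta_j-\theta_\ell)=\pm1$ whenever $c_{j,\ell}\neq 0$.

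\textbf{Step 2: parity of $n$ or $\operatorname{deg}u$.} By Step 1, the left side of (\ref{Eq:cos1}) is an integer. Therefore $\frac12 n\operatorname{deg}u\in\Z$, which means $n\operatorname{deg}u$ is even. Hence $n$ or $\operatorname{deg}u$ is even.

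\textbf{Step 3: the degree count mod $2$.} Since $\pm1\equiv 1\pmod 2$, we get
\[
\sum_{j>\ell}\cos(\theta_j-\theta_\ell)\,c_{j,\ell}\equiv\sum_{j>\ell}c_{j,\ell}\pmod 2 .
\]
Next, double count paths of length two by their middle vertex. This gives $\sum_{j>\ell}c_{j,\ell}=\sum_{w}\binom{\operatorname{deg}w}{2}$.

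The binomial coefficient $\binom d2$ is odd exactly when $d\equiv 2,3\pmod 4$. Let $N$ be the number of vertices with degree $\equiv 2,3\pmod 4$. Then $\sum_{j>\ell}c_{j,\ell}\equiv N\pmod 2$, and (\ref{Eq:cos1}) reduces to
\[
|E(X)|+N\equiv \tfrac12 n\operatorname{deg}u\pmod 2 .
\]
So $N$ is even if and only if $|E(X)|$ and $\frac12 n\operatorname{deg}u$ have the same parity, which is the second claim.

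\textbf{Main obstacle.} The only point needing care is Step 1: confirming that pairs with a common neighbour lie in the same part, and hence that their phases differ by a real sign. Once that is established, the rest is bookkeeping.
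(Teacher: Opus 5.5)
Your proposal is correct and matches the paper's proof essentially step for step. Like the paper, you use the bipartite form of the target state from Proposition~\ref{1}(1) to get $\cos(\theta_j-\theta_\ell)=\pm1$ whenever $c_{j,\ell}\neq 0$, deduce that $\frac12 n\operatorname{deg}u\in\Z$ from (\ref{Eq:cos1}), and then reduce modulo $2$ using $\sum_{j>\ell}c_{j,\ell}=\sum_w\binom{\operatorname{deg}w}{2}$ together with the fact that $\binom d2$ is odd exactly when $d\equiv 2,3 \pmod 4$.
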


\begin{proof}
By Proposition~\ref{1}, the vector $\bmu$ in Lemma~\ref{Lem:no1col} satisfies $\theta_j=\frac{m_j\pi}{2}$ for each $j$, where $m_j$ is some integer. Thus, $\cos(\theta_j-\theta_{\ell})$ is an integer for all $j,\ell\in V(X)$ with $j>\ell$. Invoking (\ref{Eq:cos1}) in Corollary~\ref{Cor:eqs}, we find that $\frac{1}{2}n\operatorname{deg}u$ is an integer. Hence, $n$ or $\operatorname{deg}u$ is even. To complete the proof, we utilize (\ref{Eq:cos1}), which states that
\begin{equation}
\label{eq0}
|E(X)|+\sum_{j>\ell}\cos(\theta_j-\theta_{\ell})c_{j,\ell}=\frac{1}{2}n\operatorname{deg}u.
\end{equation}
If $j$ and $\ell$ belong to different parts, then $c_{j,\ell}=0$ because $X$ is bipartite. On the other hand, if $j$ and $\ell$ belong to the same parts, then $c_{j,\ell}$ may or may not be equal to 0. For the case when $c_{j,\ell}\neq 0$, Proposition~\ref{1}(1) implies that $\cos(\theta_j-\theta_{\ell})=\pm 1$. Thus, $\sum_{j>\ell}\cos(\theta_j-\theta_{\ell})c_{j,\ell}$ has the same parity as $\sum_{j>\ell}c_{j,\ell}$. 
If $v$ is a pendent vertex in $X$, then $v$ is not a common neighbor of any two vertices. Otherwise, $\operatorname{deg}v\geq 2$ and there are ${\operatorname{deg}v\choose 2}$ distinct pairs of vertices having $v$ as a common neighbor. Thus, 
\begin{equation}
\label{q}
\sum_{j>\ell}c_{j,\ell}=\sum_{{v\in V(X)}\atop{\operatorname{deg}v\geq 2}}{\operatorname{deg}v\choose 2},
\end{equation}
where the expression on the right counts the total number of common neighbors of every pair of vertices in $T$. Finally, since $\sum_{j>\ell}\cos(\theta_j-\theta_{\ell})c_{j,\ell}$ and $\sum_{j>\ell}c_{j,\ell}$ have the same parities, and ${\operatorname{deg}v\choose 2}$ is even if and only if $\operatorname{deg}v\equiv 0,1$ (mod 4). Thus, $\sum_{j>\ell}\cos(\theta_j-\theta_{\ell})c_{j,\ell}$ is even if and only if there is an even number of vertices in $X$ with degree $d\equiv 2,3$ (mod 4). Combining this with (\ref{eq0}) yields the desired conclusion.
\end{proof}

\section{Planar graphs}\label{sec:plan}

\begin{figure}
\begin{center}
\begin{tikzpicture}[scale=.5,auto=left]
                       \tikzstyle{every node}=[circle, thick, fill=white, scale=0.75]

                \node[draw,minimum size=0.7cm, inner sep=0 pt] (1) at (1.5, 0) {$v$};
		        \node[draw,minimum size=0.7cm, inner sep=0 pt] (3) at (-1, 1) {$u$};
		        \node[draw,minimum size=0.7cm, inner sep=0 pt] (5) at (-1, -1) {$w$};
				\node at (6,0.5) {$H$};
				\draw[dotted] (3,0.4) circle (2.35 cm);
								
				\draw [thick, black!70] (5)--(1)--(3);

				\draw[thick, black!70] (1)..controls (3,4) and (8,0)..(1);

				\end{tikzpicture}		
\end{center}
\caption{\label{fig1} A graph $X$ with pendent vertices $u$ and $w$ sharing a common neighbor $v$.}
\end{figure}
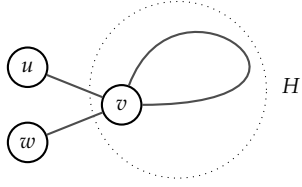

\begin{theorem}
\label{lumplntr}
Let $M\in\{A,L,Q\}$. Almost all connected planar graphs and almost all trees contain a vertex that does not admit local $\epsilon$-uniform mixing for any assignment of edge weights.
\end{theorem}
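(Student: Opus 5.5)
The plan is to exhibit, in almost every connected planar graph and almost every tree, a vertex with a twin, and then apply Corollary~\ref{twin}. The figure just before the statement suggests the configuration: two pendent vertices $u$ and $w$ attached to a common neighbour $v$. Then $N(u)=N(w)=\{v\}$, so $u$ and $w$ are false twins. The twin definition also requires the edges $\{u,v\}$ and $\{w,v\}$ to have equal weights, and that weight condition is the issue for the phrase ``for any assignment of edge weights''; I return to it below. Granting it, Corollary~\ref{twin} shows that once $n\geq 5$, local $\epsilon$-uniform mixing does not occur at $u$ relative to $M\in\{A,L,Q\}$.

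The combinatorial input is the known structure theory of random graphs from these classes. For random labelled trees (and likewise unlabelled trees), classical results of Schwenk and of McKay show that for any fixed finite rooted tree, almost all trees contain a limb isomorphic to it. Taking the limb to be a cherry, a vertex with exactly two pendent neighbours, gives the configuration of Figure~\ref{fig1} with high probability. For connected planar graphs, McDiarmid, Steger and Welsh proved that a random connected planar graph on $n$ vertices contains, with probability tending to one, linearly many pendant copies of any fixed connected planar graph attached by a single vertex. Applying this to the path $P_3$ attached at its centre produces a vertex $v$ carrying two pendent neighbours $u,w$. Since these graphs have $n\to\infty$ vertices, the bound $n\geq 5$ is automatic.

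The main obstacle is the quantifier ``for any assignment of edge weights''. If the weights on $\{u,v\}$ and $\{w,v\}$ differ, then $u$ and $w$ are not twins in the paper's sense. Corollary~\ref{twin} then no longer applies directly, and the eigenvector $\be_u-\be_w$ is lost. My fallback is to use the vector $\bx=b\be_u-a\be_w$, where $a$ and $b$ are the weights of $\{u,v\}$ and $\{w,v\}$. This vector is a null vector of $A$, since it is orthogonal to row $v$ and the rows of $u$ and $w$ are multiples of $\be_v$. Theorem~\ref{lum} applied to $\bx$ at $u$ gives $\sqrt{n}\,b\le a+b$, which only fails when $b$ is large compared with $a$.

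So I would try a stronger pendent structure: almost all trees and planar graphs contain a vertex with many pendent neighbours, say at least $k$ for any fixed $k$. With $k$ pendent neighbours of weights $a_1,\ldots,a_k$, choose $u$ to be a pendent vertex of minimal incident weight, so its weight $a_u$ satisfies $a_u\leq a_j$ for every $j$. For any other pendent neighbour $w_j$ of weight $a_j$, use $\bx=a_j\be_u-a_u\be_{w_j}$, which is again an eigenvector of $A$. Theorem~\ref{lum} then gives $\sqrt n\,a_j\le a_u+a_j\le 2a_j$, which is false for $n>4$. This kills $u$ for every weighting when $M=A$.

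For $L$ and $Q$ the same vector is no longer an eigenvector unless $a_u=a_j$, and I would repair this with Theorem~\ref{Cor:avedeg}-type reasoning. I expect this weighted $L$/$Q$ case to be the hardest step, and I would double-check which ``weight'' convention the definition of twins really needs.
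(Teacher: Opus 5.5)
For $M=A$ your argument is correct and is essentially the paper's. The paper uses the same cherry (from the same limb and pendant-copy results), the null vector $\be_u-\frac{\alpha}{\beta}\be_w$, and Theorem~\ref{lum} applied at $u$ and at $w$, splitting on whether $\alpha/\beta\le 1$. Your passage to $k$ pendent neighbours is unnecessary. With just the cherry, let $u$ be the pendent vertex of smaller weight, $a\le b$. Then your inequality $\sqrt{n}\,b\le a+b\le 2b$ already fails for $n\ge 5$.

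The genuine gap is $M\in\{L,Q\}$ when the two pendent edges carry different weights $\alpha\neq\beta$. Corollary~\ref{twin} does need equal weights, since the twin definition requires them. Your proposed repair cannot work. Theorem~\ref{Cor:avedeg} is stated for unweighted graphs. Its weighted analogue, Lemma~\ref{Lem:no1col}(3) with $r=1$, only gives $n\,d_u\le 4W$, where $d_u$ is the weighted degree and $W$ the total edge weight, and a light pendent vertex satisfies this.

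Worse, when $\alpha\neq\beta$ no eigenvector of $L$ or $Q$ is supported on $\{u,w\}$. If $\bx$ is supported there, then $(L\bx)_u=\alpha x_u$ and $(L\bx)_w=\beta x_w$. Row $v$ gives $\alpha x_u+\beta x_w=0$, so both entries are nonzero, which forces $\lambda=\alpha=\beta$; the same holds for $Q$. So Theorem~\ref{lum} cannot be fed an eigenvector living on the cherry. A repair must use eigenvectors spread over the rest of the graph, or a different necessary condition.

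The paper's proof does not fill this hole. It asserts that $\bv=\be_u-\frac{\alpha}{\beta}\be_w$ is an eigenvector for $L$ and $Q$ as well. But with $D$ the weighted degree matrix (the convention behind $L=kI-A$ for weighted $k$-regular graphs), one computes $L\bv=Q\bv=\alpha(\be_u-\be_w)$. This is a multiple of $\bv$ only when $\alpha=\beta$. So for $L$ and $Q$, both your argument and the paper's establish the statement only for weightings that give the two pendent edges equal weight, in particular for unweighted graphs. The claim for arbitrary weights needs a genuinely new ingredient.
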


\begin{proof}
For the case for planar graphs, adapting the proof of \cite[Theorem 8]{godsil2025quantum}, we get that almost all connected planar graphs have the same form as the graph in Figure \ref{fig1}, where $H$ is a connected planar graph. Let $\alpha$ and $\beta$ be the weights of edges $\{u,v\}$ and $\{w,v\}$. One checks that $\bv=\be_u-\frac{\alpha}{\beta}\be_w$ is an eigenvector for $M\in\{A,L,Q\}$. If vertex $u$ does not admit local $\epsilon$-uniform mixing, then we are done. Suppose it does. Our goal is to show that if vertex $w$ also admits local $\epsilon$-uniform mixing, then $n\leq 4$. Assume $w$ admits local $\epsilon$-uniform mixing. Applying Theorem~\ref{lum} to vertices $u$ and $w$ with $\bv=\be_u-\frac{\alpha}{\beta}\be_w$ yields $\sqrt{n}\leq 1+\frac{|\alpha|}{|\beta|}$ and $\sqrt{n}\frac{|\alpha|}{|\beta|}\leq1+\frac{|\alpha|}{|\beta|}.$
If $\frac{|\alpha|}{|\beta|}\leq 1$, then the former equation yields $\sqrt{n}\leq 2$. If $\frac{|\alpha|}{|\beta|}>1$, then the latter equation yields $\sqrt{n}\leq 1+\frac{|\beta|}{|\alpha|}<2$, and so $\sqrt{n}<2$. In both cases, we get $n\leq 4$. Thus, if $n\geq 5$, then any weighting of almost all planar graphs yields a vertex that does not admit local $\epsilon$-uniform mixing. For the case of trees, adapting the proof of \cite[Theorem 9]{godsil2025quantum}, we get that almost all trees $T$ have $P_3$ as a limb (rooted at the degree two vertex). That is, almost all trees have the same form as the graph $X$ in Figure \ref{fig1}, where $H$ is a tree. Applying the same argument yields the same conclusion.
\end{proof}

Let $X$ be an unweighted graph on $n$ vertices. If $X$ is
$k$-cyclic, then $|E(X)|=n-1+k$, and so $\overline{d}=2+\frac{2(k-1)}{n}$. If $X$ is planar, then $|E(X)|\leq 3n-6$, and so $\overline{d}=\frac{2(3n-6)}{n}$. It is also known that if $X$ is a triangle-free planar graph, then $|E(X)|\leq 2n-4$ for all $n\geq 3$. Furthermore, it is shown in \cite{dowden2016extremal} that if $X$ is a $C_4$- or $C_5$-free planar graph, then $|E(X)|\leq \frac{15}{7}(n-2)$ for all $n\geq 4$ or $|E(X)|\leq \frac{12n-33}{5}$ for all $n\geq 11$, respectively. Using these facts, we prove the following result.

\begin{corollary}
\label{Cor:tree}
Let $M\in\{L,Q\}$ and $X$ be an unweighted graph.\hspace{-0.02in} If local $\epsilon$-uniform mixing occurs at $u$ then:
\begin{enumerate}
\item If $X$ is $k$-cylic, then $\operatorname{deg}u\leq 4+\frac{4(k-1)}{n}$. In particular, if $X$ is a tree, then $\operatorname{deg}u\leq 3$, while if $0\leq 4(k-1)<n$ (e.g., when $X$ is unicyclic), then $\operatorname{deg}u\leq 4$.
\item If $X$ is a planar graph, then $\operatorname{deg}u\leq 12-\frac{24}{n}$. In particular, $\operatorname{deg}u\leq 9$ whenever $n\leq 11$, $\operatorname{deg}u\leq 10$ whenever $12\leq n\leq 23$, and $\operatorname{deg}u\leq 11$ otherwise. 
\item If $X$ is a triangle-free planar graph with $n\geq 3$ vertices, then $\operatorname{deg}u\leq 8-\frac{16}{n}$. In particular, $\operatorname{deg}u\leq 5$ whenever $n\leq 7$, $\operatorname{deg}u\leq 6$ whenever $8\leq n\leq 15$, and $\operatorname{deg}u\leq 7$ otherwise.
\item If $X$ is a $C_4$-free planar graph  with $n\geq 4$ vertices, then $\operatorname{deg}u\leq \frac{60}{7n}(n-2)$. In particular, $\operatorname{deg}u\leq 6$ whenever $ n\leq 10$, $\operatorname{deg}u\leq 7$ whenever $11\leq n\leq 29$, and $\operatorname{deg}u\leq 8$ otherwise.
\item If $X$ is a $C_5$-free planar graph with $n\geq 11$ vertices, then $\operatorname{deg}u\leq \frac{4}{5n}(12n-33)$. In particular, $\operatorname{deg}u\leq 7$ whenever $n\leq 16$, $\operatorname{deg}u\leq 8$ whenever $17\leq n\leq 43$, and $\operatorname{deg}u\leq 9$ otherwise.
\end{enumerate}
\end{corollary}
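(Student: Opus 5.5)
The plan is to derive every part from Theorem~\ref{Cor:avedeg}, which gives $\operatorname{deg}u\leq 2\overline{d}=\frac{4|E(X)|}{n}$, combined with the edge-count bounds recalled just before the statement. Each item then follows by substituting the relevant upper bound on $|E(X)|$ and using that $\operatorname{deg}u$ is an integer. Finally, we read off the thresholds on $n$ at which the floor of the bound changes.

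\textbf{Item 1.} For a $k$-cyclic graph, $|E(X)|=n-1+k$, so $\operatorname{deg}u\leq \frac{4(n-1+k)}{n}=4+\frac{4(k-1)}{n}$. For a tree ($k=0$) this is $4-\frac{4}{n}<4$, hence $\operatorname{deg}u\leq 3$. If $0\leq 4(k-1)<n$, then $\frac{4(k-1)}{n}<1$, so $\operatorname{deg}u<5$, i.e.\ $\operatorname{deg}u\leq 4$.

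\textbf{Items 2--5.} We substitute the appropriate edge bound:
\begin{itemize}
\item planar graphs: $|E(X)|\leq 3n-6$, giving $12-\frac{24}{n}$;
\item triangle-free planar graphs ($n\geq 3$): $|E(X)|\leq 2n-4$, giving $8-\frac{16}{n}$;
\item $C_4$-free planar graphs ($n\geq 4$): $|E(X)|\leq \frac{15}{7}(n-2)$, giving $\frac{60}{7n}(n-2)$;
\item $C_5$-free planar graphs ($n\geq 11$): $|E(X)|\leq \frac{12n-33}{5}$, giving $\frac{4}{5n}(12n-33)$.
\end{itemize}
Each bound is an increasing function of $n$. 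So the ``in particular'' claims reduce to finding, for each integer $c$, the largest $n$ with bound $<c+1$.

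\textbf{Thresholds.} The only real work is checking these cutoffs.
\begin{itemize}
\item Planar: $12-\frac{24}{n}<10$ iff $n<12$, and $<11$ iff $n<24$. Since the bound is always $<12$, we get $\leq 11$ otherwise.
\item Triangle-free: $8-\frac{16}{n}<6$ iff $n<8$, and $<7$ iff $n<16$.
\item $C_4$-free: $\frac{60(n-2)}{7n}<7$ iff $11n<120$, i.e.\ $n\leq 10$. Also $<8$ iff $4n<120$, i.e.\ $n<30$. The bound is always $<\frac{60}{7}<9$.
\item $C_5$-free: $\frac{4(12n-33)}{5n}<8$ iff $8n<132$, i.e.\ $n\leq 16$. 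Also $<9$ iff $3n<132$, i.e.\ $n<44$. The bound is always $<\frac{48}{5}<10$.
\end{itemize}

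These inequalities are routine, and the main risk is an off-by-one at a boundary case where the bound equals an integer exactly. For instance, at $n=12$ the planar bound equals $10$ exactly, which is consistent with $\operatorname{deg}u\leq 10$ in the range $12\leq n\leq 23$. I would verify each boundary value of $n$ directly.
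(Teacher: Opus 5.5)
Your proposal is correct and follows the paper's argument exactly: substitute each edge-count bound into $\operatorname{deg}u\leq 2\overline{d}=4|E(X)|/n$ from Theorem~\ref{Cor:avedeg}. Your explicit threshold checks, which the paper leaves implicit, all come out right. The only caveat, which the paper shares, is that $|E(X)|\leq 3n-6$ needs $n\geq 3$, so item 2 tacitly requires $n\geq 3$; for instance, $K_2$ has uniform mixing relative to $L$ at $t=\pi/4$ although $12-\frac{24}{2}=0$.
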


\begin{proof}
If $X$ is a $k$-cyclic graph admitting local $\epsilon$-uniform mixing occurs at vertex $u$, then Theorem~\ref{Cor:avedeg} yields $\operatorname{deg}u\leq2\overline{d}=2\left(2+\frac{2(k-1)}{n}\right)$. This proves 1. Statements 2-5 follows using the same argument.
\end{proof}

The following result is immediate from Corollary~\ref{Cor:tree}.

\begin{corollary}
The following unweighted graphs do not admit $\epsilon$-uniform mixing relative to $L$ or $Q$.
\begin{enumerate}
\item A $k$-cyclic graph with maximum degree at least $d$, where $d=5+\frac{4(k-1)}{n}$ if $n$ divides $4(k-1)$ and $d=4+\lceil\frac{4(k-1)}{n}\rceil$ (e.g., a tree and unicyclic graph with maximum degree at least four and five, resp.).
\item A triangle-free, a $C_4$-free and a $C_5$-free unweighted planar graph with $n\geq 3$, $n\geq 4$ and $n\geq 11$ vertices, and maximum degree at least seven, eight and nine, respectively.
\end{enumerate}
\end{corollary}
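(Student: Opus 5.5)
The plan is to reduce everything to Corollary~\ref{Cor:tree} applied at a vertex of maximum degree. If $X$ admits $\epsilon$-uniform mixing relative to $M\in\{L,Q\}$, then by definition local $\epsilon$-uniform mixing occurs at every vertex. In particular it occurs at a vertex $u$ with $\operatorname{deg}u=\Delta$. So each item of Corollary~\ref{Cor:tree} becomes an upper bound on $\Delta$, and the statement follows once we check that the prescribed threshold $d$ strictly exceeds the largest integer allowed by that bound.

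For part 1, Corollary~\ref{Cor:tree}(1) gives $\Delta\leq 4+\frac{4(k-1)}{n}$. I would split into two cases.
\begin{enumerate}
\item If $n$ divides $4(k-1)$, the right-hand side is an integer. Hence any $\Delta\geq 5+\frac{4(k-1)}{n}$ is excluded.
\item If $n$ does not divide $4(k-1)$, the largest admissible integer is $4+\lfloor\frac{4(k-1)}{n}\rfloor<4+\lceil\frac{4(k-1)}{n}\rceil$. Hence $\Delta\geq 4+\lceil\frac{4(k-1)}{n}\rceil$ is excluded.
\end{enumerate}
For the examples, unicyclic means $k=1$, so $4(k-1)=0$ is divisible by $n$ and $d=5$. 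For trees, $k=0$ and $\frac{4(k-1)}{n}=-\frac{4}{n}$. When $n\geq 5$ we have $\lceil -4/n\rceil=0$, giving $d=4$. When $n=4$ the quantity $-4/n=-1$ is an integer, giving $d=5-1=4$. The small cases $n\leq 3$ are vacuous, since $\Delta\leq 2$ there.

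For part 2, I would use Corollary~\ref{Cor:tree}(3)--(5) in the same way. The bounds $8-\frac{16}{n}$, $\frac{60}{7n}(n-2)$ and $\frac{4}{5n}(12n-33)$ are strictly less than $8$, $\frac{60}{7}<9$ and $\frac{48}{5}<10$, respectively, for all $n$ in the stated ranges. Taking integer parts gives the excluded maximum degrees.

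The one step I expect to require care is matching these integer thresholds with the numbers in the statement. The direct computation above excludes $\Delta\geq 8,9,10$, which is one more than the stated seven, eight, nine. To reach the stated values, I would first look at the equality case of Theorem~\ref{Cor:avedeg}. Equality there forces $\cos(\theta_j-\theta_\ell)=\mp1$ on every edge in Corollary~\ref{Cor:eqs}(2--3), and also forces the extremal edge counts for each planar class. I would try to rule this out, or else show that the $n$-dependent bounds already drop below the integer in the relevant ranges, as in the ``in particular'' clauses of Corollary~\ref{Cor:tree}. If neither works, the conclusion provable by this route is the off-by-one version, i.e.\ exclusion of $\Delta\geq 8,9,10$.
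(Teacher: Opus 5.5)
Your reduction is the paper's own: the paper declares this corollary immediate from Corollary~\ref{Cor:tree}, i.e.\ from Theorem~\ref{Cor:avedeg} applied at a vertex of maximum degree. Your treatment of part~1 is correct, including the integer/non-integer split of $4+\frac{4(k-1)}{n}$ and the tree and unicyclic examples.

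In part~2 your arithmetic is right, and the off-by-one discrepancy you found is genuine. It is the paper's error, not yours. The ``in particular'' clauses of Corollary~\ref{Cor:tree}(3)--(5) themselves allow $\operatorname{deg}u=7,8,9$ once $n\geq 16$, $n\geq 30$, $n\geq 44$, respectively. So Corollary~\ref{Cor:tree} delivers the printed thresholds seven, eight, nine only for $n\leq 15$, $n\leq 29$, $n\leq 43$. Over the full stated ranges it excludes $\Delta\geq 8,9,10$ and nothing more.

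Your equality-case repair cannot bridge this gap. Take a triangle-free planar graph with $n\geq 17$ vertices and $2n-4$ edges, the maximum the planar bound permits. Then $2\overline{d}=8-\frac{16}{n}>7$. At a vertex of degree $7$, the identity in Corollary~\ref{Cor:eqs}(2)--(3) only requires $\sum_{\{j,\ell\}\in E(X)}\cos(\theta_j-\theta_\ell)$ to take a value strictly inside $[-|E(X)|,|E(X)|]$. The equality configuration $\cos(\theta_j-\theta_\ell)=\mp1$ on every edge is therefore not forced, and an equality-case argument has nothing to contradict. The same slack appears for $\Delta=8$ in $C_4$-free planar graphs and for $\Delta=9$ in $C_5$-free planar graphs, for all sufficiently large $n$.

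Excluding these degrees would need input beyond Corollary~\ref{Cor:eqs} and the edge bounds, and neither you nor the paper supplies it. You should therefore present the corrected version, exclusion of $\Delta\geq 8,9,10$, as what this argument proves, rather than as a fallback.
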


Recall that $c_{j,\ell}$ denotes the number of common neighbors of vertices $j$ and $\ell$. It is known that a graph is $C_4$-free if and only if every pair of vertices has at most one common neighbor, i.e., $c_{j,\ell}\leq 1$ for all $j,\ell\in V(X)$. In what follows, we let $q$ be the number of pairs of vertices in $X$ that are distance two. 

\begin{theorem}
\label{Cor:avedeg1}
Let $M=A$ and $X$ be an unweighted $C_4$-free graph. If local $\epsilon$-uniform mixing occurs at $u$ in $X$, then $\operatorname{deg}u\leq \frac{2}{n}\left(|E(X)|+q\right)$. 
\end{theorem}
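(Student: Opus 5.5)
We plan to apply equation (\ref{Eq:cos1}) from Corollary~\ref{Cor:eqs}(1). Let $\bmu=[e^{\ii\theta_1},\ldots,e^{\ii\theta_n}]^T$ be the target state given by Proposition~\ref{proplum}. Since $X$ is unweighted and $M=A$, we have
\begin{center}
$|E(X)|+\sum_{j>\ell}\cos(\theta_j-\theta_{\ell})c_{j,\ell}=\frac{1}{2}n\operatorname{deg}u.$
\end{center}
The task is therefore to bound the middle sum from above by $q$.

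First we use the $C_4$-free hypothesis. For every pair $j\neq \ell$ it gives $c_{j,\ell}\in\{0,1\}$. Next we check which pairs can have $c_{j,\ell}=1$. A common neighbor forces $d(j,\ell)\leq 2$. Such a pair is either at distance exactly two, or adjacent. The adjacent case is a triangle $j\ell w$, where $w$ is the unique common neighbor.

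Then we use $\cos(\theta_j-\theta_\ell)\leq 1$ termwise, which is valid since each $c_{j,\ell}\geq 0$. This yields
\begin{center}
$\sum_{j>\ell}\cos(\theta_j-\theta_{\ell})c_{j,\ell}\leq \#\{\{j,\ell\}: c_{j,\ell}=1\}.$
\end{center}
Since $c_{j,\ell}\geq 0$, we cannot drop the terms with $\cos<0$ directly, but bounding each term by $c_{j,\ell}$ itself is fine.

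The main obstacle is the adjacent pairs lying in triangles, which $q$ does not count. I would handle these pairs by also using (\ref{Eq:cos}). That equation says $\sum_{\{j,\ell\}\in E(X)}\cos(\theta_j-\theta_\ell)=0$, but it does not directly control the sum over triangle edges only. As a fallback, I would bound the count of pairs with $c_{j,\ell}=1$ by $q$ plus the number of edges in triangles. The intended statement likely reads $q$ as the number of pairs with a common neighbor, including adjacent ones. Under that reading, the count is exactly $q$ and the proof is immediate. Combining the pieces gives $\frac{1}{2}n\operatorname{deg}u\leq |E(X)|+q$, i.e., $\operatorname{deg}u\leq\frac{2}{n}(|E(X)|+q)$.
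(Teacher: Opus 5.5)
Your argument is the paper's argument: the paper's proof of Theorem~\ref{Cor:avedeg1} is a single line, using $|\cos(\theta_j-\theta_\ell)|\le 1$ and $c_{j,\ell}\le 1$ in (\ref{Eq:cos1}). The difficulty you isolate is real, and the paper does not address it either. With $q$ defined as the number of pairs at distance two, the pairs with $c_{j,\ell}=1$ in a $C_4$-free graph are of two kinds:
\begin{itemize}
\item the $q$ pairs at distance two;
\item the edges lying in a triangle.
\end{itemize}
Two triangles sharing an edge would form a $C_4$, so there are exactly $3t$ such edges, where $t$ is the number of triangles. Consequently, your proposal and the paper's proof both establish only
\[
\operatorname{deg}u\le \tfrac{2}{n}\bigl(|E(X)|+q+3t\bigr).
\]
This is the stated bound when $X$ is triangle-free (girth at least five), or when $q$ is redefined as the number of pairs with a common neighbour. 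Say this explicitly, rather than presenting the reinterpretation of $q$ as a proof of the statement as written.

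Your idea of using (\ref{Eq:cos}) does give a partial improvement. Let $T$ be the set of triangle edges. Then
\[
\sum_{T}\cos(\theta_j-\theta_\ell)=-\sum_{E(X)\setminus T}\cos(\theta_j-\theta_\ell)\le |E(X)|-3t,
\]
so $3t$ can be replaced by $\min\{3t,|E(X)|-3t\}$. This recovers the literal statement when every edge lies in a triangle. An example is $k$ triangles sharing a common vertex, which is the cone over a $1$-regular graph. Its apex admits local uniform mixing and attains equality:
\[
\operatorname{deg}u=2k=\tfrac{2}{2k+1}\bigl(3k+2k(k-1)\bigr).
\]
For $C_4$-free graphs that have both triangle and non-triangle edges, neither your argument nor the paper's closes the gap. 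An additional idea, or a counterexample, is needed there.
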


\begin{proof}
As $|\cos(\theta_j-\theta_{\ell})|\leq 1$ and $c_{j,\ell}\leq 1$ for all $j,\ell\in V(X)$, Corollary~\ref{Cor:eqs}(1) yields the desired result. 
\end{proof}

\begin{corollary}
\label{umfam}
Let $M=A$ and $X$ be an unweighted graph. If local $\epsilon$-uniform mixing occurs at $u$, then:
\begin{enumerate}
\item If $X$ is a tree, then $\operatorname{deg}u\leq  \frac{2}{n}(n-1+q)$.
\item Let $X$ be a unicyclic graph. If $X$ has no $C_4$, then $\operatorname{deg}u\leq 2+\frac{2q}{n}$. Otherwise, $\operatorname{deg}u\leq  2+\frac{2q+4}{n}$.
\item If $X$ is a $C_4$-free planar graph, then $\operatorname{deg}u\leq \frac{30(n-2)}{7n}+\frac{2q}{n}$. 
\end{enumerate}
\end{corollary}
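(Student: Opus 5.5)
The plan is to reduce each item to the inequality in Theorem~\ref{Cor:avedeg1}, or more precisely to equation (\ref{Eq:cos1}) of Corollary~\ref{Cor:eqs}(1) when $X$ is not $C_4$-free. Since $M=A$, Corollary~\ref{Cor:eqs}(1) gives
\[
\tfrac12 n\operatorname{deg}u=|E(X)|+\sum_{j>\ell}\cos(\theta_j-\theta_\ell)c_{j,\ell}\le |E(X)|+\sum_{j>\ell}c_{j,\ell}.
\]
So in every case it suffices to bound $|E(X)|$ and $\sum_{j>\ell}c_{j,\ell}$. Note that $c_{j,\ell}\neq 0$ only for pairs at distance at most two. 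Moreover, $c_{j,\ell}=0$ for adjacent $j,\ell$ exactly when $X$ is triangle-free.

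For item 1, a tree is $C_4$-free and has $|E(X)|=n-1$. Theorem~\ref{Cor:avedeg1} then gives $\operatorname{deg}u\le \frac{2}{n}(n-1+q)$ directly. Since a tree is triangle-free, adjacent pairs have no common neighbours, so the bound really counts only the $q$ distance-two pairs, each with exactly one common neighbour.

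For item 2, a unicyclic graph has $|E(X)|=n$. If $X$ has no $C_4$, Theorem~\ref{Cor:avedeg1} gives $\operatorname{deg}u\le\frac2n(n+q)=2+\frac{2q}{n}$. Otherwise the unique cycle is a $C_4$, say $abcd$. I will check that the only pairs with $c_{j,\ell}\ge 2$ are the diagonals $\{a,c\}$ and $\{b,d\}$, each with exactly two common neighbours, because any further common neighbour would create a second cycle. The graph is then triangle-free, so all pairs with common neighbours are at distance two. This gives $\sum_{j>\ell}c_{j,\ell}\le q+2$, hence $\tfrac12 n\operatorname{deg}u\le n+q+2$, i.e.\ $\operatorname{deg}u\le 2+\frac{2q+4}{n}$.

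For item 3, combine Theorem~\ref{Cor:avedeg1} with the bound $|E(X)|\le\frac{15}{7}(n-2)$ for $C_4$-free planar graphs from \cite{dowden2016extremal}. This gives $\operatorname{deg}u\le \frac{30(n-2)}{7n}+\frac{2q}{n}$. That extremal bound needs $n\ge4$; for $n\le 3$ the claim should be checked by hand, or one notes the statement implicitly assumes $n\ge4$.

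The main obstacle is the bookkeeping for $C_4$-free graphs. Adjacent vertices may share a common neighbour when there are triangles, so as written $\sum_{j>\ell}c_{j,\ell}$ can exceed $q$. Theorem~\ref{Cor:avedeg1} appears to bound the sum by $q$, so for item 3 I would either accept that as given, or interpret $q$ as the number of pairs with a common neighbour and note this coincides with the distance-two count in the triangle-free cases of items 1–2. The other delicate point is verifying that the $C_4$ in a unicyclic graph contributes exactly $2$ extra.
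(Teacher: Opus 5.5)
Your argument is the paper's own proof. You use Theorem~\ref{Cor:avedeg1} for item 1 and the $C_4$-free half of item 2. For a unicyclic graph containing a $C_4$, you count the two diagonals (each with $c_{j,\ell}=2$) in (\ref{Eq:cos1}). For item 3 you combine Theorem~\ref{Cor:avedeg1} with $|E(X)|\le\frac{15}{7}(n-2)$. The cases you check directly (trees, and unicyclic graphs containing a $C_4$) are triangle-free, and your counting there is correct.

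The triangle issue you flag is a genuine gap. It is present in the paper's proof of Theorem~\ref{Cor:avedeg1} as well, so ``accepting it as given'' does not close it. In a $C_4$-free graph, every edge lying in a triangle has exactly one common neighbour. Hence $\sum_{j>\ell}c_{j,\ell}=q+t$, where $t$ is the number of such edges.

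This already bites in item 2, contrary to your remark that items 1--2 are the triangle-free cases. A unicyclic graph with no $C_4$ may have a triangle as its cycle. Then (\ref{Eq:cos1}) alone gives only $\frac12 n\operatorname{deg}u\le n+q+3$.

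Bringing in (\ref{Eq:cos}) helps only partially. The sum of $\cos(\theta_j-\theta_\ell)$ over triangle edges equals minus the sum over the remaining edges. So the argument yields at best $\operatorname{deg}u\le\frac{2}{n}\bigl(|E(X)|+q+\min\{t,|E(X)|-t\}\bigr)$. This recovers the stated bounds only when $t=0$ or $t=|E(X)|$. Otherwise you must either redefine $q$ as the number of pairs having a common neighbour, or supply a new argument.

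Your small-$n$ caveat for item 3 is not a formality either. $K_2$ has uniform mixing at $\tau=\pi/4$, and $K_3$ has it at $\tau=2\pi/9$. Both are $C_4$-free planar graphs with $q=0$, and both violate the bound, since $1>0$ and $2>\frac{10}{7}$. So the hypothesis $n\ge 4$ must actually be added to item 3.
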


\begin{proof}
If $X$ is a tree or a unicyclic graph with no $C_4$, then Theorem~\ref{Cor:avedeg1} yields 1 and the first statement in 2. Now, if $X$ is a unicyclic graph with a $C_4$, then there are exactly two pairs of vertices with two common neighbors. If vertex $u$ admits local uniform mixing, then Corollary~\ref{Cor:eqs}(1) combined with the facts $|\cos(\theta_j-\theta_{\ell})|\leq 1$ and $c_{j,\ell}\leq 1$ for all $j,\ell\in V(X)$ except for exactly two pairs, yields $\operatorname{deg}u\leq \frac{2}{n}(n+q+2)=2+\frac{2q+4}{n}$. This proves the second statement of 2. To prove 3, let $X$ be a $C_4$-free planar graph so that $|E(X)|\leq \frac{15}{7}(n-2)$. Applying Corollary~\ref{Cor:eqs}(1), we get $\operatorname{deg}u\leq \frac{2}{n}\left(|E(X)|+q\right)\leq \frac{2}{n}\left(\frac{15}{7}(n-2)+q\right)$.
\end{proof}

\section{Trees and unicyclic graphs}\label{sec:tu}

We further rule out $\epsilon$-uniform mixing in trees and unicyclic graphs. Throughout, we assume that $M=A$.

\begin{corollary}
\label{treeuni}
Let $X$ be an unweighted bipartite graph on an odd number of vertices. If $X$ has a pendent vertex $u$, then local $\epsilon$-uniform mixing does not occur  at $u$. In particular, if $X$ is a tree or a bipartite unicyclic graph that is not cycle, then $\epsilon$-uniform mixing does not occur at a pendent vertex.
\end{corollary}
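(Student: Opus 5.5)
Since $u$ is pendent, $\operatorname{deg}u=1$, and $X$ has an odd number $n$ of vertices. The first statement of Corollary~\ref{nodd} says that if an unweighted bipartite graph on $n$ vertices admits local $\epsilon$-uniform mixing at $u$, then $n$ or $\operatorname{deg}u$ is even. Here both $n$ and $\operatorname{deg}u=1$ are odd, so local $\epsilon$-uniform mixing cannot occur at $u$. If one wants this self-contained, the same parity argument runs directly through (\ref{Eq:cos1}). By Proposition~\ref{1}(1), every $\theta_j$ is an integer multiple of $\pi/2$, so each $\cos(\theta_j-\theta_\ell)$ is an integer. The left side of (\ref{Eq:cos1}) is therefore an integer, while the right side equals $\frac{1}{2}n$, which is not an integer for odd $n$. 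This is a contradiction.

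For the ``in particular'' part, I would check that the graphs listed are bipartite graphs to which the first statement applies. A tree is bipartite. A bipartite unicyclic graph is one whose unique cycle is even. The hypothesis ``not a cycle'' guarantees that a pendent vertex exists, since a unicyclic graph other than a cycle has a vertex of degree one. Trees on at least two vertices always have pendent vertices.

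The main obstacle is the parity hypothesis on $n$. As written, the ``in particular'' clause does not restate that $n$ is odd, and Corollary~\ref{nodd} gives nothing when $n$ is even. I would therefore read the second sentence under the standing assumption that $X$ has an odd number of vertices. I would also note that $\epsilon$-uniform mixing requires local $\epsilon$-uniform mixing at every vertex, and in particular at the pendent one, so the first statement already excludes it. If the even case were intended, one would have to argue separately, for instance with Theorem~\ref{lum} applied to a suitable eigenvector supported near the pendent vertex. I expect that the intended content is the odd-$n$ case, which follows directly from Corollary~\ref{nodd}.
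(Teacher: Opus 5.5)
Your proposal is correct and follows the paper's proof: the first statement comes from the parity clause of Corollary~\ref{nodd} (equivalently, (\ref{Eq:cos1}) with integer cosines from Proposition~\ref{1}(1) forces $\frac{1}{2}n\operatorname{deg}u\in\Z$), and the second holds because trees and bipartite unicyclic graphs that are not cycles have pendent vertices. Reading the ``in particular'' clause under the odd-$n$ hypothesis is not just convenient but necessary: for even $n$ the clause is false, since $K_2$ and $K_{1,3}$ both have local uniform mixing at their pendent vertices, so no separate even-case argument via Theorem~\ref{lum} could succeed.
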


\begin{proof}
The first statement follows from Corollary~\ref{nodd}. Since any tree and bipartite unicyclic graph that is not a cycle possess at least one pendent vertex, the second statement is immediate.
\end{proof}

We now prove a result analogous to Corollary~\ref{treeuni} when $X$ has an even number of vertices.

\begin{theorem}
\label{unic1}
Let $X$ be an unweighted bipartite unicyclic graph $n$ vertices that is not a cycle, where $n$ is even. If $X$ admits $\epsilon$-uniform mixing, then there is an even number of vertices in $X$ with degree $d\equiv 2,3$ (mod 4) if and only if $n\equiv 0$ (mod 4).
\end{theorem}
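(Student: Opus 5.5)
The plan is to apply Corollary~\ref{nodd} at a pendent vertex of $X$. Since $X$ is unicyclic and not a cycle, it has at least one pendent vertex $u$, so $\operatorname{deg}u=1$. If $X$ admits $\epsilon$-uniform mixing, then local $\epsilon$-uniform mixing occurs at every vertex, and in particular at $u$. Since $X$ is an unweighted bipartite graph, Corollary~\ref{nodd} applies at $u$. It says that an even number of vertices have degree $d\equiv 2,3 \pmod 4$ if and only if $|E(X)|$ and $\frac{1}{2}n\operatorname{deg}u=\frac{n}{2}$ have the same parity. This quantity is an integer because $n$ is even, which is consistent with the first assertion of Corollary~\ref{nodd}.

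Next we compute $|E(X)|$. Since $X$ is unicyclic, $|E(X)|=n$, which is even. So the parity condition reads: $\frac{n}{2}$ is even, that is, $n\equiv 0 \pmod 4$. Substituting this into the equivalence from Corollary~\ref{nodd} gives exactly the stated biconditional.

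The only point needing care is a legitimate choice of $u$. We need $\operatorname{deg}u=1$, and this is where the hypothesis that $X$ is not a cycle enters. A connected unicyclic graph that is not a cycle has a tree attached to some cycle vertex, and every leaf of that tree is pendent. Connectedness of $X$ is automatic by Proposition~\ref{prop:conn}. Beyond this, the argument is a direct substitution into Corollary~\ref{nodd}, so I expect no real obstacle. The mild subtlety is that Corollary~\ref{nodd} is stated for local mixing at a single vertex, so the global hypothesis is used only at $u$. Hence the conclusion holds under the weaker assumption of local $\epsilon$-uniform mixing at one pendent vertex.
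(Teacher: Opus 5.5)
Your proof is correct and follows the paper's argument: both rest on Corollary~\ref{nodd}, on $|E(X)|=n$ being even, and on the existence of a pendent vertex. The paper applies Corollary~\ref{nodd} at every vertex and then uses the pendent vertex to rule out the alternative that all degrees are even with $n/2$ odd; you apply it directly at a pendent vertex, which also shows that local $\epsilon$-uniform mixing at a single pendent vertex already suffices.
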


\begin{proof}
Let $X$ be an unweighted bipartite unicyclic graph that is not a cycle. Suppose $n$ is even and $X$ admits $\epsilon$-uniform mixing. Since $\epsilon$-uniform mixing occurs at each vertex of $X$, Corollary~\ref{nodd} applies to every vertex $u$ of $X$. Hence, there is an even number of vertices in $X$ with degree $d\equiv 2,3$ (mod 4) if and only if $\frac{1}{2}n\operatorname{deg}u$ is even for each vertex $u$ of $X$. Equivalently, either $n\equiv 0$ (mod 4), or each vertex in $X$ has even degree and $\frac{n}{2}$ is odd. However, since $X$ is not a cycle, $X$ contains a pendent vertex, and so the latter condition does not hold. Thus, it must be that $n\equiv 0$ (mod 4).
\end{proof}

For singular trees, we have the following result.

\begin{theorem}
\label{thm:1tree}
Let $T$ be a singular unweighted tree on $n$ vertices with parts $B_1$ and $B_2$. If $T$ admits local $\epsilon$-uniform mixing at $u\in B_1$ and $0\in\supp_{\be_u}$, then $n$ is a perfect square, $|B_1|\geq \sqrt{n}$, and $n$ and $|B_1|$ have the same parities.
\end{theorem}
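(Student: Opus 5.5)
The plan is to reduce the statement to Corollary~\ref{cor:singularbip}, equivalently Proposition~\ref{prop7}(2c) with $\lambda=0$. The one extra ingredient needed is a null vector $\bv$ of $A(T)$ with entries in $\{-1,0,1\}$ and $\bv^T\be_u\neq 0$. The first conclusion, that $n$ is a perfect square, needs nothing special about trees: $\lambda=0$ gives $\cos(\lambda\tau')=1$, which is rational. Choosing an integer eigenvector for $0$ with nonzero $u$th entry (possible since $A$ is an integer matrix and $0\in\supp_{\be_u}$), Proposition~\ref{prop7}(2b) then yields that $n$ is a perfect square. This already matches the first part of Corollary~\ref{cor:singularbip}.

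For the inequality $|B_1|\geq\sqrt{n}$ and the parity statement, I would use the classical structure of the nullspace of a tree. The key fact to establish is that if $T$ is a tree and $0\in\supp_{\be_u}$, then $\ker A(T)$ contains a vector with entries in $\{-1,0,1\}$ whose $u$th entry is nonzero. I would argue by induction using maximum matchings, or equivalently by repeated pendant-edge deletion, along the following lines.

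First, $0\in\supp_{\be_u}$ means $E_0\be_u\neq\zero$. For trees this is equivalent to $u$ being missed by some maximum matching, i.e.\ $u$ is a ``core'' vertex in the Gallai--Edmonds sense. The equivalence follows since $\operatorname{null}(A(T))=n-2\nu(T)$, and $E_0\be_u\neq 0$ holds exactly when $\operatorname{null}(T-u)=\operatorname{null}(T)-1$, i.e.\ $\nu(T-u)=\nu(T)$.

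Second, I would construct the vector explicitly. Take any pendant vertex $p$ with neighbor $q$, with $p\neq u$ (possible, since a tree with at least two vertices has at least two leaves). If $u\neq q$, then removing $p,q$ keeps $u$ a core vertex in each component, and null vectors of $T-\{p,q\}$ that vanish on the neighbors of $q$ extend to null vectors of $T$ by putting $0$ on $p,q$. One must check that the component containing $u$ admits a $\{0,\pm1\}$ null vector that is nonzero at $u$ and has zero sum over $N(q)$; a sign flip of one component's vector can balance this. If $u=q$ instead, I would choose a different leaf, or use the simple local null vector $\be_p-\be_{p'}$ when two leaves share a neighbor.

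This construction step is the main obstacle, since the sign bookkeeping at the deleted vertex $q$ is delicate. As a fallback I would use the known result that trees have null vectors supported on core vertices with entries $\pm1$, built by alternating signs along the ``independent'' core structure (Neumaier / Sciriha). Once such a $\bv$ is available, with $\bv$ supported on $B_1$ since it is a $0$-eigenvector with nonzero $u$-entry (we may assume the form $[\bv_1,\zero]^T$ as noted at the start of Section~\ref{sec:bip}), Corollary~\ref{cor:singularbip} directly gives $\sqrt{n}\leq|B_1|$ and that $n$ and $|B_1|$ have the same parity. That completes the proof.
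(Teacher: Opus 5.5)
Your route is the paper's route. You obtain a $\{-1,0,1\}$ null vector $\bv$ with $\bv^T\be_u\neq 0$; the paper gets it by citing the $\{-1,0,1\}$-basis theorem for null spaces of forests, which is what your fallback amounts to. You then invoke Corollary~\ref{cor:singularbip}. The perfect-square conclusion and $|B_1|\geq\sqrt{n}$ are fine.

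The gap is the parity step, which you take from Corollary~\ref{cor:singularbip}. It comes from Proposition~\ref{prop7}(2c), whose proof tacitly assumes $\bv_1$ has no zero entries. The paper's own proof of this theorem has the same defect. To see what the argument actually gives, replace $\bv$ by $[\bv_1,\zero]^T$ and let $S=\{j: v_j\neq 0\}\subseteq B_1$. With $\lambda=0$ the limiting phase is $1$, so Proposition~\ref{proplum}(3) gives
\[
\sqrt{n}\,v_u=\bv_1^T\bmu_1=\sum_{j\in S}v_j(\bmu_1)_j ,
\]
a sum of $|S|$ terms each equal to $\pm1$. Hence $\sqrt{n}\leq |S|\leq |B_1|$ and $\sqrt{n}\equiv |S| \pmod 2$. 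Nothing ties this parity to $|B_1|$ unless $S=B_1$.

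In fact the parity assertion is false, so no argument can close this gap. Take $T=K_{1,3}$, with the leaves $1,2,3$ forming $B_1$ and the center $c$ forming $B_2$. Since $A^3=3A$,
\[
U(t)=I+\frac{\ii\sin(\sqrt{3}t)}{\sqrt{3}}A+\frac{\cos(\sqrt{3}t)-1}{3}A^2 .
\]
At $\tau=\frac{2\pi}{3\sqrt{3}}$ this gives $2U(\tau)\be_1=\be_1-\be_2-\be_3+\ii\be_c$. So local uniform mixing, and hence local $\epsilon$-uniform mixing, occurs at the leaf $u=1$. Moreover $T$ is singular and $E_0\be_1=\frac{1}{3}(2\be_1-\be_2-\be_3)\neq\zero$, so $0\in\supp_{\be_1}$. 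Yet $n=4$ is even and $|B_1|=3$ is odd. Every $\{-1,0,1\}$ null vector here has $|S|=2$, which is consistent with $\sqrt{n}\equiv|S|$.

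What your argument actually proves is this: $n$ is a perfect square, and for every $\{-1,0,1\}$ null vector nonzero at $u$ we have $\sqrt{n}\leq|S|\leq|B_1|$ and $n\equiv|S|\pmod 2$.

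Two side remarks on the construction step.
\begin{itemize}
\item Your induction is easier than you feared. Each component of $T-\{p,q\}$ contains exactly one neighbor $w_0$ of $q$, so you may set $x_q=0$ and $x_p=-y_{w_0}$ with no balancing.
\item The case $u=q$ cannot arise, because every null vector vanishes at the neighbor of a leaf.
\end{itemize}
Your fallback phrase should also be read as a spanning family of $\{0,\pm1\}$ null vectors. It cannot mean a single $\pm1$ vector on the whole core: in $K_{1,3}$, a $\pm1$ vector on the three leaves has odd coordinate sum, so it is not a null vector.
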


\begin{proof}
If $T$ is a singular tree, then the nullspace for $A$ contains a basis of eigenvectors, each having all entries in $\{-1,0,1\}$ \cite{akbari20061}. Thus, if $0\in\supp_{\be_u}$, then we can find an eigenvector $\bv$ for 0 with entries from $\{-1,0,1\}$. Invoking  Corollary~\ref{cor:singularbip} yields the desired result.
\end{proof}

\begin{corollary}
\label{cor:singularbip1}
Let $n$ be odd and $T$ be an unweighted tree on $n$ vertices with parts $B_1$ and $B_2$. If local $\epsilon$-uniform mixing occurs at vertex $u\in B_1$ and $0\in\supp_{\be_u}$, then $n$ is a perfect square, $|B_1|\geq \sqrt{n}$, $|B_1|$ is odd, and local $\epsilon$-uniform mixing does not occur at any vertex $w$ of $B_2$ such that $0\in\supp_{\be_w}$.
\end{corollary}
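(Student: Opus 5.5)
We derive the first three conclusions from Theorem~\ref{thm:1tree} and the last one by contradiction using parity.

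\textbf{Step 1: the statements about $B_1$.} The tree $T$ has an odd number of vertices. A bipartite graph with $|B_1|\neq|B_2|$ is singular, since $A$ has rank at most $2\min\{|B_1|,|B_2|\}<n$. So $T$ is singular. Theorem~\ref{thm:1tree} then applies to $u\in B_1$ with $0\in\supp_{\be_u}$. It gives that $n$ is a perfect square, that $|B_1|\geq\sqrt{n}$, and that $|B_1|\equiv n \pmod 2$. As $n$ is odd, $|B_1|$ is odd.

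\textbf{Step 2: setting up the contradiction.} Suppose local $\epsilon$-uniform mixing also occurs at some $w\in B_2$ with $0\in\supp_{\be_w}$. Applying Theorem~\ref{thm:1tree} with the roles of $B_1$ and $B_2$ swapped gives $|B_2|\equiv n\pmod 2$, so $|B_2|$ is odd. But then $n=|B_1|+|B_2|$ is even, contradicting the hypothesis that $n$ is odd.

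\textbf{Where care is needed.} Theorem~\ref{thm:1tree} relies on Corollary~\ref{cor:singularbip} and Proposition~\ref{prop7}(2c). The parity claim comes from the case $\cos(\lambda\tau')=\pm1$ at $\lambda=0$. This case is forced because $e^{\ii\tau_m\cdot 0}=1$, so $\cos(0\cdot\tau')=1$. Proposition~\ref{prop7} is stated for $u\in B_1$. For $w\in B_2$ we relabel the parts so that $B_2$ plays the role of $B_1$; the target state then has the form $[\ii\bmu_1,\bmu_2]^T$ given in Proposition~\ref{1}.

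We also need a $\{-1,0,1\}$ null vector with nonzero $w$th entry. Such a vector exists because the nullspace of a singular tree has a basis of $\{-1,0,1\}$-vectors \cite{akbari20061}, and $0\in\supp_{\be_w}$ forces some basis vector to be nonzero at $w$. With these points checked, the argument is complete.
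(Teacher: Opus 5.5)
Your proposal reproduces the paper's proof essentially verbatim, and it inherits a genuine flaw: the parity conclusion you import from Theorem~\ref{thm:1tree} is false. That parity comes from Proposition~\ref{prop7}(2c). There, $\sqrt{n}=|\bv_1^T\bmu_1|$, with $\bv_1$ a $\{-1,0,1\}$-vector and $\bmu_1$ a $\pm 1$-vector on $B_1$. This integer has the parity of $s(\bv_1)$, the number of nonzero entries of $\bv_1$, not the parity of $|B_1|$. Your ``where care is needed'' paragraph checks that the case $\cos(0\cdot\tau')=1$ applies, but it does not check this step.

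For a concrete counterexample to Theorem~\ref{thm:1tree}, take $K_{1,3}$ with center $c$, leaves $1,2,3$, and $u=1$, so $B_1$ is the set of leaves. At $t=\frac{2\pi}{3\sqrt{3}}$ one gets $U(t)\be_u=\frac{1}{2}[\ii,1,-1,-1]^T$ (center first). The null vector $\be_1-\be_2$ shows $0\in\supp_{\be_u}$, and $n=4$, yet $|B_1|=3$. Indeed $\bv_1^T\bmu_1=2=s(\bv_1)$.

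What the $\lambda=0$ argument really yields is this: $n$ is a perfect square, and for every $\{-1,0,1\}$ null vector $\bv=[\bv_1,\bv_2]^T$ with $v_u\neq 0$ we have $\sqrt{n}\le s(\bv_1)\le |B_1|$ and $s(\bv_1)\equiv\sqrt{n}\pmod 2$. So your first two conclusions stand. But the claim that $|B_1|$ is odd is not established by your argument (nor by the paper's), and it would need a new idea.

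The final conclusion can be rescued with the corrected parity. The key is information that both you and the paper discard: the $B_2$-block of $u$'s own target state $[\bmu_1,\ii\bmu_2]^T$. Suppose $w\in B_2$ also admits local $\epsilon$-uniform mixing, with target $[\ii\bmu_1',\bmu_2']^T$, and $0\in\supp_{\be_w}$. Take a $\{-1,0,1\}$ null vector with nonzero $w$-entry (from the forest basis result you cite) and restrict it to $B_2$. In a bipartite graph this restriction $[\zero,\by]^T$ is again a null vector, and $y_w\neq 0$.

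Apply Proposition~\ref{proplum}(3) at $u$ with $\lambda=0$. Since this vector vanishes at $u\in B_1$, it gives $\ii\,\by^T\bmu_2=0$, so $s(\by)$ is even. Applying the same result at $w$ gives $\sqrt{n}=|\by^T\bmu_2'|\equiv s(\by)\pmod 2$. Because $\sqrt{n}$ is an odd integer, $s(\by)$ is odd. This contradiction proves the last claim without any reference to the parities of $|B_1|$ or $|B_2|$.
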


\begin{proof}
If $n$ is odd, then $T$ is singular. As $0\in\supp_{\be_u}$, Theorem~\ref{thm:1tree} implies that $n$ is a perfect square, $|B_1|\geq \sqrt{n}$, and $|B_1|$ is odd (since $\sqrt{n}$ is). If local $\epsilon$-uniform mixing occurs at vertex $w$ in $B_2$ with $0\in\supp_{\be_w}$, then Theorem~\ref{thm:1tree} once again implies that $|B_2|$ is odd. Thus, $n=|B_1|+|B_2|$ is even, a contradiction.
\end{proof}

\begin{proposition}
\label{tree2}
If $T$ is an unweighted tree with no degree two vertex, then $T$ contains twin vertices.
\end{proposition}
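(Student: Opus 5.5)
The plan is to use a longest path in $T$ and show that its second vertex carries at least two pendent neighbours; any two such pendent vertices are false twins. I will treat the degenerate cases first. If $T=K_1$ there is nothing to twin, so the statement is understood for $n\ge 2$. If $T=K_2$, the two vertices $u,v$ satisfy $N(u)\backslash\{v\}=N(v)\backslash\{u\}=\varnothing$, so they are true twins. So assume $n\geq 3$.

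Let $P=v_0v_1\cdots v_k$ be a longest path in $T$; since $n\geq 3$ and $T$ is connected, $k\geq 2$. By maximality of $P$, $v_0$ is a pendent vertex; otherwise it would have a neighbour off $P$ (a neighbour on $P$ would create a cycle), and $P$ could be extended. The key step is to show that every neighbour $w\neq v_2$ of $v_1$ is also pendent. Suppose such a $w$ had another neighbour $x\neq v_1$. Then $x\notin V(P)$, since $x\in V(P)$ would give a cycle in the tree. Hence $x w v_1 v_2\cdots v_k$ is a path of length $k+1$, contradicting maximality.

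Now $v_1$ is adjacent to $v_0$ and $v_2$, so $\deg v_1\geq 2$. Since $T$ has no degree-two vertex, $\deg v_1\geq 3$. Therefore $v_1$ has a neighbour $w\notin\{v_0,v_2\}$, and by the key step $w$ is pendent. Then $N(v_0)=N(w)=\{v_1\}$, and in the unweighted setting the edges $\{v_0,v_1\}$ and $\{w,v_1\}$ have the same weight. So $v_0$ and $w$ are (false) twins in the sense defined before Corollary~\ref{twin}.

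There is no real obstacle here. The only points needing care are the cycle-freeness argument justifying that neighbours of $v_1$ off $P$ are pendent (this is where the tree hypothesis enters), and the small cases. In the star case $k=2$ the argument still works, with $v_2$ itself a leaf. The $K_2$ case should be mentioned explicitly so that the statement is valid for all $n\geq2$.
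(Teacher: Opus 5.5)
Your proof is correct. It starts from the same observation as the paper: a vertex adjacent to a leaf cannot have degree $2$, so it has degree at least $3$. It then supplies the step the paper's one-line proof skips.

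The paper takes an \emph{arbitrary} vertex $u$ adjacent to a pendent vertex $v$, notes $\operatorname{deg}u\geq 3$, and concludes that $v$ has a twin. That inference fails for an arbitrary choice. For example, attach two pendent vertices to each of two leaves of $K_{1,3}$. The resulting tree has no degree-two vertex, and its centre $u$ has degree $3$ with exactly one pendent neighbour $v$. This $v$ has no twin; the twins are the pendent vertices further out.

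Your choice of $v_1$ as the second vertex of a longest path fixes this. It forces every neighbour of $v_1$ other than $v_2$ to be pendent, so $\operatorname{deg}v_1\geq 3$ gives at least two pendent neighbours, and these are false twins.

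You also handle the degenerate cases correctly. $K_1$ is a genuine exception to the statement as written. $K_2$ does have (true) twins, but there the paper's claim $\operatorname{deg}u\geq 3$ is false. Neither case affects the paper's later uses, which assume $n\geq 5$.

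One implicit step in your key argument should be stated: when $w\neq v_0$, you also need $w\notin V(P)$ for $x\,w\,v_1\cdots v_k$ to be a path. This follows from acyclicity, since an edge $v_1v_j$ with $j\geq 3$ would close a cycle.
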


\begin{proof}
If $u$ is a vertex in $T$ adjacent to a pendent vertex $v$, then $\operatorname{deg}u\geq 3$ because $X$ has no degree two vertex. Consequently, $v$ must have a twin in $T$.
\end{proof}

\begin{theorem}
\label{tree1}
Let $T$ be an unweighted tree on $n\geq 5$ vertices, where $n$ is even. If $T$ admits $\epsilon$-uniform mixing, then there is an odd number of vertices in $T$ with degree $d\equiv 2,3$ (mod 4).
\end{theorem}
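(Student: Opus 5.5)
The plan is to apply the parity criterion of Corollary~\ref{nodd} at every vertex of $T$, and then to rule out the one remaining case with the twin obstruction of Corollary~\ref{twin}. Since $T$ admits $\epsilon$-uniform mixing, local $\epsilon$-uniform mixing occurs at every vertex $u$. Also, $T$ is an unweighted bipartite graph, so Corollary~\ref{nodd} applies at each $u\in V(T)$. It says that there is an even number of vertices of degree $d\equiv 2,3 \pmod 4$ if and only if $|E(T)|$ and $\frac{1}{2}n\operatorname{deg}u$ have the same parity.

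I will argue by contradiction and suppose the number of vertices of degree $d\equiv 2,3 \pmod 4$ is even. Since $T$ is a tree, $|E(T)|=n-1$, and this is odd because $n$ is even. Corollary~\ref{nodd} then forces $\frac{1}{2}n\operatorname{deg}u$ to be odd for every vertex $u$. Here $\frac{n}{2}$ is an integer, so the product is odd only when both factors are odd. Hence $\frac{n}{2}$ is odd and every vertex of $T$ has odd degree. In particular, $T$ has no vertex of degree two.

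Now Proposition~\ref{tree2} applies and gives a pair of twin vertices in $T$; concretely, two pendent vertices share a neighbor of degree at least $3$. Local $\epsilon$-uniform mixing occurs at such a twin vertex $u$, so Corollary~\ref{twin} gives $n\leq 4$. This contradicts $n\geq 5$, and therefore the number of vertices of degree $d\equiv 2,3 \pmod 4$ must be odd.

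The only step that needs care is the parity deduction: it is the failure of $\frac{1}{2}n\operatorname{deg}u$ to be even at some vertex that is used to conclude that every degree is odd. Everything else is a direct invocation of the earlier results.
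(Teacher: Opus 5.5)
Your proof is correct and is essentially the paper's argument. You apply Corollary~\ref{nodd} at every vertex and use that $|E(T)|=n-1$ is odd to force $\frac{1}{2}n\operatorname{deg}u$ to be odd at every $u$, so all degrees are odd and there is no degree-two vertex; then Proposition~\ref{tree2} with Corollary~\ref{twin} contradicts $n\geq 5$. The only differences are cosmetic. You set it up as a contradiction from the start. Your closing remark should say that the oddness of $\frac{1}{2}n\operatorname{deg}u$ at \emph{every} vertex, not at some vertex, is what forces all degrees to be odd, which is what your main argument in fact uses.
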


\begin{proof}
Since $n$ is even, $|E(T)|=n-1$ is odd. As $\epsilon$-uniform mixing occurs in $T$, Corollary~\ref{nodd} applies to each $u\in V(T)$. That is, there is an even number of vertices in $T$ with degree $d\equiv 2,3$ (mod 4) if and only if $\frac{1}{2}n\operatorname{deg}u$ is odd for each $u\in V(T)$. The latter statement is equivalent to each vertex in $T$ has odd degree and $\frac{n}{2}$ is odd. If each vertex in $T$ has odd degree, then $T$ has no degree two vertex. Applying Proposition~\ref{tree2} and Corollary~\ref{twin}, $\epsilon$-uniform mixing does not occur in $T$, a contradiction. Thus, it must be that some vertex in $X$ has even degree, and so there is an odd number of vertices in $T$ with degree $d\equiv 2,3$ (mod 4).
\end{proof}

A \textit{caterpillar} is a tree whose deletion of all pendent vertices results in a path.

\begin{corollary}
\label{27}
Let $T$ be an unweighted caterpillar on an even number $n\geq 5$ of vertices. Then $T$ does not admit $\epsilon$-uniform mixing if $T$ has twins, or if $T$ has no twins but has an even number of pendent vertices.
\end{corollary}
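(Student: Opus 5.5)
The statement splits into two cases, and I will handle them separately. Throughout, $T$ is an unweighted caterpillar on an even number $n\geq 5$ of vertices.

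\emph{Case 1: $T$ has twins.} Suppose $T$ contains a vertex $u$ with a twin. Corollary~\ref{twin} then says that local $\epsilon$-uniform mixing at $u$ forces $n\leq 4$. This contradicts $n\geq 5$. Since $\epsilon$-uniform mixing requires local $\epsilon$-uniform mixing at every vertex, $T$ does not admit $\epsilon$-uniform mixing. This case needs only that $n\geq 5$, not the parity of $n$.

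\emph{Case 2: $T$ has no twins.} First I pin down the structure. Let $P=v_1v_2\cdots v_k$ be the spine obtained by deleting all pendent vertices, and assume $n\geq 5$. Two pendent vertices attached to the same spine vertex would be twins. Hence every spine vertex carries at most one pendent neighbour. The endpoints $v_1,v_k$ of the spine must carry exactly one, since otherwise they would be pendent in $T$. The interior spine vertices therefore have degree $2$ or $3$, while $v_1$ and $v_k$ have degree $2$. When $k=1$ the tree is a star, which has twins once $n\geq 3$, so we may assume $k\geq 2$. Let $p$ be the number of pendent vertices, so $n=k+p$. The degrees are:
\begin{itemize}
\item the $p$ pendent vertices, of degree $1$;
\item the $p$ spine vertices carrying a pendent neighbour, of degree $2$ (the two ends) or $3$ (interior);
\item the remaining $k-p$ interior spine vertices, of degree $2$.
\end{itemize}
Every vertex of degree $\equiv 2,3 \pmod 4$ is thus a spine vertex, and every spine vertex has degree $2$ or $3$. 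So the number of vertices with degree $\equiv 2,3\pmod 4$ equals $k=n-p$.

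With $n$ even and $p$ even, $k=n-p$ is even. Theorem~\ref{tree1} says that if $T$ admits $\epsilon$-uniform mixing, this count must be odd. This is a contradiction, so $T$ does not admit $\epsilon$-uniform mixing. The main step is the structural description of a twin-free caterpillar. The point needing care there is that each endpoint of the spine has exactly one pendent neighbour, which is what makes the count equal to $k$ exactly. Small degenerate spines ($k\leq 2$) need a quick separate check, and the inequality $n\geq 5$ rules out the star cases.
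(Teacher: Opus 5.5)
Your proposal is correct and follows the paper's proof. Corollary~\ref{twin} disposes of the twin case; in the twin-free case every vertex has degree in $\{1,2,3\}$, so the vertices of degree $\equiv 2,3 \pmod 4$ are exactly the $n-p$ non-pendent ones, an even number, which contradicts Theorem~\ref{tree1}. Your spine analysis just spells out the paper's one-line degree claim, and the endpoint subtlety you flag is not actually needed, since a spine vertex has at most two spine neighbours and at most one pendent neighbour and is not pendent, so its degree is automatically $2$ or $3$.
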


\begin{proof}
If $T$ has twins, then Corollary~\ref{twin} yields the result. If $T$ has no twins, then each vertex in $T$ has degree one, two or three. Since $n$ is even and there is an even number of pendent vertices, applying Theorem~\ref{tree1} yields the conclusion.
\end{proof}

Given a tree $T$, we let $X(T)$ denote the tree obtained from $T$ by attaching a pendent vertex to each vertex of $T$. Note that $X(T)$ has twice the number of vertices of $T$. 

\begin{theorem}
\label{X(T)}
Let $T$ be an unweighted tree on $n$ vertices such that $\operatorname{deg}u\equiv 1,2$ (mod 4) for each vertex $u$ of $T$. For all $n\geq 2$, then $X(T)$ does not admit $\epsilon$-uniform mixing.
\end{theorem}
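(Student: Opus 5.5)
The plan is to apply Corollary~\ref{nodd} to $X(T)$ at a single pendent vertex. The parity bookkeeping will then force a contradiction whatever the parity of $n$. This shows that a pendent vertex of $X(T)$ cannot even admit local $\epsilon$-uniform mixing, which is stronger than the stated conclusion.

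First I record the relevant data of $X(T)$. It is an unweighted tree, hence bipartite, on $N=2n$ vertices. It has $|E(X(T))|=2n-1$ edges, which is odd. Its degrees are as follows: each original vertex $u$ of $T$ has degree $\operatorname{deg}_T u+1$ in $X(T)$, and each of the $n$ attached vertices has degree $1$. By the hypothesis $\operatorname{deg}_T u\equiv 1,2 \pmod 4$, every original vertex has degree $\equiv 2,3 \pmod 4$ in $X(T)$, while the attached pendent vertices have degree $1\not\equiv 2,3$. Hence the number of vertices of $X(T)$ with degree $\equiv 2,3 \pmod 4$ is exactly $n$. (Note $n\geq 2$ guarantees $T$ has no isolated vertex, so the hypothesis makes sense.)

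Next, suppose $X(T)$ admits $\epsilon$-uniform mixing. Then local $\epsilon$-uniform mixing occurs at every vertex, in particular at a pendent vertex $w$, where $\operatorname{deg}w=1$. By Corollary~\ref{nodd}, the number $n$ of vertices with degree $\equiv 2,3 \pmod 4$ is even if and only if $|E(X(T))|=2n-1$ and $\frac12 N\operatorname{deg}w=n$ have the same parity. Since $2n-1$ is odd, this says that $n$ is even if and only if $n$ is odd, which is impossible. Hence $\epsilon$-uniform mixing cannot occur. (Alternatively, for even $n$ one may cite Theorem~\ref{tree1} directly when $2n\geq 5$, but the pendent-vertex argument handles all $n\geq 2$ uniformly, including $X(K_2)=P_4$.)

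The only step requiring care is the degree count. I need to verify that the pendent vertices contribute nothing to the count of degrees $\equiv 2,3\pmod 4$ and that every original vertex contributes exactly one. After that, the argument is a direct substitution into Corollary~\ref{nodd}, so I expect no real obstacle.
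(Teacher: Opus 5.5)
Your proof is correct. It takes a genuinely different, more uniform route than the paper's.

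The paper splits on the parity of $n$:
\begin{itemize}
\item For even $n$, it notes that exactly $n$ vertices of $X(T)$ have degree $\equiv 2,3 \pmod 4$ and invokes the contrapositive of Theorem~\ref{tree1}.
\item For odd $n$, it notes that $|V(X(T))|=2n\not\equiv 0 \pmod 4$ and invokes Theorem~\ref{thm:eumbip}(1).
\end{itemize}
You instead apply Corollary~\ref{nodd} once, at an attached pendent vertex $w$. There $\frac{1}{2}(2n)\operatorname{deg}w=n$ and $|E(X(T))|=2n-1$ turn the biconditional into ``$n$ is even if and only if $n$ is odd.'' In the even case this is essentially the parity computation inside the proof of Theorem~\ref{tree1}, specialized to a pendent vertex.

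What your version buys:
\begin{itemize}
\item One argument covers both parities.
\item The conclusion is stronger and local: no attached pendent vertex of $X(T)$ admits local $\epsilon$-uniform mixing.
\item It covers $n=2$ cleanly. There $X(K_2)=P_4$ has only four vertices, so Theorem~\ref{tree1}, which is stated for at least five vertices, does not formally apply.
\item It avoids Theorem~\ref{thm:eumbip}(1), whose proof the paper only sketches by reference to Godsil--Mullin--Roy.
\end{itemize}

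What the paper's route buys: its odd case uses neither the degree hypothesis nor unit edge weights. It therefore rules out $\epsilon$-uniform mixing in $X(T)$ for every tree $T$ with $n$ odd and every weighting. Your argument needs $\operatorname{deg}_T u\equiv 1,2 \pmod 4$ in both cases, so that the count of vertices of degree $\equiv 2,3 \pmod 4$ is exactly $n$. Without that hypothesis, your computation gives only a parity constraint: a pendent vertex of $X(T)$ can admit local $\epsilon$-uniform mixing only if an odd number of vertices of $T$ have degree $\equiv 0,3 \pmod 4$.
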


\begin{proof}
First, suppose $n$ is even. Note that each vertex $u$ of $T$ has degree $\operatorname{deg}u\equiv 2,3$ (mod 4) in $X(T)$ and there are exactly $n$ of them. By the contrapositive of Theorem~\ref{tree1}, we get the desired result. Now, suppose $n$ is odd. Then $X(T)$ has $2n\not\equiv 0$ (mod 4) vertices, and so conclusion follows from Theorem~\ref{thm:eumbip}(1).
\end{proof}

\begin{corollary}
\label{path}
For all $n\geq 3$, the unweighted path $P_n$ does not admit $\epsilon$-uniform mixing. Moreover, for all $n\geq 4$, $X(P_n)$ does not admit $\epsilon$-uniform mixing.
\end{corollary}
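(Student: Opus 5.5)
We need to prove two things: $P_n$ ($n\ge 3$) does not admit $\epsilon$-uniform mixing, and $X(P_n)$ ($n\ge 4$) does not either. The plan is to dispose of $P_n$ by parity of $n$, and then read off the $X(P_n)$ case directly from Theorem~\ref{X(T)}.

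\textbf{The path $P_n$, $n$ odd.} Here $P_n$ is a bipartite graph (indeed a tree) on an odd number of vertices with pendent endpoints. Corollary~\ref{treeuni} then says that local $\epsilon$-uniform mixing fails at an endpoint, so $\epsilon$-uniform mixing fails. This covers $n=3$ and all odd $n\ge 5$.

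\textbf{The path $P_n$, $n$ even.} Theorem~\ref{thm:eumbip}(1) requires $n\equiv 0 \pmod 4$, so $n\equiv 2\pmod 4$ is excluded immediately. For $n\equiv 0\pmod 4$ with $n\ge 8$, we use Theorem~\ref{tree1}, which applies since $n\ge 5$ is even. In $P_n$ the vertices of degree $\equiv 2,3 \pmod 4$ are exactly the $n-2$ internal vertices of degree $2$. Since $n-2$ is even, Theorem~\ref{tree1} forbids $\epsilon$-uniform mixing. In fact this argument handles every even $n\ge 6$, and Theorem~\ref{thm:eumbip}(1) is only needed as a shortcut.

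\textbf{The remaining case $n=4$.} This is the main obstacle: $4\equiv 0\pmod 4$, and Theorem~\ref{tree1} needs $n\ge 5$. I would first try Theorem~\ref{lum} at an endpoint $u$ of $P_4$, using an eigenvector with $v_u$ as large as possible relative to $\sum_j |v_j|$. The adjacency eigenvectors of $P_4$ are $v_j=\sin(jk\pi/5)$, for example $(\sin\frac{\pi}{5},\sin\frac{2\pi}{5},\sin\frac{3\pi}{5},\sin\frac{4\pi}{5})$. I expect these to satisfy $2|v_u|\le\sum|v_j|$, so this is probably inconclusive.

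The fallback is Proposition~\ref{prop7}. $P_4$ is nonsingular with eigenvalues $\pm\phi,\pm\phi^{-1}$, where $\phi$ is the golden ratio. Write the target state at an endpoint as $[\bmu_1,\ii\bmu_2]^T$ with $\pm1$ entries. Proposition~\ref{prop7}(1) gives
$$\cos(\lambda\tau')\,2v_u=\bv_1^T\bmu_1,\qquad \sin(\lambda\tau')\,2v_u=\bv_2^T\bmu_2$$
for both $\lambda=\phi$ and $\lambda=\phi^{-1}$. Squaring and adding yields
$$4v_u^2=(\bv_1^T\bmu_1)^2+(\bv_2^T\bmu_2)^2,$$
and there are only finitely many sign choices to check against this identity. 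Alternatively, one can check equation (\ref{Eq:cos}) together with (\ref{Eq:cos1}) over the finitely many phase patterns in $\{\pm1,\pm\ii\}$. I expect this finite check to rule out an endpoint, which finishes $P_4$.

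\textbf{The graph $X(P_n)$.} Every vertex of $P_n$ has degree $1$ or $2$, both $\equiv 1,2\pmod 4$. So Theorem~\ref{X(T)} applies directly for all $n\ge 2$, in particular for $n\ge 4$.
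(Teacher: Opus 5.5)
Your proposal is correct. Outside the case $n=4$ it is the paper's proof. The paper handles odd $n$ by Corollary~\ref{treeuni} and even $n$ by Corollary~\ref{27}; for a path, the latter is exactly your application of Theorem~\ref{tree1}, since the $n-2$ internal vertices are an even number of degree-$2$ vertices. It handles $X(P_n)$ by Theorem~\ref{X(T)}.

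Where you go beyond the paper is $P_4$. You are right that Theorem~\ref{tree1} and Corollary~\ref{27} assume $n\ge 5$. The paper's proof cites Corollary~\ref{27} for every even $n$, so it silently skips this case.

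You left the final check as an expectation, but it succeeds with a single eigenvector. Label $P_4$ as $1,2,3,4$ and take $u=1$, $B_1=\{1,3\}$, $B_2=\{2,4\}$. Take $\lambda=\phi$ and $\bv=(s_1,s_2,s_2,s_1)$ with $s_1=\sin\frac{\pi}{5}$, $s_2=\sin\frac{2\pi}{5}$. Since $s_1^2+s_2^2=\frac{5}{4}$ and $s_1s_2=\frac{\sqrt{5}}{4}$, your identity becomes
\[
\tfrac{5-\sqrt{5}}{2}=4s_1^2=(s_1\pm s_2)^2+(s_2\pm s_1)^2\in\left\{\tfrac{5-2\sqrt{5}}{2},\tfrac{5}{2},\tfrac{5+2\sqrt{5}}{2}\right\},
\]
where the two signs are independent. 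No choice of signs gives the left side, so this is impossible.

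Your alternative via (\ref{Eq:cos1}) is even quicker and needs no enumeration. The only pairs of vertices of $P_4$ with a common neighbour are $\{1,3\}$ and $\{2,4\}$, and each lies inside one part. By Proposition~\ref{1}(1), equation (\ref{Eq:cos1}) then reads $3\pm1\pm1=2\operatorname{deg}u$, so an odd number equals an even one. Hence no vertex of $P_4$ admits local $\epsilon$-uniform mixing.

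This is just the parity step in the proof of Theorem~\ref{tree1}. Its hypothesis $n\ge 5$ is needed only to exclude, via Corollary~\ref{twin}, trees all of whose degrees are odd. A path on at least three vertices never has this property, so you could also have noted that the proof of Theorem~\ref{tree1} goes through verbatim for $P_4$.
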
 

\begin{proof}
If $n$ is even, then $P_n$ does not admit $\epsilon$-uniform mixing by Corollary~\ref{27}. The odd case follows from Corollary~\ref{treeuni}. This proves the first statement. The second is immediate from Theorem~\ref{X(T)}.
\end{proof}

Next, we provide families of trees with no twin vertices that do not admit $\epsilon$-uniform mixing. The result below follows from Corollary~\ref{cor:subd} using the fact that a tree is a $0$-cyclic graph.

\begin{theorem}
\label{thm:subdtree}
If $T$ is a weighted tree, then $S(T)$ does not admit $\epsilon$-uniform mixing.
\end{theorem}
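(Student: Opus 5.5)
The plan is to reduce the statement to Corollary~\ref{cor:subd} with $k=0$, which rests on Theorem~\ref{thm:subd}, that is, on the parity condition of Theorem~\ref{thm:eumbip}(1). Let $T$ have $n$ vertices, so $|E(T)|=n-1$, and $S(T)$ has $n+(n-1)=2n-1$ vertices. This number is odd, hence $2n-1\not\equiv 0 \pmod 4$. The subdivision $S(T)$ is bipartite, with one part the original vertices and the other the subdivision vertices. It is also connected, since $T$ is connected.

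Next I apply Theorem~\ref{thm:eumbip}(1). If $\epsilon$-uniform mixing occurred in $S(T)$ for some assignment of positive edge weights, then the number of vertices of $S(T)$ would be divisible by $4$. Since $2n-1$ is odd, this is impossible. So $S(T)$ does not admit $\epsilon$-uniform mixing for any weighting, and the weights on $T$ play no role. Equivalently, one can quote Corollary~\ref{cor:subd} with $k=0$ even, or Theorem~\ref{thm:subd} directly.

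The one point needing care is that Theorem~\ref{thm:eumbip}(1) must hold for arbitrarily weighted bipartite graphs. Its proof goes through Proposition~\ref{1}(2) and the form (\ref{Eq:bip}), which only use bipartiteness, so weights are harmless. If I wanted to avoid leaning on the ``mod 4'' argument borrowed from \cite{Godsil2013}, odd order alone already suffices, and I would argue it as follows. By Proposition~\ref{Lem:compHad}, $\epsilon$-uniform mixing yields a complex Hadamard limit $\mathcal{H}=\lim\sqrt{N}U(\tau_m)$, where $N=2n-1$. By (\ref{Eq:bip}), $\mathcal{H}$ has real $\pm1$ diagonal blocks and purely imaginary $\pm\ii$ off-diagonal blocks. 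Pick two columns indexed by vertices in the same part. Their Hermitian inner product is a sum of $N$ terms, each equal to $\pm1$. An odd number of $\pm1$ terms cannot sum to zero, which contradicts the orthogonality of distinct columns of $\mathcal{H}$. Each part of $S(T)$ is nonempty and has two vertices once $n\ge 2$, so such a pair of columns exists. The trivial case $n=1$ (where $S(T)=K_1$) is excluded implicitly, as elsewhere in the paper.
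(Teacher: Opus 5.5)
Your proposal is correct and follows the paper's own route: $S(T)$ has $2n-1$ vertices, which is odd, so Corollary~\ref{cor:subd} with $k=0$ (that is, Theorem~\ref{thm:eumbip}(1)) rules out $\epsilon$-uniform mixing for every weighting. Your optional parity argument via the limiting $\{\pm1,\pm\ii\}$ Hadamard matrix is also valid, and you are right that the case $n=1$ must be excluded, which the paper leaves implicit; in fact any two distinct columns work, since for columns in different parts the inner product is $\ii$ times a sum of $2n-1$ signs, so you need not claim that each part has two vertices (the subdivision part has only $n-1$ vertices, just one when $n=2$).
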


We end with the following result.

\begin{corollary}
\label{k13}
$K_{1,3}$ is the only unweighted tree with no degree-two vertex that admits $\epsilon$-uniform mixing.
\end{corollary}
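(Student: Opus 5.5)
Two things must be shown: that $K_{1,3}$ admits $\epsilon$-uniform mixing relative to $A$, and that no other unweighted tree without degree-two vertices does.

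For the existence part, I would use the known fact that $K_{1,3}$ admits uniform mixing relative to $A$. Godsil and Zhan \cite{Godsil2015UniformMO} treat Cartesian powers of the star on four vertices, whose first power is $K_{1,3}$. To keep the argument self-contained, I would check it directly. The adjacency eigenvalues are $\pm\sqrt3$ and $0$. With center $c$,
\[
U(t)_{c,c}=\cos(\sqrt3 t),\qquad U(t)_{c,j}=\tfrac{\ii}{\sqrt3}\sin(\sqrt3 t).
\]
For two distinct leaves $j\neq\ell$, $U(t)_{j,\ell}=\tfrac13(\cos(\sqrt3t)-1)$, and each leaf diagonal entry is $\tfrac13(\cos(\sqrt3t)+2)$. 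Choosing $\cos(\sqrt3 t)=-\tfrac12$ gives moduli $\tfrac12$ for every entry: $\tfrac{1}{\sqrt3}\cdot\tfrac{\sqrt3}{2}$, $\tfrac13\cdot\tfrac32$, and $\tfrac13\cdot\tfrac32$ respectively. Hence uniform mixing occurs, and therefore $\epsilon$-uniform mixing.

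For uniqueness, let $T$ be an unweighted tree with no degree-two vertex that admits $\epsilon$-uniform mixing. Then local $\epsilon$-uniform mixing holds at every vertex.
\begin{itemize}
\item \emph{Small trees.} If $n\le 2$, then $T$ is $K_1$ or $K_2$; I would note these separately (the statement presumably regards $K_2$ as trivial or excluded, a detail to check against the intended convention). The case $n=3$ forces $P_3$, which has a degree-two vertex. For $n=4$, the only trees are $P_4$ and $K_{1,3}$, and $P_4$ has degree-two vertices.
\item \emph{Large trees.} Suppose $n\ge5$. By Proposition~\ref{tree2}, $T$ contains a vertex with a twin. Corollary~\ref{twin} then says local $\epsilon$-uniform mixing fails at that vertex, since $n>4$. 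This contradicts $\epsilon$-uniform mixing.
\end{itemize}
So $T=K_{1,3}$, apart from the $K_1$/$K_2$ caveat.

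The main obstacle is bookkeeping rather than depth. First, the verification for $K_{1,3}$ needs the explicit spectral decomposition, which can be replaced by citing \cite{Godsil2015UniformMO}. Second, the statement's phrasing needs care regarding the degenerate trees $K_1$ and $K_2$. In particular, $K_2$ has uniform mixing at $t=\pi/4$, so I would say explicitly that trees on at most two vertices are excluded (or that $n\ge3$ is intended).
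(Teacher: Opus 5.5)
Your proposal is correct and is essentially the paper's proof: you settle $n\le 4$ by inspection, and for $n\ge 5$ you combine Proposition~\ref{tree2} with Corollary~\ref{twin} (the twins exist because the neighbor of an end of a longest path has degree at least three, and all but one of its neighbors are leaves). Your explicit check that $K_{1,3}$ has uniform mixing when $\cos(\sqrt{3}t)=-\frac{1}{2}$ supplies what the paper only asserts, and your caveat is justified: $K_2$ (uniform mixing at $t=\pi/4$) and, trivially, $K_1$ are also trees with no degree-two vertex, so both the statement and the paper's claim about trees on $n\le 4$ vertices need the restriction $n\ge 3$.
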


\begin{proof} 
$K_{1,3}$ is the only unweighted tree on $n\leq 4$ that admits $\epsilon$-uniform mixing. Now, if $n\geq 5$ and $T$ has no degree-two vertex, then applying Proposition~\ref{tree2} and Corollary~\ref{twin} yields no $\epsilon$-uniform mixing in $T$.
\end{proof}

\section{Open questions}
\label{sec:oq}

In this work, we derived necessary conditions for local $\epsilon$-uniform mixing and used them to rule out its existence in numerous classes of graphs, most of which are non-regular. We showed that almost all planar graphs and almost all trees  contain a vertex that does not admit local $\epsilon$-uniform mixing. This rarity result for motivates us to search for graphs that admit local $\epsilon$-uniform mixing, especially non-regular ones. 

If a vertex admits local $\epsilon$-uniform mixing, is the target state $\bmu$ unique, or is it possible to obtain two linearly independent target states at different times? If $\bmu$ has $\pm 1$ entries, then $\bmu$ will be a unique target state by the monogamy of perfect state transfer between real pure states. 

Finally, which graph operations preserve the local $\epsilon$-uniform mixing property of the underlying graphs? Is it possible to build graphs with local $\epsilon$-uniform mixing from those that do not possess this property?

\section*{Acknowledgements}
The author is supported in part by the Pacific Institute for the Mathematical Sciences through the PIMS-Simons postdoctoral fellowship.

\bibliographystyle{alpha}
\bibliography{mybibfile}
\end{document}